\newcommand{\abs}[1]{\left| #1 \right|}
\newcommand{\set}[1]{\left\{ #1 \right\}}
\newcommand{\inner}[2]{\left\langle #1,\, #2 \right\rangle}
\newcommand{\norm}[1]{\left\| #1 \right\|}
\DeclareMathOperator*{\argmax}{arg\,max}
\DeclareMathOperator*{\argmin}{arg\,min}
\newcommand{\Prob}[1]{{\mathbb{P}\left[{#1}\right]}}
\newcommand{\ProbGiven}[2]{{\mathbb{P}\left[{#1}\ \middle|\ #2\right]}}
\newcommand{\E}[1]{{\mathbb{E}\left[{#1}\right]}}
\newcommand{\expo}[1]{\exp\left( #1 \right)}
\newcommand{\parens}[1]{\left( #1 \right)}
\newcommand{\mathand}[0]{\quad\textrm{and}\quad}
\newcommand{\spn}[1]{\textrm{Span}\parens{#1}}
\newcommand{\R}[0]{\mathbb{R}}
\newcommand{\A}[0]{\mathcal{A}}
\newcommand{\Pcal}[0]{\mathcal{P}}
\newcommand{\G}[0]{\mathcal{G}}
\newcommand{\X}[0]{\mathcal{X}}
\newcommand{\Q}[0]{\mathcal{Q}}
\newcommand{\indicator}[1]{\mathbbm{1}_{#1}}
\newcommand{\normalized}[1]{\frac{#1}{\norm{#1}}}
\newcommand{\Ocal}[0]{\mathcal{O}}
\newcommand{\Ograd}[0]{\mathcal{O}_{\textrm{grad}}}
\newcommand{\Oprox}[0]{\mathcal{O}_{\textrm{prox}}}
\newcommand{\FLHB}[0]{\mathcal{F}_{L,H,B}}
\newcommand{\FLHBs}[0]{\mathcal{F}_{L,H,B,\sigma}}
\newcommand{\prox}[0]{\textrm{prox}}
\newcommand{\removed}[1]{}
\newcommand{\collapse}[1]{\dots}
\newcommand{\blockcomment}[1]{}
\newtheorem{theorem}{Theorem}
\newtheorem{lemma}{Lemma}
\definecolor{darkgreen}{rgb}{0,0.4,0.0}
\newcommand{\mcm}[1]{}
\newcommand{\natinote}[1]{}
\newcommand{\machines}{M}      
\newcommand{\K}{K}  
\newcommand{\ambsgd}{A-MB-SGD\xspace}
  \tikzstyle{vertex}=[circle, draw, inner sep=0pt, minimum size=6pt]
  \newcommand{\vertex}{\node[vertex]}
\title{\rule{\textwidth}{2pt}\\{\LARGE\textbf{Graph Oracle Models, Lower Bounds, and Gaps for \\ Parallel Stochastic Optimization}}\\\vspace{-3mm}\rule{\textwidth}{1pt}}
\author{
\begin{minipage}[c]{0.32\textwidth}
\vspace{-3mm}
\centering\normalsize
\textbf{Blake Woodworth}\\
Toyota Technological \\
Institute at Chicago \\ 
\texttt{blake@ttic.edu}
\end{minipage} %
\begin{minipage}[c]{0.32\textwidth}
\vspace{-3mm}
\centering\normalsize
\textbf{Jialei Wang}\\
University of Chicago \\ 
\texttt{jialei@uchicago.edu}
\end{minipage} %
\begin{minipage}[c]{0.32\textwidth}
\vspace{-3mm}
\centering\normalsize
\textbf{Adam Smith}\\
Boston University \\ 
\texttt{ads22@bu.edu}
\end{minipage}
\\
\begin{minipage}[c]{0.5\textwidth}
\vspace{3mm}
\centering\normalsize
\textbf{Brendan McMahan}\\
Google \\ 
\texttt{mcmahan@google.com}
\end{minipage} %
\begin{minipage}[c]{0.5\textwidth}
\vspace{3mm}
\centering\normalsize
\textbf{Nathan Srebro}\\
Toyota Technological\\
Institute at Chicago
\footnote{Part of this work was completed while visiting Google} \\ 
\texttt{nati@ttic.edu}
\end{minipage}
}
\date{\vspace{-10mm}}
\begin{document}
\maketitle

\begin{abstract}
  We suggest a general oracle-based framework that captures different parallel
  stochastic optimization settings
  described by a dependency graph, and derive generic lower bounds 
  in terms of this graph.  We then use the framework and derive lower
  bounds for several specific parallel optimization settings,
  including delayed updates and parallel processing with intermittent
  communication.  We highlight gaps between lower and upper bounds on
  the oracle complexity, and cases where the ``natural'' algorithms
  are not known to be optimal.
\end{abstract}

\section{Introduction}
Recently, there has been great interest in stochastic optimization and learning
algorithms that leverage 
parallelism, including e.g.~delayed updates arising from
pipelining and asynchronous concurrent processing, 
synchronous single-instruction-multiple-data
parallelism, and parallelism across distant devices. With the abundance of parallelization 
settings and associated algorithms, it is important 
to precisely formulate the problem, which allows us to ask questions
such as ``is there a better method for this problem than what we have?'' and ``what is the best
we could possibly expect?''

Oracle models have long been a useful framework for formalizing stochastic optimization 
and learning problems. In an oracle model, we place limits on the algorithm's
 access to the optimization objective, but not what it
may do with the information it receives. This allows us to obtain sharp lower bounds, which
can be used to argue that an algorithm is optimal and to identify
 gaps between current algorithms and what might be possible.
Finding such gaps can be very useful---for example, the gap between 
the first order optimization lower bound of \citet{nemirovskii1983problem}
and the best known algorithms at the time inspired Nesterov's accelerated
gradient descent algorithm \cite{nesterov1983method}.

We propose an oracle framework for formalizing 
different parallel optimization problems. We specify the structure of 
parallel computation using an ``oracle graph'' which indicates how an
algorithm accesses the oracle. Each node in the graph corresponds to 
a single stochastic oracle query, and that query (e.g.~the point at 
which a gradient is calculated) must be computed using only oracle accesses in 
ancestors of the node. We generally think of each
stochastic oracle access as being based on a single data sample,
thus involving one or maybe a small number of vector operations. 

In Section \ref{sec:graphLBs} we devise generic lower bounds for
parallel optimization problems in terms of simple properties of the associated
oracle graph, namely the length of the longest dependency chain and the total number of
nodes. In Section \ref{sec:examples}
we study specific parallel optimization settings in which 
many algorithms have been proposed, formulate them as graph-based oracle
parallel optimization problems, instantiate our lower bounds, 
and compare them with the performance guarantees of specific algorithms.
We highlight gaps between the lower
bound and the best known upper bound and also situations where we can
devise an optimal algorithm that matches the lower bound, but where 
this is not the ``natural'' and typical algorithm used in this settings. The latter 
indicates either a gap in our understanding of the ``natural'' algorithm or a need to
depart from it.

\paragraph{Previously suggested models} 
Previous work studied communication lower bounds for parallel convex optimization where
there are $\machines$ machines each containing
a local function (e.g.~a collection of samples from a distribution). Each
machine can perform computation on its own function, and then 
periodically every machine is allowed to transmit information to 
the others. In order to prove meaningful lower bounds based on the number of rounds 
of communication, it is necessary to prevent the machines from simply transmitting
their local function to a central machine, or else any objective could be optimized
in one round. There are two established ways of doing this. First,
one can allow arbitrary computation on the local machines, but restrict the number
of bits that can be transmitted in each round. There is work focusing on specific 
statistical estimation problems that establishes communication lower 
bounds via information-theoretic arguments \citep{zhang2013information,garg2014communication,braverman2016communication}. Alternatively, one can allow the machines to communicate 
real-valued vectors, but restrict the types of computation they are allowed to perform.
For instance, \citet{arjevani2015communication} present communication complexity lower bounds for
algorithms which can only compute vectors that lie in a certain subspace, which includes
e.g.~linear combinations of gradients of their local function. \citet{lee2017distributed} 
assume a similar restriction, but allow the data defining the local functions
to be allocated to the different machines in a strategic manner.
Our framework applies to general stochastic optimization problems and does not impose any restrictions on what computation the
algorithm may perform, and is thus a more direct generalization of the
oracle model of optimization. 

Recently, \citet{duchi18a} considered first-order optimization
in a special case of our
proposed model (the ``simple parallelism'' graph of
Section~\ref{sec:layered}), but their bounds apply in a more
limited parameter regime, see Section~\ref{sec:graphLBs} for
discussion. 


\vspace{-0.2 cm}
\section{The graph-based oracle model}
\label{sec:oracle}
\vspace{-0.2 cm}

We consider the following stochastic optimization problem
\begin{equation}\label{eq:mainproblem}
\min_{x\in\R^m:\norm{x}\leq B} F(x) := \mathbb{E}_{z\sim\Pcal}\left[ f(x;z) \right]
\end{equation}
The problem \eqref{eq:mainproblem} captures many important tasks, such as supervised learning, in which case $f(x;z)$ is the loss of a model parametrized by $x$ on data instance $z$ and the goal is to minimize the population risk $\E{f(x;z)}$. We assume that $f(\cdot;z)$ is convex, $L$-Lipschitz, and $H$-smooth for all $z$. We also allow $f$ to be non-smooth, which corresponds to $H=\infty$. A function $g$ is $L$-Lipschitz when $\norm{g(x)-g(y)} \leq L\norm{x-y}$ for all $x,y$, and it is $H$-smooth when it is differentiable and its gradient is $H$-Lipschitz. 
We consider optimization algorithms that use either a stochastic gradient or stochastic prox oracle ($\Ograd$ and $\Oprox$ respectively):
\begin{align}
\Ograd(x,z) &= \parens{f(x;z),\ \nabla f(x;z)} \label{eq:gradoracle}\\
\Oprox(x,\beta,z) &= \parens{f(x;z),\ \nabla f(x;z),\ \prox_{f(\cdot;z)}(x,\beta)} \label{eq:proxoracle}\\
\textrm{where}\quad \prox_{f(\cdot;z)}(x,\beta) &= \argmin_{y} f(y;z) + \frac{\beta}{2}\norm{y-x}^2
\end{align}
The prox oracle is quite powerful and provides global rather than local information about $f$. In particular, querying the prox oracle with $\beta=0$ fully optimizes $f(\cdot;z)$.

As stated, $z$ is an argument to the oracle, however there are two distinct cases. In the ``fully stochastic'' oracle setting, the algorithm receives an oracle answer corresponding to a random $z \sim \Pcal$. We also consider a setting in which the algorithm is allowed to ``actively query'' the oracle. In this case, the algorithm may either sample $z \sim \Pcal$ or choose a desired $z$ and receive an oracle answer for that $z$. Our lower bounds hold for either type of oracle. Most optimization algorithms only use the fully stochastic oracle, but some require more powerful active queries. 

We capture the structure of a parallel optimization algorithm with a directed, acyclic \textbf{oracle graph} $\G$. Its depth, $D$, is the length of the longest directed path, and the size, $N$, is the number of nodes. Each node in the graph represents a single stochastic oracle access, and the edges in the graph indicate where the results of that oracle access may be used: only the oracle accesses from \emph{ancestors} of each node are available when issuing a new query. These limitations might arise e.g.~due to parallel computation delays or the expense of communicating between disparate machines.

Let $\Q$ be the set of possible oracle queries, with the exact form of queries (e.g., $q=x$ vs.~$q=(x, \beta, z)$) depending on the context. Formally, a randomized optimization algorithm that accesses the stochastic oracle $\Ocal$ as prescribed by the graph $\G$ is specified by associating with each node $v_t$ a query rule $R_t:(\Q,\Ocal(\Q))^*\times \Xi \rightarrow \Q$, plus a single output rule $\hat{X}:(\Q,\Ocal(\Q))^*\times \Xi \rightarrow\X$. We grant all of the nodes access to a source of shared randomness $\xi \in \Xi$ (e.g.~an infinite stream of random bits). The mapping $R_t$ selects a query $q_t$ to make at node $v_t$ using the set of queries and oracle responses in ancestors of $v_t$, namely
\begin{equation}
q_t = R_t\big(\parens{q_i, \Ocal(q_i)\,:\,i \in \textrm{Ancestors}(v_t)}, \xi \big)
\end{equation}
Similarly, the output rule $\hat{X}$ maps from all of the queries and oracle responses to the algorithm's output as $\hat{x} = \hat{X}\parens{(q_i, \Ocal(q_i):i \in [N]), \xi}$.
The essential question is: for a class of optimization problems $(\G,\Ocal,\mathcal{F})$ specified by a dependency graph $\G$, a stochastic oracle $\Ocal$, and a function class $\mathcal{F}$, what is the best possible guarantee on the expected suboptimality of an algorithm's output, i.e.~
\begin{equation}
\inf_{\parens{R_1,\dots,R_N,\hat{X}}} \sup_{f \in \mathcal{F}} 
\mathbb{E}_{\hat{x},z}\left[f(\hat{x};z)\right] - \min_{x} \mathbb{E}_{z}\left[f(x;z)\right]
\end{equation}

In this paper, we consider optimization problems $(\G,\Ocal,\FLHB)$
where $\FLHB$ is the class of convex, $L$-Lipschitz, and $H$-smooth
functions on the domain $\set{x\in\R^m:\norm{x}\leq B}$ and
parametrized by $z$, and $\Ocal$ is either a stochastic gradient
oracle $\Ograd$ \eqref{eq:gradoracle} or a stochastic prox oracle
$\Oprox$ \eqref{eq:proxoracle}. We consider this function class to
contain Lipschitz but non-smooth functions too, which corresponds to
$H = \infty$. Our function class does not bound the dimension $m$ of
the problem, as we seek to understand the best possible guarantees in
terms of Lipschitz and smoothness constants that hold in any
dimension. Indeed, there are (typically impractical) algorithms such
as center-of-mass methods, which might use the dimension in order to
significantly reduce the oracle complexity, but at a potentially huge
computational cost. \citet{nemirovski1994parallel} studied non-smooth
optimization in the case that the dimension is bounded, proving lower
bounds in this setting that scale with the $1/3$-power of the
dimension but have only logarithmic dependence on the
suboptimality. 
%
We do not analyze strongly convex functions, but the situation is similar and lower bounds can be established via reduction \cite{woodworth2016tight}.

\vspace{-0.2 cm}
\section{Lower bounds}\label{sec:graphLBs}
\vspace{-0.2 cm}

We now provide lower bounds for optimization problems $(\G,\Ograd,\FLHB)$ and $(\G,\Oprox,\FLHB)$ in terms of $L$, $H$, $B$, and the depth and size of $\G$. 

\begin{restatable}{theorem}{gradonlyLB} \label{thm:gradonlyLB}
Let $L,B \in (0,\infty)$, $H \in [0,\infty]$, $N \geq D \geq 1$, let $\G$ be any oracle graph of depth $D$ and size $N$ and consider the optimization problem $(\G,\Ograd,\FLHB)$. For any randomized algorithm $\A = (R_1,\dots,R_N,\hat{X})$, there exists a distribution $\Pcal$ and a convex, $L$-Lipschitz, and $H$-smooth function $f$ on a $B$-bounded domain in $\R^m$ for $m = O\parens{\max\set{N^2,D^3N}\log\parens{DN}}$ such that
\[
\mathbb{E}_{\substack{z\sim\Pcal\\\hat{X} \sim \A}}\left[f(\hat{X};z)\right] - \min_{x} \mathbb{E}_{z\sim\Pcal}\left[f(x;z)\right]
\geq 
\Omega\parens{\min\set{\frac{LB}{\sqrt{D}},\ \frac{HB^2}{D^2}} + \frac{LB}{\sqrt{N}}}
\]
\end{restatable}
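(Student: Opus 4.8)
The plan is to combine two separate lower-bound constructions — one that forces the $\min\{LB/\sqrt{D},\, HB^2/D^2\}$ term by exploiting the depth $D$, and one that forces the $LB/\sqrt{N}$ term by exploiting the finite number $N$ of samples — and then argue that a single hard instance captures the maximum of the two (up to constants, which suffices since $\max\{a,b\} \geq \tfrac12(a+b)$). For the depth term, I would use the classical "chain" or "Nemirovski-type" hard function: a function of the form $f(x) = \max_i (\langle a_i, x\rangle - b_i)$ (non-smooth case) or a smoothed/quadratic variant like $f(x) = \sum_i (\langle a_i,x\rangle - x_{i+1})^2$-style nested construction for the smooth case, where the coordinates/directions $a_1, \ldots, a_D$ are revealed one at a time: a query can only have a nonzero component in direction $a_{k+1}$ if it already "knows" $a_1,\ldots,a_k$, and learning $a_k$ requires a query that already exhibits $a_1,\ldots,a_{k-1}$. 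Since the longest dependency chain in $\G$ has length $D$, no node can discover more than $D$ of the directions, so the output is stuck in a $D$-dimensional subspace and pays $\Omega(LB/\sqrt{D})$ (or $\Omega(HB^2/D^2)$ in the smooth regime, which is the standard smooth first-order rate over $D$ steps). The subtlety is that many nodes run in parallel, so I cannot simply say "after $t$ queries you know $t$ directions"; instead I need a combinatorial/inductive argument over the DAG showing that the set of directions "known" at node $v$ has size at most $\mathrm{depth}(v)$, with high probability over the randomization of the $a_i$'s.

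\textbf{For the sample term}, the idea is standard statistical-estimation-flavored: take $f(x;z) = \langle z, x\rangle$ (or a Lipschitz truncation thereof) with $z$ a random sign vector in a large ambient dimension, scaled so that $f$ is $L$-Lipschitz and the domain radius is $B$. With only $N$ oracle accesses — even active ones — the algorithm sees at most $N$ samples $z_1,\ldots,z_N$, and the empirical mean $\bar z$ concentrates around the true mean $\E z$ only at rate $1/\sqrt{N}$ in the relevant (per-coordinate, high-dimensional) sense; a packing/Assouad or Fano-style argument over a hypercube of candidate distributions then yields an $\Omega(LB/\sqrt{N})$ bound on the expected suboptimality regardless of how the algorithm uses its $N$ samples. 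This part does not care about the graph structure at all beyond the node count $N$, so it is essentially the one-dimensional-per-coordinate mean-estimation lower bound tensorized up; the only care needed is to handle the active-query oracle (choosing $z$ rather than sampling it gives the algorithm nothing here, since the hard part is estimating $\E_\Pcal z$, and a chosen $z$ reveals nothing about $\Pcal$) and to make sure the randomness directions used here are taken orthogonal to (and independent of) the directions used in the depth construction.

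\textbf{To glue the two pieces}, I would put the two constructions on orthogonal blocks of coordinates and take $f = f_{\mathrm{depth}} + f_{\mathrm{sample}}$ with the Lipschitz/smoothness budgets split by a constant factor; since the two blocks are independent and orthogonal, the suboptimality is additive and the output must pay the sum of the two individual lower bounds, giving $\Omega\!\left(\min\{LB/\sqrt D,\, HB^2/D^2\} + LB/\sqrt N\right)$. The dimension bookkeeping — each "direction revealed one at a time" trick typically needs a fresh random direction in $\R^{\mathrm{poly}}$ to survive a union bound over all $N$ nodes and all $D$ levels, and the smooth construction's nested-quadratic variant spends roughly $D$ coordinates per chain and must be robust to all $N$ nodes' queries — is what produces the stated $m = O(\max\{N^2, D^3 N\}\log(DN))$, and verifying that this many coordinates genuinely suffices (that the "wrong-subspace" event holds with probability close to $1$ simultaneously for every node) is the step I expect to be the main obstacle. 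The rest is assembling known single-oracle lower-bound calculations (for the $\min\{LB/\sqrt D, HB^2/D^2\}$ rate I would lean on the standard deterministic first-order lower bounds adapted to the DAG, and for $LB/\sqrt N$ on the standard statistical lower bound) rather than inventing new analysis.
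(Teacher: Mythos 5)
Your plan matches the paper's overall strategy: a Nemirovski-style chain function for the depth term, Moreau-envelope smoothing so that the same construction is simultaneously $L$-Lipschitz and $H$-smooth (which is what produces the $\min\{LB/\sqrt{D},\ HB^2/D^2\}$ form in one shot, rather than needing separate non-smooth and quadratic constructions as you tentatively suggest), random orthonormal directions plus a ``cannot discover an unrevealed direction'' argument organized by DAG depth, a separate statistical lower bound for the $LB/\sqrt{N}$ term, and Yao's minimax principle to pass from deterministic to randomized algorithms. Two of your choices diverge from the paper and are worth noting. First, you propose to glue the two constructions into a single $f = f_{\mathrm{depth}} + f_{\mathrm{sample}}$ on orthogonal coordinate blocks; the paper avoids this entirely by proving each term as a standalone lower bound and then observing that for any fixed algorithm one can take whichever of the two hard instances gives the larger suboptimality, and $\max\{a,b\} \geq \tfrac12(a+b)$ already gives the stated sum. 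The gluing route is not wrong, but it forces you to verify that oracle responses from one block cannot leak information about the other and to track a split Lipschitz/smoothness budget, work that the max trick makes unnecessary. Second, the paper's $LB/\sqrt{N}$ term is obtained from a one-dimensional biased-coin estimation argument (a single $\{\pm 1\}$-valued $z$ with bias $\pm\epsilon$, analyzed via a binomial anti-concentration bound), not a high-dimensional Assouad/Fano hypercube; the scalar version is both simpler and already tight at the $1/\sqrt{N}$ rate. The step you correctly flag as the main obstacle --- showing, uniformly over all $N$ nodes, that a node at depth $t$ has negligible inner product with every $v_j$, $j \geq t$ --- is exactly where the paper does its work: it partitions the queries into sets $X_1,\ldots,X_{D+1}$ by DAG depth and invokes a ``vector-guessing'' lemma (a variant of arguments of Woodworth--Srebro and Carmon--Duchi--Hinder--Sidford) controlling the conditional probability, via rotational symmetry and spherical concentration, that any query at depth $t$ correlates with an as-yet-unrevealed $v_j$; your proposal names the target but leaves this lemma entirely to be supplied.
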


\begin{restatable}{theorem}{proxLB} \label{thm:proxLB}
Let $L,B \in (0,\infty)$, $H \in [0,\infty]$, $N \geq D \geq 1$, let $\G$ be any oracle graph of depth $D$ and size $N$ and consider the optimization problem $(\G,\Oprox,\FLHB)$. For any randomized algorithm $\A = (R_1,\dots,R_N,\hat{X})$, there exists a distribution $\Pcal$ and a convex, $L$-Lipschitz, and $H$-smooth function $f$ on a $B$-bounded domain in $\R^m$ for $m = O\parens{\max\set{N^2,D^3N}\log\parens{DN}}$ such that
\[
\mathbb{E}_{\substack{z\sim\Pcal\\\hat{X} \sim \A}}\left[f(\hat{X};z)\right] - \min_{x} \mathbb{E}_{z\sim\Pcal}\left[f(x;z)\right] 
\geq \Omega\parens{\min\set{\frac{LB}{D},\ \frac{HB^2}{D^2}} + \frac{LB}{\sqrt{N}}}
\]
\end{restatable}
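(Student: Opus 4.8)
The plan is to prove the two lower bounds in Theorems \ref{thm:gradonlyLB} and \ref{thm:proxLB} by constructing a single hard instance — a random distribution $\Pcal$ over functions $f(\cdot;z)$ — that simultaneously forces the three additive terms. Since the bound is of the form $\Omega(A + B)$ where $A$ is a "depth" term and $B = LB/\sqrt{N}$ is a "statistical" term, it suffices (up to constants) to prove each term separately and take the max, since $\max\{A,B\} \geq \tfrac12(A+B)$. The statistical term $LB/\sqrt{N}$ is the classical stochastic first-order lower bound: with only $N$ samples total, no algorithm can estimate the minimizer of an expectation of $L$-Lipschitz convex functions better than $\Omega(LB/\sqrt N)$, and this is insensitive to the graph structure, so it should follow from a standard information-theoretic or Le-Cam-style argument (e.g.\ the construction in \cite{nemirovskii1983problem, woodworth2016tight}) embedded into $\G$ trivially. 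The real work is the depth term, so I will focus there.

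For the depth term, **the core idea** is an adaptation of the classical Nemirovski-style "chain" function that makes progress at most one coordinate per oracle round, combined with a randomized embedding that defeats the shared randomness and the non-sequential query rules. The standard construction is $f(x;z) = \max_i \langle z_i, x\rangle$-type or a "soft" smoothed variant: one picks a random orthogonal basis / random sign pattern $v_1, \dots, v_D \in \R^m$ (the randomness is why we need $m$ large — a union bound over all $N$ possible queries must show that with high probability no query "guesses" the next direction $v_{i+1}$ before a descendant of a node that has revealed $v_i$), and defines $f$ so that the oracle at a query $x$ reveals the direction $v_{i+1}$ only if $x$ already has substantial correlation with $v_1, \dots, v_i$. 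Because the oracle graph has depth $D$, any chain of queries revealing $v_1, v_2, \dots$ in sequence can have length at most $D$, so after all $N$ oracle calls the algorithm has "discovered" at most $D$ of the directions, and its output must be $\Omega$-far from the true minimizer in the remaining $m - D$ directions. Scaling the construction to be $L$-Lipschitz on the $B$-ball gives $\Omega(LB/\sqrt{D})$ in the non-smooth (gradient) case and $\Omega(HB^2/D^2)$ when one instead uses a smoothed chain function (a sum of squared-hinge / Huber-type terms, as in \cite{nesterov1983method, woodworth2016tight}), whose per-round progress is $O(1/D)$ in function value. The prox oracle is more powerful — a single prox call with $\beta = 0$ fully minimizes $f(\cdot;z)$ — so the non-smooth depth term degrades from $LB/\sqrt D$ to $LB/D$; to handle this I would design $f(\cdot;z)$ so that even a full minimization of one component reveals only one new direction $v_{i+1}$, which requires the "progress-gating" to be robust to exact minimization and not just to gradient steps (this is where the construction must differ from the gradient-only one, and is presumably why Theorem \ref{thm:proxLB} has a separate, weaker depth term).

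**The main obstacle** I anticipate is the combination of (i) shared randomness $\xi$ and (ii) the fact that queries at different nodes are issued in an arbitrary partial order rather than sequentially, so the usual "the $t$-th query can depend on at most $t-1$ revealed directions" bookkeeping must be replaced by a careful argument over the DAG: one needs to show that, with high probability over the random embedding, for every node $v_t$ the query $q_t$ — which is a fixed function of the at-most-(depth of $v_t$) ancestors' responses and of $\xi$ — has negligible correlation with any direction $v_j$ whose "unlock depth" $j$ exceeds the depth of $v_t$. Making this a clean induction on topological depth, while getting the dimension bound $m = O(\max\{N^2, D^3 N\}\log(DN))$ to come out (the $N^2$ from a union bound over pairs of queries, the $D^3 N$ from the smoothing resolution needed so that $D$ rounds of $1/D$-scale progress are distinguishable), is the technical heart. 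Once that "no shortcut" lemma is in place, lower-bounding the suboptimality of any $\hat X$ that correlates with at most $D$ of the $v_i$'s is a short computation, and stitching it together with the $LB/\sqrt N$ term via a product / direct-sum construction completes both theorems.
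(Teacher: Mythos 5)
Your plan follows the paper's proof essentially step for step: you peel off the $LB/\sqrt N$ statistical term via a standard two-point/Le-Cam argument (the paper's Lemma~\ref{lem:statisticalLB}), build a randomized "chain" from orthonormal vectors in which each oracle round reveals at most one new direction, and invoke a "no-shortcut" concentration lemma (the paper's Lemma~\ref{lem:keylowerboundlemma}, proved by a union bound over all $N$ queries and depths, hence the $m=O(\max\{N^2,D^3N\}\log(DN))$ dimension) together with Yao's minimax principle to handle shared randomness and the DAG query order — this is precisely the structure in Appendices~\ref{sec:vectorguessing} and~\ref{sec:proxappendix}. Your prescription that the prox construction must be "robust to exact minimization so that a full prox reveals only one new direction" is also exactly what the paper does, splitting the chain $\sum_r\phi_c(v_{r-1}^\top x - v_r^\top x)$ between two summands $f(\cdot;1)$ (odd links) and $f(\cdot;2)$ (even links), so that $\prox_{f(\cdot;z)}$ decomposes across $\spn\{v_1,\dots,v_t\}$ and its complement and advances only one link. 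One small slip worth fixing: you write that the output "must be $\Omega$-far from the true minimizer in the remaining $m-D$ directions," but the hard instance lives on only $2D$ random orthonormal vectors; the suboptimality comes from the algorithm's failure to align with $v_{D+1},\dots,v_{2D}$ (Lemma~\ref{lem:LHlbsuboptimality}), and the remaining $m-O(N)$ ambient dimensions are just padding needed so that the random directions are well-spread and cannot be guessed.
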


These are the tightest possible lower bounds in terms of just the depth and size of $\G$ in the sense that for all $D,N$ there are graphs $\G$ and associated algorithms which match the lower bound. Of course, for specific, mostly degenerate graphs they might not be tight. For instance, our lower bound for the graph consisting of a short sequential chain plus a very large number of disconnected nodes might be quite loose due to the artificial inflation of $N$. Nevertheless, for many interesting graphs they are tight, as we shall see in Section \ref{sec:examples}.

Each lower bound has two components: an ``optimization'' term and a
``statistical'' term. The statistical term $\Omega(LB/\sqrt{N})$ is
well known, although we include a brief proof of this portion of the bound
in Appendix \ref{sec:statisticalLB} for completeness. The optimization
term depends on the depth $D$, and indicates, intuitively, the best
suboptimality guarantee that can be achieved by an algorithm using
unlimited parallelism but only $D$ rounds of communication.
\citet{arjevani2015communication} also obtain lower bounds in terms of
rounds of communication, which are similar to how our lower bounds
depend on depth.  However they restricted the type of computations
that are allowed to the algorithm to a specific class of operations,
while we only limit the number of oracle queries and the dependency
structure between them, but allow forming the queries in any arbitrary
way.

Similar to \citet{arjevani2015communication}, to establish the
optimization term in the lower bounds, we construct functions that
require multiple rounds of sequential oracle accesses to optimize. In
the gradient oracle case, we use a single, deterministic function which
resembles a standard construction for first order optimization lower
bounds. 
For the prox case, we construct two functions inspired by previous
lower bounds for round-based and finite sum optimization
\cite{arjevani2015communication,woodworth2016tight}. In order to
account for randomized algorithms that might leave the span of
gradients or proxs returned by the oracle, we use a technique that was
proposed by \citet{woodworth2017lower,woodworth2016tight} and refined
by \citet{carmon2017lower}. For our specific setting, we must slightly
modify existing analysis, which is detailed
in Appendix \ref{sec:vectorguessing}.

A useful feature of our lower bounds is that they apply
when both the Lipschitz constant and smoothness are bounded
\emph{concurrently}. Consequently, 
``non-smooth'' in the subsequent discussion can be read as simply 
identifying the case where the $L$ term 
achieves the minimum as opposed to the $H$ term (even if $H <
\infty$). This is particularly important when studying stochastic parallel
optimization, since obtaining non-trivial guarantees in a purely
stochastic setting requires some sort of control on the magnitude of
the gradients (smoothness by itself is not sufficient), while
obtaining parallelization speedups often requires smoothness, and so
we would like to ask what is the best that can be done when both
Lipschitz and smoothness are controlled.  Interestingly, the dependence 
on both $L$ and $H$ in our bounds is tight, even when the other is constrained,
which shows that the optimization term cannot be substantially reduced
by using both conditions together.

In the case of the gradient oracle, we ``smooth out'' a standard
non-smooth lower bound construction \cite{nemirovskii1983problem,woodworth2017lower}; previous work has used a similar approach in slightly different settings \cite{agarwal2017lower,guzman2015lower}. 
For $\ell \leq L$ and $\eta \leq H$, and orthonormal
$v_1,\dots,v_{D+1}$ drawn uniformly at random, we define the
$\ell$-Lipschitz but non-smooth function $\tilde{f}$, and its
$\ell$-Lipschitz, $\eta$-smooth ``$\eta$-Moreau envelope''
\cite{bauschke2017convex}:
\begin{equation}
\tilde{f}(x) = \max_{1\leq r \leq D+1} \ell \parens{v_r^\top x - \frac{r-1}{2(D+1)^{1.5}}}
\qquad\qquad
f(x) = \min_{y} \tilde{f}(y) + \frac{\eta}{2}\norm{y-x}^2
\end{equation}
This defines a distribution over $f$'s based on the randomness in the draw of $v_1,\dots,v_{D+1}$, and we apply Yao's minimax principle. In Appendix \ref{sec:gradonlyappendix}, we prove Theorem \ref{thm:gradonlyLB} using this construction.

In the case of the prox oracle, we ``straighten out'' the smooth construction of \citet{woodworth2016tight}. For fixed constants $c,\gamma$, we define the following Lipschitz and smooth scalar function $\phi_c$:

\begin{minipage}[c]{0.33\textwidth}
\includegraphics[height=24mm,width=48mm]{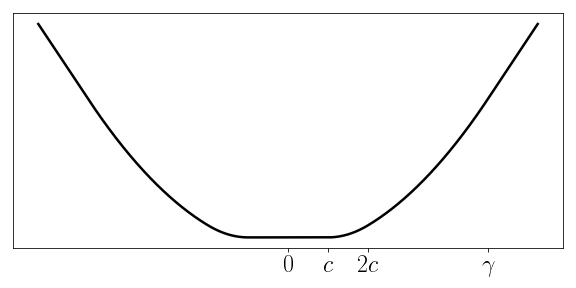}
\end{minipage} %
\begin{minipage}[c]{0.65\textwidth}
\vspace{-5mm}
\begin{align}
\phi_c(z) &= 
\begin{cases} 
0 & \abs{z} \leq c \\
2(\abs{z} - c)^2 & c < \abs{z} \leq 2c \\
z^2 - 2c^2 & 2c < \abs{z} \leq \gamma \\
2\gamma\abs{z} - \gamma^2 - 2c^2 & \abs{z} > \gamma
\end{cases} 
\end{align}
\end{minipage}

For $\Pcal = \textrm{Uniform}\set{1,2}$ and orthonormal $v_1,\dots,v_{2D}$ drawn uniformly at random, we define
\begin{align}
f(x;1) &=  
\frac{\eta}{8}\parens{-2av_{1}^\top x + \phi_c\parens{v_{2D}^\top x} + \hspace{-3mm}\sum_{r=3,5,7,...}^{2D-1} \hspace{-3mm}\phi_c\parens{v_{r-1}^\top x - v_{r}^\top x}}
\\
f(x;2) &= 
\frac{\eta}{8}\parens{\sum_{r=2,4,6,...}^{2D} \hspace{-3mm}\phi_c\parens{v_{r-1}^\top x - v_{r}^\top x}} 
\end{align}
Again, this defines a distribution over $f$'s based on the randomness in the draw of $v_1,\dots,v_{2D}$ and we apply Yao's minimax principle. In Appendix \ref{sec:proxappendix}, we prove Theorem \ref{thm:proxLB} using this construction.

\paragraph{Relation to previous bounds}
As mentioned above, \citet{duchi18a} recently showed a lower bound for
first- and zero-order stochastic optimization in the ``simple
parallelism'' graph consisting of $D$ layers, each with $M$
nodes. Their bound \citep[Thm 2]{duchi18a} applies only when
the dimension $m$ is constant, and $D = O(m\log \log M)$. Our
lower bound requires non-constant dimension, but applies in any range
of $M$. Furthermore, their proof techniques do not obviously
extend to prox oracles.

\section{Specific dependency graphs}
\label{sec:examples}
\iftoggle{graphs_fig}{
}

We now use our framework to study four specific parallelization
structures.  The main results (tight complexities and gaps between lower and upper
bounds) are summarized in Table \ref{table:example}. For
simplicity and without loss of generality, we set $B = 1$, i.e.~we normalize the
optimization domain to be $\{x \in \mathbb{R}^m: \norm{x} \leq 1\}$.  
All stated upper and lower bounds are for the expected
suboptimality $\mathbb{E}[F(\hat{x})] - F(x^*)$  of the algorithm's output.

\begin{table}[t]
\begin{center}
\begin{tabular}
{|c|c|c|}\hline
\multirow{1}{*}{ Graph example}   &With gradient oracle  &With gradient and prox oracle \\\hline
  \begin{tabular}{@{}c@{}}path($T$) \\ (Section \ref{sec:path})\end{tabular}  &\multicolumn{2}{|c|}{$\frac{L}{\sqrt{T}}$} \\ \hline
  \begin{tabular}{@{}c@{}}layer($T,M$) \\ (Section \ref{sec:layered})\end{tabular}  & $  \left(    \frac{L }{\sqrt{T}} \!\wedge\! \frac{H }{T^2} \right) \!+\! \frac{L}{\sqrt{MT}}$ &  $\left(  {\color{darkgreen} \frac{L}{T} } \!\wedge\! \frac{H }{T^2}     \right)\!+\! \frac{L}{\sqrt{MT}}$ \\ \hline
  \begin{tabular}{@{}c@{}}delay($T,\tau$) \\ (Section \ref{sec:delay})\end{tabular}   &{\color{darkgreen} $  \left(   \frac{L }{\sqrt{T/\tau}} \!\wedge\! \frac{ H   \tau^2}{T^2}   \right) \!+\! \frac{L}{\sqrt{T}}$} & {\color{darkgreen}$\left(  \frac{L\tau}{T} \!\wedge\! \frac{H\tau^2}{T^2}  \right) \!+\! \frac{L}{\sqrt{T}}$} \\ \hline
    \begin{tabular}{@{}c@{}}intermittent($T,K,M$) \\ (Section \ref{sec:intermittent})\end{tabular}   &  \begin{tabular}{@{}c@{}c@{}}{\color{red} $ {\textstyle \left(    \frac{L }{\sqrt{KT}} \!\wedge\! \frac{H }{K^2T^2} \right) \! + \! \frac{L}{\sqrt{MKT}}}$} \\ {{\color{blue} $  {\textstyle \frac{L}{\sqrt{KT}} \!\wedge\!  \left( \frac{H}{T^2} \!+\! \frac{L}{\sqrt{MKT}} \right)}$} } \\
{\color{blue}$\wedge   \left( \frac{H}{TK} \!+\! \frac{L}{\sqrt{MKT}} \right) \log \left( \frac{MKT}{L} \right) $}     \end{tabular} & \begin{tabular}{@{}c@{}c@{}} {\color{red}$ {\textstyle\left(  \frac{L}{KT} \!\wedge\! \frac{H }{K^2T^2}  \right) \!+\! \frac{L}{\sqrt{MKT}}}$} \\ { {\color{blue}$ {\textstyle \frac{L}{\sqrt{KT}} \!\wedge\!  \left( \left(  \frac{L}{T} \!\wedge\! \frac{H}{T^2}  \right) \!+\! \frac{L}{\sqrt{MKT}}  \right)  }$} } \\
{\color{blue}$\wedge  \left( \frac{H}{TK} \!+\! \frac{L}{\sqrt{MKT}} \right) \log \left( \frac{MKT}{L} \right) $} \end{tabular}  \\ \hline
\end{tabular}
\vspace{0.2 cm}
\caption{\small Summary of upper and lower bounds for stochastic convex optimization of $L$-Lipschitz and $H$-smooth  functions with $T$ iterations, $\machines$ machines, and $\K$ sequential steps per machine.
 {\color{darkgreen} Green} indicates lower bounds matched only by "unnatural" methods, {\color{red} red} and {\color{blue} blue} indicates a gap between the {\color{red} lower} and {\color{blue} upper} bounds.}
\label{table:example}
\vspace{-0.8cm}
\end{center}
\end{table}

\begin{minipage}[c]{0.5\textwidth}
\subsection{Sequential computation: the path graph}\label{sec:path}
\vspace{-1.2mm}
\end{minipage} %
\begin{minipage}[c]{0.5\textwidth}
\hfill
    \begin{tikzpicture}[scale=1.0]
        \vertex(p0) at (-3,0) {};
        \vertex(p1) at (-2,0) {};
        \vertex(p2) at (-1,0) {};
        \vertex(p3) at (0,0) {};
        \vertex(p4) at (1,0) {};
        \vertex(p5) at (2,0) {};
        \vertex(p6) at (3,0) {};
    \tikzset{EdgeStyle/.style={->}}
        \Edge(p0)(p1)
        \Edge(p1)(p2)
        \Edge(p2)(p3)
        \Edge(p3)(p4)
        \Edge(p4)(p5)
        \Edge(p5)(p6);
    \end{tikzpicture}
\end{minipage}


We begin with the simplest model, that of sequential computation
captured by the path graph of length $T$ depicted above. 
The ancestors of each
vertex $v_i,\,i=1 \dots T$ are all the preceding vertices
$(v_1,\ldots,v_{i-1})$.  The sequential model is of course well
studied and understood.  To see how it fits into our framework: A path
graph of length $T$ has a depth of $D=T$ and size of $N=T$, thus with
either gradient or prox oracles, the statistical term is dominant in
Theorems \ref{thm:gradonlyLB} and \ref{thm:proxLB}. These lower bounds are matched by
sequential stochastic gradient descent, yielding a tight complexity of
$\Theta(L/\sqrt{T})$ and the familiar conclusion that SGD is (worst
case) optimal in this setting.

\begin{minipage}[c]{0.5\textwidth}
\subsection{Simple parallelism: the layer graph}
\label{sec:layered}
\vspace{-1.2mm}
\end{minipage} %
\begin{minipage}[c]{0.5\textwidth}
\hfill
   \begin{tikzpicture}[scale=1.0]
        \vertex(p00) at (-3,-0.6) {};
        \vertex(p01) at (-2,-0.6) {};
        \vertex(p02) at (-1,-0.6) {};
        \vertex(p03) at (0,-0.6) {};
        \vertex(p04) at (1,-0.6) {};
        \vertex(p05) at (2,-0.6) {};
        \vertex(p06) at (3,-0.6) {};
        \vertex(p10) at (-3,0) {};
        \vertex(p11) at (-2,0) {};
        \vertex(p12) at (-1,0) {};
        \vertex(p13) at (0,0) {};
        \vertex(p14) at (1,0) {};
        \vertex(p15) at (2,0) {};
        \vertex(p16) at (3,0) {};
        \vertex(p20) at (-3,0.6) {};
        \vertex(p21) at (-2,0.6) {};
        \vertex(p22) at (-1,0.6) {};
        \vertex(p23) at (0,0.6) {};
        \vertex(p24) at (1,0.6) {};
        \vertex(p25) at (2,0.6) {};
        \vertex(p26) at (3,0.6) {};
    \tikzset{EdgeStyle/.style={->}}
        \Edge(p00)(p01)
        \Edge(p00)(p11)
        \Edge(p00)(p21)
        \Edge(p10)(p01)
        \Edge(p10)(p11)
        \Edge(p10)(p21)
        \Edge(p20)(p01)
        \Edge(p20)(p11)
        \Edge(p20)(p21)
        \Edge(p01)(p02)
        \Edge(p01)(p12)
        \Edge(p01)(p22)
        \Edge(p11)(p02)
        \Edge(p11)(p12)
        \Edge(p11)(p22)
        \Edge(p21)(p02)
        \Edge(p21)(p12)
        \Edge(p21)(p22)
        \Edge(p02)(p03)
        \Edge(p02)(p13)
        \Edge(p02)(p23)
        \Edge(p12)(p03)
        \Edge(p12)(p13)
        \Edge(p12)(p23)
        \Edge(p22)(p03)
        \Edge(p22)(p13)
        \Edge(p22)(p23)
        \Edge(p03)(p04)
        \Edge(p03)(p14)
        \Edge(p03)(p24)
        \Edge(p13)(p04)
        \Edge(p13)(p14)
        \Edge(p13)(p24)
        \Edge(p23)(p04)
        \Edge(p23)(p14)
        \Edge(p23)(p24)
        \Edge(p04)(p05)
        \Edge(p04)(p15)
        \Edge(p04)(p25)
        \Edge(p14)(p05)
        \Edge(p14)(p15)
        \Edge(p14)(p25)
        \Edge(p24)(p05)
        \Edge(p24)(p15)
        \Edge(p24)(p25)
        \Edge(p05)(p06)
        \Edge(p05)(p16)
        \Edge(p05)(p26)
        \Edge(p15)(p06)
        \Edge(p15)(p16)
        \Edge(p15)(p26)
        \Edge(p25)(p06)
        \Edge(p25)(p16)
        \Edge(p25)(p26);
    \end{tikzpicture}
\end{minipage}


We now turn to a model in which $\machines$ oracle queries can be made in
parallel, and the results are broadcast for use in making
the next batch of $\machines$ queries. This corresponds to synchronized
parallelism and fast communication between processors. The model is
captured by a layer graph of width $\machines$, depicted above for $M\!=\!3$. 
The graph consists of $T$ layers $i=1,\ldots,T$ each with $\machines$
nodes $v_{t,1},\ldots,v_{t,m}$ whose ancestors include $v_{t',i}$ for all $t' < t$ and
$i \in [\machines]$. The graph has a depth of $D=T$ and size
of $N= \machines T$.  With a stochastic gradient oracle, Theorem
\ref{thm:gradonlyLB} yields a lower bound of:
\begin{equation}\label{eq:LayeredGradLB}
\Omega \left( \min
    \left\{ \frac{L}{\sqrt{T}}, \frac{H}{T^2} \right\}
    + \frac{L}{\sqrt{\machines T}} \right)
\end{equation}
which is matched by accelerated mini-batch SGD (\ambsgd)
\citep{lan2012optimal,cotter2011better}, establishing the optimality
of \ambsgd in this setting.  For sufficiently smooth objectives, the
same algorithm is also optimal even if prox access is allowed, since
Theorem \ref{thm:proxLB}  implies a lower bound of:
\begin{equation}
  \label{eq:LayeredProxLB}
   \Omega \left( \min
    \left\{ \frac{L}{T}, \frac{H}{T^2} \right \}
    + \frac{L}{\sqrt{MT}} \right).
\end{equation}
That is, for smooth objectives, having access to a prox oracle does
not improve the optimal complexity over just using gradient access.
However, for non-smooth or insufficiently smooth objectives, there is a gap
between \eqref{eq:LayeredGradLB} and \eqref{eq:LayeredProxLB}.
An optimal algorithm, smoothed \ambsgd, uses the prox oracle in order
to calculate gradients of the Moreau envelope of $f(x;z)$ (cf.~Proposition 12.29
of \cite{bauschke2017convex}), and then performs \ambsgd on the smoothed
objectives. This yields a suboptimality guarantee that precisely matches
\eqref{eq:LayeredProxLB}, establishing that the lower bound from Theorem \ref{thm:proxLB} 
is tight for the layer graph, and that smoothed \ambsgd is optimal. 
An analysis of the smoothed
\ambsgd algorithm is provided in Appendix \ref{sec:smoothedambsgd}.

\natinote{Mention aggregate prox, or skip it?}

\begin{minipage}[c]{0.5\textwidth}
\subsection{Delayed updates}
\label{sec:delay}
\vspace{-1.2mm}
\end{minipage} %
\begin{minipage}[c]{0.5\textwidth}
\hfill
\begin{tikzpicture}[scale=1.0]
        \vertex(p0) at (-3,0) {};
        \vertex(p1) at (-2,0) {};
        \vertex(p2) at (-1,0) {};
        \vertex(p3) at (0,0) {};
        \vertex(p4) at (1,0) {};
        \vertex(p5) at (2,0) {};
        \vertex(p6) at (3,0) {};
    \tikzset{EdgeStyle/.style={->}}
        \draw [->] (p0) to [out=30,in=150] (p2);
        \draw [->] (p1) to [out=30,in=150] (p3);
        \draw [->] (p0) to [out=-30,in=-150] (p3);
        \draw [->] (p1) to [out=-30,in=-150] (p4);
        \draw [->] (p2) to [out=30,in=150] (p4);
        \draw [->] (p2) to [out=-30,in=-150] (p5);
        \draw [->] (p3) to [out=30,in=150] (p5);
        \draw [->] (p4) to [out=30,in=150] (p6);
        \draw [->] (p3) to [out=-30,in=-150] (p6);
    \end{tikzpicture}
\end{minipage}

We now turn to a delayed computation model that is typical in many
asynchronous parallelization and pipelined computation settings,
e.g.~when multiple processors or machines are working asynchronously,
reading iterates, taking some time to perform the oracle accesses and
computation, then communicating the results back (or updating the
iterate accordingly)
\citep{bertsekas1989parallel,nedic2001distributed,agarwal2011distributed,mcmahan2014delay,sra2016adadelay}.
This is captured by a ``delay graph'' with $T$ nodes $v_1,\ldots,v_T$
and delays $\tau_t$ for the response to the oracle query performed at $v_t$ to become available. Hence,
$\textrm{Ancestors}(v_t) = \set{v_s \mid s + \tau_s \le t}$.
Analysis is typically based on the delays being bounded, i.e. $\tau_t \leq \tau$ for all $t$. 
The depiction above corresponds to $\tau_t=2$; the case $\tau_t = 1$ corresponds to the path graph.
With constant delays $\tau_t=\tau$, the delay graph has depth
$D \le T/\tau$ 
and size $N=T$, so Theorem~\ref{thm:gradonlyLB}
gives the following lower bound when using a gradient oracle:
\begin{equation}
  \label{eq:delayedLB}
  \Omega\parens{\min\set{\frac{L}{\sqrt{T/\tau}}, \frac{H}{(T/\tau)^2}}
     + \frac{L}{\sqrt{T}}}.
\end{equation}
Delayed SGD, with updates $x_t \leftarrow x_{t-1} - \eta_t \nabla f(x_{t-\tau_t}; z)$, is a natural algorithm in this setting.
Under the bounded delay assumption
the best guarantee we are aware of for delayed update SGD is (see \citep{feyzmahdavian2016asynchronous} improving over \citep{agarwal2011distributed}) 
\begin{equation}
  \label{eq:delayedSGD}
O\left( \frac{H}{T/\tau^2} + \frac{L}{\sqrt{T}} \right).  
\end{equation}
This result is significantly worse than the lower bound
\eqref{eq:delayedLB} and quite disappointing. It does not
provide for a $1/T^2$ accelerated optimization rate, but even worse, compared to non-accelerated SGD it suffers a slowdown quadratic
in the delay, compared to the linear slowdown we would
expect.  In particular, the guarantee \eqref{eq:delayedSGD} only
allows maximum delay of $\tau = O(T^{1/4})$ in order to attain the optimal
statistical rate $\Theta(L/\sqrt{T})$, whereas the lower
bound allows a delay up to $\tau = O(T^{3/4})$.

This raises the question of whether a different algorithm
can match the lower bound \eqref{eq:delayedLB}.  The answer is
affirmative, but it requires using an
``unnatural'' algorithm, which simulates a mini-batch approach in what
seems an unnecessarily wasteful way.  We refer to this as a
``wait-and-collect'' approach: it works in $T/(2\tau)$ stages,
each stage consisting of $2\tau$ iterations (i.e.~nodes or oracle accesses).
In stage $i$, $\tau$ iterations are used to obtain
$\tau$ stochastic gradient estimates $\nabla f(x_i;z_{2 \tau i +
  j}),\,j=1,\dots,\tau$ at the same point $x_i$.  For the remaining $\tau$
iterations, we wait for all the preceding oracle computations to
become available and do not even use our allowed oracle access.  We can then
finally update the $x_{i+1}$ using the minibatch of $\tau$ gradient estimates. 
This approach is also specified formally as Algorithm
\ref{alg:wait-and-collect} in Appendix \ref{sec:waitandcollect}. Using this
approach, we can perform $T/(2\tau)$ \ambsgd updates with a
minibatch size of $\tau$, yielding a suboptimality guarantee that
precisely matches the lower bound \eqref{eq:delayedLB}.

Thus \eqref{eq:delayedLB} indeed represents the
tight complexity of the delay graph with a stochastic gradient oracle,
and the wait-and-collect approach is optimal.
However, this answer is somewhat disappointing and leaves an
intriguing open question: can a more natural, and seemingly more
efficient (no wasted oracle accesses) delayed update SGD algorithm also
match the lower bound? An answer to this question has two parts:
first, does the delayed update SGD truly suffer from a $\tau^2$ slowdown
as indicated by \eqref{eq:delayedSGD}, or does it achieve
linear degradation and a speculative guarantee of
\begin{equation}
  \label{eq:delayedMaybe}
O \left( \frac{H}{T/\tau} + \frac{L}{\sqrt{T}} \right).  
\end{equation}
Second, can delayed update SGD be accelerated to achieve the optimal
rate \eqref{eq:delayedLB}.  We note that concurrent with our work
there has been progress toward closing this gap: \citet{arjevani2018delayedsgd}
showed an improved bound matching the non-accelerated \eqref{eq:delayedMaybe}
for delayed updates (with a fixed delay) on quadratic objectives. It
still remains to generalize the result to smooth non-quadratic
objectives, handle non-constant bounded delays, and accelerate the
procedure so as to improve the rate to $(\tau/T)^2$.

\begin{minipage}[c]{0.5\textwidth}
\subsection{Intermittent communication}
\label{sec:intermittent}
\vspace{-1.2mm}
\end{minipage} %
\begin{minipage}[c]{0.5\textwidth}
\hfill
    \begin{tikzpicture}[scale=1.0]
        \vertex(p00) at (-3,-0.6) {};
        \vertex(p01) at (-2,-0.6) {};
        \vertex(p02) at (-1,-0.6) {};
        \vertex(p03) at (0,-0.6) {};
        \vertex(p04) at (1,-0.6) {};
        \vertex(p05) at (2,-0.6) {};
        \vertex(p06) at (3,-0.6) {};
        \vertex(p10) at (-3,0) {};
        \vertex(p11) at (-2,0) {};
        \vertex(p12) at (-1,0) {};
        \vertex(p13) at (0,0) {};
        \vertex(p14) at (1,0) {};
        \vertex(p15) at (2,0) {};
        \vertex(p16) at (3,0) {};
        \vertex(p20) at (-3,0.6) {};
        \vertex(p21) at (-2,0.6) {};
        \vertex(p22) at (-1,0.6) {};
        \vertex(p23) at (0,0.6) {};
        \vertex(p24) at (1,0.6) {};
        \vertex(p25) at (2,0.6) {};
        \vertex(p26) at (3,0.6) {};
    \tikzset{EdgeStyle/.style={->}}
        \Edge(p00)(p01)
        \Edge(p10)(p11)
        \Edge(p20)(p21)
        \Edge(p01)(p02)
        \Edge(p11)(p12)
        \Edge(p21)(p22)
        \Edge(p02)(p03)
        \Edge(p02)(p13)
        \Edge(p02)(p23)
        \Edge(p12)(p03)
        \Edge(p12)(p13)
        \Edge(p12)(p23)
        \Edge(p22)(p03)
        \Edge(p22)(p13)
        \Edge(p22)(p23)
        \Edge(p03)(p04)
        \Edge(p13)(p14)
        \Edge(p23)(p24)
        \Edge(p04)(p05)
        \Edge(p14)(p15)
        \Edge(p24)(p25)
        \Edge(p05)(p06)
        \Edge(p05)(p16)
        \Edge(p05)(p26)
        \Edge(p15)(p06)
        \Edge(p15)(p16)
        \Edge(p15)(p26)
        \Edge(p25)(p06)
        \Edge(p25)(p16)
        \Edge(p25)(p26);
    \end{tikzpicture}
\end{minipage}

We now turn to a parallel computation model which is relevant
especially when parallelizing across disparate machines: in each of $T$
iterations, there are $M$ machines that, instead of just a single oracle
access, perform $K$ sequential oracle accesses before
broadcasting to all other machines synchronously.  This communication
pattern is relevant in the realistic scenario where local computation is plentiful 
relative to communication costs (i.e.~$K$ is large). 
This may be the case with fast processors distributed across
different machines, or in the setting of federated learning, 
where mobile devices collaborate to
train a shared model while keeping their respective training datasets
local \cite{mcmahan17fedavg}.

This is captured by a graph consisting of $M$ parallel chains of
length $TK$, with cross connections between the chains every $K$
nodes.  Indexing the nodes as $v_{t,m,k}$, the nodes
$v_{t,m,1}\rightarrow \cdots \rightarrow v_{t,m,K}$ form a chain, and
$v_{t,m,K}$ is connected to $v_{t+1,m',1}$ for all $m'=1..M$.  This
graph generalizes the layer graph by allowing $K$ sequential oracle
queries between each complete synchronization; $K=1$ recovers the
layer graph, and 
the depiction above corresponds to $\K=\machines=3$.
We refer to the computation between each synchronization step as a
(communication) round.

\removed{In
particular, algorithms where a set of devices synchronize by computing
an average update are particularly attractive, as such algorithms can
be extended to provide stronger privacy guarantees, e.g. via
cryptographic secure aggregation protocols \cite{bonawitz17secagg} or
differential privacy\cite{mcmahan18dplm}.}

The depth of this graph is $D=TK$ and the size is $N=TKM$.
Focusing on the stochastic gradient oracle (the situation is similar
for the prox oracle, except with the potential of smoothing a
non-smooth objective, as discussed in Section \ref{sec:layered}),
Theorem~\ref{thm:gradonlyLB} yields the lower bound:
\begin{equation}
  \label{eq:intLB}
\Omega\parens{\min\set{\frac{L}{\sqrt{T \K}},\ \frac{H}{T^2\K^2}} + \frac{L}{\sqrt{T \K \machines}}}.
\removed{%
\qquad \text{and} \quad
\Omega\parens{\min\set{\frac{L}{T \K},\ \frac{H}{T^2 \K^2}} + \frac{L}{\sqrt{T \K \machines}}}}
\end{equation}
\removed{for the gradient and prox oracles respectively.}

A natural algorithm for this graph is parallel SGD, where we run an SGD chain on
each machine and average iterates during communication rounds, e.g. \cite{mcmahan17fedavg}.  The updates are then given by: 
\begin{equation}
  \label{eq:parSGD}
  \begin{aligned}
  x_{t,m,0} &= \frac{1}{M} \sum_{m'} x_{t,m',K} \\
  x_{t,m,k} &= x_{t,m,k-1} - \eta_t \nabla f(x_{t,m,k-1};z_{t,m,k}),\, k=1,\dots,K
\end{aligned}
\end{equation}
(note that $x_{t,m,0}$ does not correspond to any node in the graph, and is included for convenience of presentation).
Unfortunately, we are not aware of any satisfying analysis of such a
parallel SGD approach. 
Instead, we consider two other algorithms in an attempt to match the lower bound \eqref{eq:intLB}.
First, we can combine all $\K\machines$
oracle accesses between communication rounds in order to form a single mini-batch,
giving up on the possibility of sequential computation along the ``local''
$\K$ node sub-paths.  Using all $KM$ nodes to obtain stochastic
gradient estimates at the same point, we can perform $T$ iterations of \ambsgd 
with a mini-batch size of $KM$, yielding an upper bound
of
\begin{equation}
  \label{eq:intMB}
 O\parens{
    \frac{H}{T^2}
    + \frac{L}{\sqrt{T \K \machines}}}.
\end{equation}
This is a reasonable and common approach, and it is optimal (up to constant factors) when
$KM=O(\frac{L^2}{H^2} T^3)$ so that the statistical term is limiting.  However,  comparing
\eqref{eq:intMB} to the lower bound \eqref{eq:intLB} we see a gap by a
factor of $K^2$ in the optimization term, indicating the possibility
for significant gains when $K$ is large (i.e.~when we can process a
large number of examples on each machine at each round). Improving the optimization term by
this $K^2$ factor would allow statistical
optimality as long as $M=O(T^3K^3)$----this is a very significant
difference. In many scenarios we would expect a modest number of
machines, but the amount of data on each machine could
easily be much more than the number of communication rounds,
especially if communication is across a wide area network.

In fact, when $K$ is large, a different approach is preferable: we can ignore all but a single chain and simply execute $\K T$ iterations of sequential SGD, offering an upper bound of
\begin{equation}
  \label{eq:parSingle}
O\parens{\frac{L}{\sqrt{T \K}}}.
\end{equation}
Although this approach seems extremely wasteful, it actually yields a
better guarantee than \eqref{eq:intMB} when
$K \geq \Omega(T^3L^2/H)$. This is a realistic regime, e.g.~in federated learning when computation is distributed across devices, communication is limited and sporadic and so only a
relatively small number of rounds $T$ are possible, but each device already possesses a large amount of data. Furthermore, for non-smooth functions, \eqref{eq:parSingle} matches the lower bound \eqref{eq:intLB}.

Our upper bound on the complexity is therefore obtained by selecting
either \ambsgd or single-machine
sequential SGD, yielding a combined upper bound of
\begin{equation}
  \label{eq:parUB}
  O\parens{\min\set{ \frac{L}{\sqrt{T \K}},\ \frac{H}{T^2}}
    + \frac{L}{\sqrt{T \K \machines}}. }
\end{equation}
For smooth functions, there is still a significant gap between this upper bound and the
lower bound \eqref{eq:intLB}. Furthermore, this upper bound is not
achieved by a single algorithm, but rather a combination of two
separate algorithms, covering two different regimes. This
raises the question of whether there is a single, natural algorithm,
perhaps an accelerated variant of the parallel SGD updates
\eqref{eq:parSGD}, that at the very least matches \eqref{eq:parUB}, 
and preferably also improves over them in the intermediate regime 
or even matches the lower bound \eqref{eq:intLB}.

\removed{
It is useful to contrast the different concerns for parallel optimization on a fixed sample of size $n$ versus the federated setting. In the traditional parallel batch-learning setting, the goal is to maintain \emph{statistical} efficiency while increasing the parallelism $\machines$ as much as possible \citep{dekel2012optimal}. Thus, in the case of \ambsgd, one fixes $n=N = T \K \machines$, with the freedom to adjust the batch size $K \machines$ and $T$ together while maintaining this equality. Typically, one would fix $K$ to the smallest value that leverages each machines SIMD capabilities, and then choosing $M$ as large as possible. Setting
$\K \machines = \sqrt{L/H} n^\frac{3}{4}$ preserves the statistical rate $\Ocal(L/\sqrt{n})$ while allowing as much parallelism as possible.

In contrast, in the federated setting one may typically assume access to a very large pool of machines, each with fresh examples, and so $n$ is no longer fixed, and the primary goal is to decrease wall-clock training time as much as possible (which usually correlates closely with $T$, the number of communication rounds). Thus, if one increases $\machines$ sufficiently, the optimization rate $\Ocal(H/T^2)$ is achieved; since there is some cost to using larger $\machines$, it is desirable to choose $\machines$ as small as possible while still achieving this rate, namely
$\machines =  \Ocal(L^2 T^3 / H^2 K$).

\mcm{I don't know if we'll have space, but it might be nice to a clean way to summarize the open questions, which I think are an important contribution of this paper. We could put them as a bulleted list at the end of each subsection, but it is perhaps more natural to discuss them inline. Would it be worth calling out open questions with say italics?}

In addition to the natural questions of closing the gap for smooth functions and non-smooth functions with the prox oracle, there is an important practical question: In practice, running sequential SGD and \ambsgd and seeing which one performs best is cumbersome; thus it would very desirable to have a single (natural and efficient) algorithm that achieves the best of these two baselines.

} 


\paragraph{Active querying and SVRG} All methods discussed so far used
fully stochastic oracles, requesting a gradient (or prox
computation) with respect to an independently and randomly drawn
$z\sim\Pcal$.  We now turn to methods that also make active queries,
i.e.~draw samples from $\Pcal$ and then repeatedly query the oracle, at
different points $x$, but on the same samples $z$.  Recall that all of
our lower bounds are valid also in this setting.  

With an active query gradient oracle, we can implement SVRG
\cite{johnson13svrg,lee2017distributed} on an intermittent
communication graph.  More specifically, for an appropriate choice of $n$ and $\lambda$, 
we apply SVRG to the regularized empirical objective $\hat{F}_{\lambda}(x) = \frac{1}{n}\sum_{i=1}^n f(x;z_i) + \frac{\lambda}{2}\norm{x}^2$

\begin{algorithm}
  \caption{SVRG}
  \label{alg:svrg}
  \begin{algorithmic}
    \STATE Parameters: $n, S, I$,$\quad$ Sample $z_1,\dots,z_n \sim \Pcal$,$\quad$ Initialize $x_0 = 0$
    \FOR{$s=1,2,\dots,S=\left\lfloor T/ \parens{\left\lceil\frac{n}{KM}\right\rceil + \left\lceil\frac{I}{K}\right\rceil}\right\rfloor$}
    \STATE $\tilde{x} = x_{s-1},\quad x_s^0 = \tilde{x}$
    \STATE $\tilde{g} = \nabla \hat{F}_{\lambda}(\tilde{x}) = \frac{1}{n}\sum_{i=1}^n \nabla f(\tilde{x};z_i) + \lambda \tilde{x}$ \hfill $(*)$
    \FOR{$i = 1,2,\dots,I=\frac{H}{\lambda}$}
    \STATE Sample $j \sim \textrm{Uniform}\set{1,\dots,n}$
    \STATE $x_s^i = x_s^{i-1} - \eta \parens{\parens{\nabla f(x_s^{i-1};z_j) + \lambda x_s^{i-1}} - \parens{\nabla f(\tilde{x};z_j) + \lambda \tilde{x}} + \tilde{g}}$ \hfill $(**)$
    \ENDFOR
    \STATE $x_{s} = x_s^i$ for $i \sim \textrm{Uniform}\set{1,\dots,I}$
    \ENDFOR
    \STATE \textbf{Return} $x_S$
  \end{algorithmic}
\end{algorithm}

To do so, we first pick a sample $\{z_1, \dots z_n\}$ (without actually
querying the oracle). As indicated by Algorithm \ref{alg:svrg}, we then alternate between computing full gradients on $\{z_1, \dots z_n\}$ in parallel $(*)$, and sequential variance-reduced stochastic gradient updates in between $(**)$. The full gradient $\tilde{g}$ is computed using $n$ active queries to the gradient oracle. Since all of these oracle accesses are made at the same point $\tilde{x}$, this can be fully parallelized across the $M$ parallel chains of length $K$ thus requiring $n/KM$ rounds. The sequential variance-reduced stochastic gradient updates \emph{cannot} be parallelized in this way, and must be performed using queries to the gradient oracle in just one of the $M$ available parallel chains, requiring $I/K$ rounds of synchronization. Consequently, each outer iteration of SVRG requires $\left\lceil\frac{n}{KM}\right\rceil + \left\lceil\frac{I}{K}\right\rceil$ rounds. We analyze this method using $\lambda = \Theta\parens{\frac{L}{\sqrt{n}}}$, $I = \Theta\parens{\frac{H}{\lambda}} = \Theta\parens{\frac{H\sqrt{n}}{L}}$, and $n = \min\set{\Theta\parens{\frac{K^2T^2L^2}{H^2\log^2\parens{MKT/L}}}, \Theta\parens{\frac{MKT}{\log\parens{MKT/L}}}}$. Using the analysis of \citet{johnson13svrg}, SVRG guarantees that, with an appropriate stepsize, we have $\hat{F}_\lambda(x_S) - \min_{x}\hat{F}_\lambda(x) \leq 2^{-S}$; the value of $x_S$ on the empirical objective also generalizes to the population, so $\E{f(x_S;z)} - \min_{x}\E{f(x;z)} \leq 2^{-S} + O\parens{\frac{L}{\sqrt{n}}}$ (see \cite{shalev2008svm}). With our choice of parameters, this implies upper bound (see Appendix \ref{sec:analysis_svrg})
\begin{equation}
  \label{eq:parSVRG}
  O\parens{\parens{ \frac{H}{T \K} + \frac{L}{\sqrt{T K \machines} }}
    \log\parens{ \frac{T \K \machines}{ L}}}.  
\end{equation}

These guarantees improve over sequential SGD
\eqref{eq:parSGD} as soon as  $\machines > \log^2(T \K \machines / L)$
and $K>H^2/L^2$, i.e.~$L/\sqrt{TK} < L^2/H$.  This is a very wide
regime: we require only a moderate number of machines, and the second condition 
will typically hold for a smooth loss. Intuitively, SVRG does 
roughly the same number (up to a factor of two) of
sequential updates as in the sequential SGD approach but it uses
better, variance reduced updates. The price
we pay is in the smaller total sample size since we keep calling the
oracle on the same samples. Nevertheless, since SVRG only needs to calculate the
``batch'' gradient a logarithmic number of times, this incurs only an additional
logarithmic factor.  

Comparing \eqref{eq:intMB} and \eqref{eq:parSVRG}, we see that SVRG
also improves over \ambsgd as soon as $\K > T \log(T \K \machines/L)$,
that is if the number of points we are processing on each machine each
round is slightly more then the total number of rounds,
which is also a realistic scenario.

To summarize, the best known upper bound for optimizing with
intermittent communication using a pure stochastic oracle is \eqref{eq:parUB},
 which combines two different algorithms. However,
with active oracle accesses, SVRG is also possible and
the upper bound becomes:
\begin{equation}
  O\parens{\min\set{ \frac{L}{\sqrt{T \K}},\ \parens{ \frac{H}{T \K} + \frac{L}{\sqrt{T K \machines} }}
    \log\parens{ \frac{T \K \machines}{ L}},\    \frac{H}{T^2}
    + \frac{L}{\sqrt{T \K \machines}}} }
\end{equation}


\section{Summary}

Our main contributions in this paper are: (1) presenting a precise formal
oracle framework for studying parallel stochastic
optimization; (2) establishing tight oracle lower bounds in this
framework that can then be easily applied to particular instances of parallel optimization;
and (3) using the framework to study specific settings,
obtaining optimality guarantees, understanding where additional
assumptions would be needed to break barriers, and, perhaps most
importantly, identifying gaps in our understanding that highlight
possibilities for algorithmic improvement.
Specifically,
\begin{itemize}
  \item For non-smooth objectives and a stochastic prox
  oracle, smoothing and acceleration can improve performance in the 
  layer graph setting. It is not clear if there is a more direct 
  algorithm with the same optimal performance, e.g.~averaging the answers from the prox oracle.
  \item In the delay graph setting, delayed update SGD's guarantee is not
  optimal.  We suggest an
  alternative optimal algorithm, but it would be interesting
  and beneficial to understand the true behavior of delayed update
  SGD and to improve it as necessary to attain optimality.
  \item With intermittent communication, we show how different methods
  are better in different regimes, but even combining these methods
  does not match our lower bound.  This raises the question of whether our lower bound is
  achievable.  Are current methods optimal?  Is the true optimal
  complexity somewhere in between?  Even finding a single
  method that matches the current best performance in all
  regimes would be a significant advance here.
  \item With intermittent communication, active queries 
  allow us to obtain better performance in a certain
  regime.  Can we match this performance using pure
  stochastic queries or is there a real gap between active and pure
  stochastic queries?
\end{itemize}

The investigation into optimizing over $\FLHB$ in our framework
indicates that there is no advantage to the prox oracle for
optimizing (sufficiently) smooth functions. This raises the question
of what additional assumptions might allow us to leverage the prox
oracle, which is intuitively much stronger as it allows global
access to $f(\cdot;z)$. 
One option is to assume a bound on the variance of the
stochastic oracle i.e.~$\mathbb{E}_z [ \norm{\nabla f(x;z) - \nabla
  F(x)}^2 ] \leq \sigma^2$ which captures the notion that the
functions $f(\cdot;z)$ are somehow related and not arbitrarily
different.  In particular, if each stochastic oracle access, in each
node, is based on a sample of $b$ data points (thus, a prox
operation optimizes a sub-problem of size $b$), we have that
$\sigma^2 \leq L^2/b$.  Initial investigation into the complexity of
optimizing over the restricted class $\FLHBs$ (where we also require
the above variance bound), reveals a significant theoretical
advantage for the prox oracle over the gradient oracle, even for
smooth functions.  This is an example of how formalizing the
optimization problem gives insight into additional assumptions, in
this case low variance, that are necessary for realizing the
benefits of a stronger oracle.

\subsection*{Acknowledgements}
We would like to thank Ohad Shamir for helpful discussions. This work was partially funded by NSF-BSF award 1718970 (``Convex and
Non-Convex Distributed Learning'') and a Google Research Award.  BW is
supported by the NSF Graduate Research Fellowship under award
1754881. AS was supported by NSF awards  IIS-1447700 and AF-1763786, as well as a
Sloan Foundation research award. 

\bibliography{graphs_nips18}

\begin{thebibliography}{29}
\providecommand{\natexlab}[1]{#1}
\providecommand{\url}[1]{\texttt{#1}}
\expandafter\ifx\csname urlstyle\endcsname\relax
  \providecommand{\doi}[1]{doi: #1}\else
  \providecommand{\doi}{doi: \begingroup \urlstyle{rm}\Url}\fi

\bibitem[Agarwal and Duchi(2011)]{agarwal2011distributed}
Alekh Agarwal and John~C Duchi.
\newblock Distributed delayed stochastic optimization.
\newblock In \emph{Advances in Neural Information Processing Systems}, pages
  873--881, 2011.

\bibitem[Agarwal and Hazan(2017)]{agarwal2017lower}
Naman Agarwal and Elad Hazan.
\newblock Lower bounds for higher-order convex optimization.
\newblock \emph{arXiv preprint arXiv:1710.10329}, 2017.

\bibitem[Arjevani and Shamir(2015)]{arjevani2015communication}
Yossi Arjevani and Ohad Shamir.
\newblock Communication complexity of distributed convex learning and
  optimization.
\newblock In \emph{Advances in Neural Information Processing Systems}, pages
  1756--1764, 2015.

\bibitem[Arjevani et~al.(2018)Arjevani, Shamir, and
  Srebro]{arjevani2018delayedsgd}
Yossi Arjevani, Ohad Shamir, and Nathan Srebro.
\newblock A tight convergence analysis for stochastic gradient descent with
  delayed updates.
\newblock 2018.

\bibitem[Bauschke et~al.(2017)Bauschke, Combettes, et~al.]{bauschke2017convex}
Heinz~H Bauschke, Patrick~L Combettes, et~al.
\newblock \emph{Convex analysis and monotone operator theory in Hilbert
  spaces}, volume 2011.
\newblock Springer, 2017.

\bibitem[Bertsekas(1989)]{bertsekas1989parallel}
Dimitri~P Bertsekas.
\newblock \emph{Parallel and distributed computation: numerical methods},
  volume~23.
\newblock Prentice hall Englewood Cliffs, NJ, 1989.

\bibitem[Braverman et~al.(2016)Braverman, Garg, Ma, Nguyen, and
  Woodruff]{braverman2016communication}
Mark Braverman, Ankit Garg, Tengyu Ma, Huy~L Nguyen, and David~P Woodruff.
\newblock Communication lower bounds for statistical estimation problems via a
  distributed data processing inequality.
\newblock In \emph{Proceedings of the forty-eighth annual ACM symposium on
  Theory of Computing}, pages 1011--1020. ACM, 2016.

\bibitem[Carmon et~al.(2017)Carmon, Duchi, Hinder, and
  Sidford]{carmon2017lower}
Yair Carmon, John~C Duchi, Oliver Hinder, and Aaron Sidford.
\newblock Lower bounds for finding stationary points i.
\newblock \emph{arXiv preprint arXiv:1710.11606}, 2017.

\bibitem[Cotter et~al.(2011)Cotter, Shamir, Srebro, and
  Sridharan]{cotter2011better}
Andrew Cotter, Ohad Shamir, Nati Srebro, and Karthik Sridharan.
\newblock Better mini-batch algorithms via accelerated gradient methods.
\newblock In \emph{Advances in neural information processing systems}, pages
  1647--1655, 2011.

\bibitem[Duchi et~al.(2018)Duchi, Ruan, and Yun]{duchi18a}
John Duchi, Feng Ruan, and Chulhee Yun.
\newblock Minimax bounds on stochastic batched convex optimization.
\newblock In \emph{Proceedings of the 31st Conference On Learning Theory},
  pages 3065--3162, 2018.

\bibitem[Feyzmahdavian et~al.(2016)Feyzmahdavian, Aytekin, and
  Johansson]{feyzmahdavian2016asynchronous}
Hamid~Reza Feyzmahdavian, Arda Aytekin, and Mikael Johansson.
\newblock An asynchronous mini-batch algorithm for regularized stochastic
  optimization.
\newblock \emph{IEEE Transactions on Automatic Control}, 61\penalty0
  (12):\penalty0 3740--3754, 2016.

\bibitem[Garg et~al.(2014)Garg, Ma, and Nguyen]{garg2014communication}
Ankit Garg, Tengyu Ma, and Huy Nguyen.
\newblock On communication cost of distributed statistical estimation and
  dimensionality.
\newblock In \emph{Advances in Neural Information Processing Systems}, pages
  2726--2734, 2014.

\bibitem[Guzm{\'a}n and Nemirovski(2015)]{guzman2015lower}
Crist{\'o}bal Guzm{\'a}n and Arkadi Nemirovski.
\newblock On lower complexity bounds for large-scale smooth convex
  optimization.
\newblock \emph{Journal of Complexity}, 31\penalty0 (1):\penalty0 1--14, 2015.

\bibitem[Johnson and Zhang(2013)]{johnson13svrg}
Rie Johnson and Tong Zhang.
\newblock Accelerating stochastic gradient descent using predictive variance
  reduction.
\newblock In \emph{Advances in Neural Information Processing Systems}, 2013.

\bibitem[Lan(2012)]{lan2012optimal}
Guanghui Lan.
\newblock An optimal method for stochastic composite optimization.
\newblock \emph{Mathematical Programming}, 133\penalty0 (1-2):\penalty0
  365--397, 2012.

\bibitem[Lee et~al.(2017)Lee, Lin, Ma, and Yang]{lee2017distributed}
Jason~D Lee, Qihang Lin, Tengyu Ma, and Tianbao Yang.
\newblock Distributed stochastic variance reduced gradient methods by sampling
  extra data with replacement.
\newblock \emph{The Journal of Machine Learning Research}, 18\penalty0
  (1):\penalty0 4404--4446, 2017.

\bibitem[McMahan and Streeter(2014)]{mcmahan2014delay}
Brendan McMahan and Matthew Streeter.
\newblock Delay-tolerant algorithms for asynchronous distributed online
  learning.
\newblock In \emph{Advances in Neural Information Processing Systems}, pages
  2915--2923, 2014.

\bibitem[McMahan et~al.(2017)McMahan, Moore, Ramage, Hampson, and
  y~Arcas]{mcmahan17fedavg}
H.~Brendan McMahan, Eider Moore, Daniel Ramage, Seth Hampson, and Blaise~Aguera
  y~Arcas.
\newblock Communication-efficient learning of deep networks from decentralized
  data.
\newblock In \emph{Artificial Intelligence and Statistics}, 2017.

\bibitem[Nedi{\'c} et~al.(2001)Nedi{\'c}, Bertsekas, and
  Borkar]{nedic2001distributed}
A~Nedi{\'c}, Dimitri~P Bertsekas, and Vivek~S Borkar.
\newblock Distributed asynchronous incremental subgradient methods.
\newblock \emph{Studies in Computational Mathematics}, 8\penalty0 (C):\penalty0
  381--407, 2001.

\bibitem[Nemirovski(1994)]{nemirovski1994parallel}
Arkadi Nemirovski.
\newblock On parallel complexity of nonsmooth convex optimization.
\newblock \emph{Journal of Complexity}, 10\penalty0 (4):\penalty0 451--463,
  1994.

\bibitem[Nemirovski et~al.(1983)Nemirovski, Yudin, and
  Dawson]{nemirovskii1983problem}
Arkadii Nemirovski, David~Borisovich Yudin, and Edgar~Ronald Dawson.
\newblock Problem complexity and method efficiency in optimization.
\newblock 1983.

\bibitem[Nesterov(1983)]{nesterov1983method}
Yurii Nesterov.
\newblock A method of solving a convex programming problem with convergence
  rate o (1/k2).
\newblock 1983.

\bibitem[Shalev-Shwartz and Srebro(2008)]{shalev2008svm}
Shai Shalev-Shwartz and Nathan Srebro.
\newblock Svm optimization: inverse dependence on training set size.
\newblock In \emph{International Conference on Machine Learning}, pages
  928--935, 2008.

\bibitem[Slud et~al.(1977)]{slud1977distribution}
Eric~V Slud et~al.
\newblock Distribution inequalities for the binomial law.
\newblock \emph{The Annals of Probability}, 5\penalty0 (3):\penalty0 404--412,
  1977.

\bibitem[Sra et~al.(2016)Sra, Yu, Li, and Smola]{sra2016adadelay}
Suvrit Sra, Adams~Wei Yu, Mu~Li, and Alex Smola.
\newblock Adadelay: Delay adaptive distributed stochastic optimization.
\newblock In \emph{Artificial Intelligence and Statistics}, pages 957--965,
  2016.

\bibitem[Wang et~al.(2017)Wang, Wang, and Srebro]{wang2017memory}
Jialei Wang, Weiran Wang, and Nathan Srebro.
\newblock Memory and communication efficient distributed stochastic
  optimization with minibatch prox.
\newblock In \emph{Conference on Learning Theory}, 2017.

\bibitem[Woodworth and Srebro(2017)]{woodworth2017lower}
Blake Woodworth and Nathan Srebro.
\newblock Lower bound for randomized first order convex optimization.
\newblock \emph{arXiv preprint arXiv:1709.03594}, 2017.

\bibitem[Woodworth and Srebro(2016)]{woodworth2016tight}
Blake Woodworth and Nati Srebro.
\newblock Tight complexity bounds for optimizing composite objectives.
\newblock In \emph{Advances in Neural Information Processing Systems}, pages
  3639--3647, 2016.

\bibitem[Zhang et~al.(2013)Zhang, Duchi, Jordan, and
  Wainwright]{zhang2013information}
Yuchen Zhang, John Duchi, Michael~I Jordan, and Martin~J Wainwright.
\newblock Information-theoretic lower bounds for distributed statistical
  estimation with communication constraints.
\newblock In \emph{Advances in Neural Information Processing Systems}, pages
  2328--2336, 2013.

\end{thebibliography}
\bibliographystyle{plainnat}

\clearpage
\appendix

\section{Main lower bound lemma}\label{sec:vectorguessing}
This analysis closely follows that of previous work, specifically the proof of Theorem 1 in \cite{woodworth2017lower} and the proof of Lemma 4 in \cite{carmon2017lower}. There are slight differences in the problem setup between this work and that of previous papers, thus we include the following analysis for completeness and to ensure that all of our results can be verified. We do not claim any significant technical novelty within this section.

Let $V = \set{v_1,\dots,v_k}$ be a uniformly random orthonormal set of vectors in $\R^m$. All of the probabilities referred to in Appendix \ref{sec:vectorguessing} will be over the randomness in the selection of $V$.
Let $X = \set{x_1,x_2,\dots,x_N}$ be a set of vectors in $\R^m$ where $\norm{x_i} \leq 1$ for all $i \leq N$. Let these vectors be organized into disjoint subsets $X_1 \cup X_2 \cup \dots \cup X_k = X$. 
Furthermore, suppose that for each $t \leq k$, the set $X_t$ is a deterministic function $X_t = X_t(X_{<t},V)$, so it can also be expressed as $X_t = X_t(V)$.

Let $S_t = X_{\leq t} \cup V_{\leq t}$, let $P_t$ be the projection operator onto the span of $S_t$ and let $P^\perp_t$ be the projection onto the orthogonal complement of the span of $S_t$.
As in \cite{woodworth2017lower,carmon2017lower}, define
\begin{equation}\label{eq:defgt}
G_t = G_t(V) = \left\llbracket \forall x \in X_t\ \forall j \geq t\  \abs{\inner{\normalized{P_{t-1}^\perp x}}{v_j}} \leq \alpha \right\rrbracket
\end{equation}
Finally, suppose that for each $t$, $X_t$ is of the form:
\begin{equation}\label{eq:xtdecomposes}
X_t\parens{V} = X_t\parens{V_{<t}\indicator{G_{<t}} + V\indicator{\lnot G_{<t}}}
\end{equation}
i.e. conditioned on the event $G_{<t}$, it is a deterministic function of $V_{<t}$ only (and not $v_t,...,v_k$). We say that $\Prob{G_{<1}} = 1$, so $X_1$ is always independent of $V$.

First, we connect the events $G_t$ to a more immediately useful condition
\begin{restatable}{lemma}{Gksmallips}[cf. Lemma 9 \cite{carmon2017lower}, Lemma 1 \cite{woodworth2017lower}] \label{lem:Gksmallips}
For any $c$, $k$, $N$, $V$, and $\set{X_k}_{t=1}^k$, let 
$\alpha = \min\set{\frac{1}{4N},\ \frac{c}{2\parens{1 + \sqrt{2N}}}}$ then for each $t \leq k$
\[
G_{\leq t} \implies G'_{\leq t} := \left\llbracket \forall r \leq t,\ \forall x \in X_r,\ \forall j\geq t\ \abs{\inner{x}{v_j}} \leq \frac{c}{2} \right\rrbracket
\]
\end{restatable}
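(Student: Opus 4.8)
The plan is to prove the implication $G_{\leq t} \implies G'_{\leq t}$ by induction on $t$, unwinding the projection operators $P_{r-1}^\perp$ appearing in the definition of $G_r$ to bound the ``raw'' inner products $\abs{\inner{x}{v_j}}$ appearing in $G'$. The key observation is that for $x \in X_r$ and $j \geq t \geq r$, we can decompose $x = P_{r-1} x + P_{r-1}^\perp x$. The second term contributes at most $\norm{P_{r-1}^\perp x} \cdot \abs{\inner{\normalized{P_{r-1}^\perp x}}{v_j}} \leq 1 \cdot \alpha$ directly from the definition of $G_r$ (since $j \geq r$). The first term $P_{r-1} x$ lies in the span of $S_{r-1} = X_{\leq r-1} \cup V_{\leq r-1}$; since $j \geq t \geq r > r-1$, the vector $v_j$ is orthogonal to $V_{\leq r-1}$, so $\inner{P_{r-1}x}{v_j}$ only picks up contributions through the $X_{\leq r-1}$ part of the span.

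The main work is therefore to control $\abs{\inner{w}{v_j}}$ for an arbitrary unit vector $w$ in the span of $S_{r-1}$ and $j \geq t$. I would expand $w$ in an orthonormal basis obtained by Gram--Schmidt: $S_{r-1}$ is spanned by $V_{\leq r-1}$ together with the at most $N$ vectors in $X_{\leq r-1}$, so its span has dimension at most $(r-1) + \abs{X_{\leq r-1}} \leq k + N$, but more usefully, writing $w$ as a combination of $v_1,\dots,v_{r-1}$ (which are orthogonal to $v_j$) plus the components of the $x' \in X_{\leq r-1}$ orthogonal to $V_{\leq r-1}$, each such orthogonal component $x' - \sum_{i<r}\inner{x'}{v_i}v_i$ when normalized has inner product at most $\alpha$ with $v_j$ by (a telescoped application of) the $G$ events for the relevant earlier index. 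Summing at most $N$ such terms, each with coefficient bounded using $\norm{w} \leq 1$ and Cauchy--Schwarz, gives a bound of roughly $\alpha(1 + \sqrt{2N})$ on the part of $\inner{x}{v_j}$ coming from $P_{r-1}x$, plus the $\alpha$ from $P_{r-1}^\perp x$. With $\alpha = \min\set{\frac{1}{4N}, \frac{c}{2(1+\sqrt{2N})}}$ this totals at most $c/2$, as desired. The $\frac{1}{4N}$ branch of the minimum is what keeps the Gram--Schmidt coefficients from blowing up across the induction — it ensures the ``almost orthonormal'' set $X_{\leq r-1} \cup V_{\leq r-1}$ stays well-conditioned, so that expressing $w$ in terms of it does not amplify coefficients by more than a constant.

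The main obstacle I anticipate is the bookkeeping in the induction: showing that the near-orthonormality of the accumulated vectors is preserved, i.e. that under $G_{\leq t}$ the Gram matrix of $X_{\leq t} \cup V_{\leq t}$ is close enough to the identity (entries off the diagonal bounded by something like $c/2$ or $\alpha$, via the inductive hypothesis $G'$) that its inverse has bounded operator norm. This requires a quantitative lemma along the lines of ``a matrix that is $I$ plus a perturbation of small operator norm is invertible with controlled inverse'', applied to a matrix of size at most $(N+k)$; the $\frac{1}{4N}$ choice of $\alpha$ is precisely calibrated so the perturbation has norm at most $1/2$. I would structure the proof as: (i) state and prove the conditioning lemma for the accumulated vectors; (ii) perform the induction on $t$, at step $t$ using the conditioning from step $t-1$ together with the definition of $G_t$ to bound $\abs{\inner{x}{v_j}}$ for $x \in X_t$, $j \geq t$, and invoking the inductive hypothesis for $r < t$. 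Since the excerpt explicitly says this section claims no technical novelty and closely follows Lemma 9 of \cite{carmon2017lower} and Lemma 1 of \cite{woodworth2017lower}, I would largely transcribe their argument, adjusting only the constants to match the slightly different normalization ($\norm{x_i}\leq 1$ here) and the grouping of $X$ into the subsets $X_1,\dots,X_k$.
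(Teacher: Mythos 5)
Your proposal shares the overall architecture with the paper's proof: split $\inner{x}{v_j}$ into a contribution controlled directly by the event $G_r$ and a contribution coming from the projection onto $\spn{S_{r-1}}$, and control the latter by an inductive bookkeeping argument. That much matches. The gap is in the bookkeeping step. You propose to show that the Gram matrix of the raw set $X_{\leq t}\cup V_{\leq t}$ is close to the identity and then invoke an inverse--perturbation lemma. This is false: nothing in the events $G_r$ (or $G'_r$) constrains the inner products $\inner{x}{x'}$ between distinct queries in the same layer $X_r$, nor the inner products $\inner{x}{v_{j}}$ for $j<r$, nor are the $x$'s unit vectors (only $\norm{x}\leq 1$). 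Two queries in $X_1$ could even coincide, making the Gram matrix singular. So the $\tfrac{1}{4N}$ branch of $\alpha$ cannot be ``calibrated so the perturbation has operator norm at most $1/2$'' in the way you describe, and the conditioning lemma you sketch would not apply.

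The paper's actual proof (following Carmon et al.) sidesteps this entirely. It does not attempt to condition the raw $X$'s; instead it performs an inner induction on the scalar quantity $\norm{P_{t-1}v_j}^2$, proving $\norm{P_{t-1}v_j}^2 \leq 2\alpha^2\sum_{r<t}\abs{X_r}$ for all $j\geq t$ by expanding $P_{t-1}v_j$ over the (possibly over-complete) set $\set{\normalized{P_{r-1}^\perp x}\,:\,r<t,\ x\in X_r}\cup\set{\normalized{\hat P_{r-1}^\perp v_r}\,:\,r<t}$. Each of these vectors has inner product with $v_j$ controlled either directly by $G_{<t}$ or by the inductive hypothesis on $\norm{P_{r-1}v_r}$; the only ``conditioning''-type fact that is needed is the scalar lower bound $\norm{\hat P_{r-1}^\perp v_r}^2\geq 1/2$, which again follows from the inductive hypothesis rather than from any global Gram-matrix estimate. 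This is where your $\tfrac{1}{4N}$ is actually used, and it is a much weaker requirement than the near-orthogonality of all of $X_{\leq t}\cup V_{\leq t}$. If you restructure your argument around the explicit inductive bound on $\norm{P_{t-1}v_j}$ instead of a Gram-matrix invertibility claim, the proof goes through.
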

The proof of Lemma \ref{lem:Gksmallips} involves straightforward linear algebra, and we defer it to Appendix \ref{sec:vectorguessingdeferred}. By Lemma \ref{lem:Gksmallips}, $G_{<t} \subseteq G'_{<t}$, therefore the property \eqref{eq:xtdecomposes} is implied by
\begin{equation}\label{eq:xtdecomposesuseful}
X_t\parens{V} = X_t\parens{V_{<t}\indicator{G'_{<t}} + V\indicator{\lnot G'_{<t}}}
\end{equation}


Now, we state the main result which allows us to prove our lower bounds:
\begin{restatable}{lemma}{keylowerboundlemma}[cf. Lemma 4 \cite{carmon2017lower}, Lemma 4 \cite{woodworth2017lower}]\label{lem:keylowerboundlemma}
For any $k \geq 1$, $N \geq 1$, $c \in (0,1)$, and dimension 
\[
m \geq k+N+\max\set{32N^2,\ \frac{8(1+\sqrt{2N})^2}{c^2}}\log\parens{2k^2N}
\]
if the sets $X_1,\dots,X_k$ satisfy the property \eqref{eq:xtdecomposesuseful} then
\[
\Prob{\forall t \leq k\ \forall x \in X_t\ \forall j \geq t\ \ \abs{\inner{x}{v_j}} \leq \frac{c}{2}} \geq \frac{1}{2}
\]
\end{restatable}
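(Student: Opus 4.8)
The plan is to prove Lemma~\ref{lem:keylowerboundlemma} by induction on $t$, controlling the probability of the ``bad'' event at each stage and using a union bound over the $k$ stages. Define the event
\[
E_t = \left\llbracket \forall r \leq t\ \forall x \in X_r\ \forall j \geq t\ \ \abs{\inner{x}{v_j}} \leq \tfrac{c}{2} \right\rrbracket,
\]
so that we want $\Prob{E_k} \geq \tfrac12$. By Lemma~\ref{lem:Gksmallips}, it suffices to lower-bound $\Prob{G_{\leq k}}$, since $G_{\leq k} \implies G'_{\leq k} \subseteq E_k$. I would work with $G_t$ because its definition \eqref{eq:defgt} only involves the residual $P_{t-1}^\perp x$ of the vectors in $X_t$ after projecting out $S_{t-1} = X_{\leq t-1} \cup V_{\leq t-1}$, which is precisely what makes the conditional distribution tractable.

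The key structural observation is the ``decomposition'' hypothesis \eqref{eq:xtdecomposesuseful}: conditioned on $G'_{<t}$ (hence on $G_{<t}$, by Lemma~\ref{lem:Gksmallips}), the set $X_t$ is a deterministic function of $V_{<t}$ alone and does not depend on $v_t, \dots, v_k$. Therefore, conditioned on $G_{<t}$ and on the realization of $V_{<t}$, the vectors $v_t,\dots,v_k$ are distributed as a uniformly random orthonormal set in the orthogonal complement of $\spn{V_{<t}}$ (this uses that the full set $V$ is uniformly random orthonormal, and conditioning on the first $t-1$ of them leaves the rest Haar-distributed on the appropriate sphere/Stiefel manifold), and $X_t$ is \emph{fixed}. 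Now for a fixed unit vector $u = \normalized{P_{t-1}^\perp x}$ lying in that complement, and a uniformly random unit vector $v_j$ in a space of dimension $\geq m - k - N + 1$ (I would track the exact dimension bookkeeping here: we have removed at most $|X_{\leq t-1}| + (t-1) \leq N + k$ dimensions), the inner product $\inner{u}{v_j}$ is a single coordinate of a uniform unit vector, so by the standard spherical concentration / Gaussian-tail estimate, $\Prob{\abs{\inner{u}{v_j}} > \alpha} \leq 2\exp\parens{-\tfrac{\alpha^2(m-k-N)}{2}}$ roughly. Actually one must be slightly careful because $v_t,\dots,v_k$ are orthonormal, not i.i.d.\ uniform, but conditioning $v_j$ on $v_t,\dots,v_{j-1}$ (all orthogonal to $u$ or not) still leaves it uniform on a sphere of dimension $\geq m-k-N$, and the worst case is when none of them are orthogonal to $u$; in any event the same tail bound holds up to the dimension shift.

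Putting this together: conditioned on $G_{<t}$, the event $\lnot G_t$ requires that for some $x \in X_t$ (at most $N$ choices, since $|X| \leq N$) and some $j \in \{t,\dots,k\}$ (at most $k$ choices) we have $\abs{\inner{\normalized{P_{t-1}^\perp x}}{v_j}} > \alpha$; union-bounding,
\[
\Prob{\lnot G_t \mid G_{<t}} \leq 2kN\exp\parens{-\tfrac{\alpha^2(m-k-N)}{2}}.
\]
With $\alpha = \min\set{\tfrac{1}{4N},\ \tfrac{c}{2(1+\sqrt{2N})}}$ as in Lemma~\ref{lem:Gksmallips}, the hypothesis $m \geq k + N + \max\set{32N^2,\ \tfrac{8(1+\sqrt{2N})^2}{c^2}}\log(2k^2N)$ gives $\tfrac{\alpha^2(m-k-N)}{2} \geq \log(2k^2N)$, so $\Prob{\lnot G_t \mid G_{<t}} \leq \tfrac{1}{k}$. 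Since $\Prob{G_{<1}} = 1$, a telescoping union bound yields $\Prob{G_{\leq k}} = \prod_{t=1}^k \Prob{G_t \mid G_{<t}} \geq \parens{1 - \tfrac1k}^k$; to actually get $\tfrac12$ I would instead bound $\Prob{\lnot G_{\leq k}} \leq \sum_{t=1}^k \Prob{\lnot G_t \mid G_{<t}} \leq k \cdot \tfrac{1}{2k^2 \cdot(\text{slack})} $, i.e.\ choose constants so each conditional failure probability is $\leq \tfrac{1}{2k}$, which the stated $m$ bound comfortably affords (the $2k^2N$ inside the log rather than $2kN$ provides exactly this extra factor of $k$). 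Then $\Prob{G_{\leq k}} \geq \tfrac12$, and Lemma~\ref{lem:Gksmallips} finishes the argument.

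The main obstacle I anticipate is the conditioning argument in the middle step: one must verify carefully that conditioning on the event $G_{<t}$ (which is itself a complicated function of $V_{<t}$ and potentially the residuals) does not distort the distribution of $v_t,\dots,v_k$ away from Haar-on-the-complement. This is exactly where hypothesis \eqref{eq:xtdecomposesuseful} is essential: because on $G_{<t}$ the sets $X_{\leq t}$ depend only on $V_{<t}$, the event $G_{<t}$ is measurable with respect to $V_{<t}$, so conditioning on it does not touch $v_t, \dots, v_k$ at all — these remain a uniform orthonormal set in $\spn{V_{<t}}^\perp$. Getting this measurability statement airtight (and handling the indicator-substitution identity \eqref{eq:xtdecomposesuseful} which makes $X_t$ well-defined even off $G_{<t}$) is the delicate part; the concentration and union-bound steps are then routine.
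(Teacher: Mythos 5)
Your overall plan matches the paper's: reduce via Lemma~\ref{lem:Gksmallips} to bounding $\Prob{G_{\leq k}}$, factor into conditional probabilities $\Prob{G_t \mid G_{<t}}$, establish a spherical-symmetry claim for the conditional law of $v_j$, and finish with concentration plus a union bound over $x\in X_t$ and $j\geq t$. The concentration, union bound, and dimension/$\alpha$ bookkeeping at the end are essentially correct (the paper uses a surface-area estimate on the sphere and the inequality $(1-p)^k\geq 1-kp$ rather than your additive union bound over $t$, but these are cosmetic).

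There is, however, a genuine gap in the ``delicate part'' you flag at the end, and the way you propose to resolve it is wrong. You assert that on $G_{<t}$ the event $G_{<t}$ is measurable with respect to $V_{<t}$, so conditioning leaves $v_t,\dots,v_k$ Haar-distributed on $\spn{V_{<t}}^\perp$. This is false. Inspect the definition \eqref{eq:defgt}: each $G_r$ for $r<t$ checks $\abs{\inner{\normalized{P_{r-1}^\perp x}}{v_j}}\leq\alpha$ for \emph{all} $j\geq r$, which includes $j=t,\dots,k$. So $G_{<t}$ explicitly constrains the ``future'' vectors $v_t,\dots,v_k$, and is not $\sigma(V_{<t})$-measurable. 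The decomposition hypothesis \eqref{eq:xtdecomposesuseful} ensures the \emph{sets} $X_r$ depend only on $V_{<r}$ given $G_{<r}$, but the events $G_r$ themselves always look ahead. Moreover, even the relevant orthogonal complement is $\spn{S_{t-1}}^\perp$ with $S_{t-1}=X_{\leq t-1}\cup V_{\leq t-1}$, not merely $\spn{V_{<t}}^\perp$; the queries $X_{\leq t-1}$ must also be projected out.

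What the paper actually proves (Lemma~\ref{lem:symmetricconditionaldensity}) is weaker than your claim but sufficient: the conditional density $p(V_{\geq t}\mid G_{<t},V_{<t})$ is invariant under any rotation $R$ that fixes $\spn{S_{t-1}}$ pointwise. Establishing this requires a Bayes-rule computation combined with an induction showing that $\ProbGiven{G_{<t}}{V}=1\iff\ProbGiven{G_{<t}}{RV}=1$, which in turn uses the decomposition property \eqref{eq:xtdecomposesuseful} to propagate $X_r(RV)=X_r(V)$ forward. This rotational-invariance argument is the missing ingredient in your proof: without it you cannot conclude that $\normalized{P_{t-1}^\perp v_j}$ is uniform on the unit sphere of $\spn{S_{t-1}}^\perp$, and the subsequent concentration step is unjustified. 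The rest of your outline would go through once this lemma is supplied.
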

The proof of Lemma \ref{lem:keylowerboundlemma} relies upon the following, whose proof we defer to Appendix \ref{sec:vectorguessingdeferred}.
\begin{restatable}{lemma}{symmetricdensity}[cf. Lemma 11 \cite{carmon2017lower}, Lemma 3 \cite{woodworth2017lower}] \label{lem:symmetricconditionaldensity}
Let $R$ be any rotation operator, $R^\top R = I$, that preserves $S_{t-1}$, that is $Rw = R^\top w = w$ for any $w \in \textrm{Span}\parens{S_{t-1}}$. Then the following conditional densities are equal
\begin{equation*}
p\parens{V_{\geq t}\ \middle|\ G_{<t}, V_{<t}} = p\parens{RV_{\geq t}\ \middle|\ G_{<t}, V_{<t}}
\end{equation*}
\end{restatable}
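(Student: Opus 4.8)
The plan is to reduce the claim to an invariance property of a single indicator function, and then establish that property by induction on the layer index.

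\textbf{Reduction.} Since $G_{<t}$ is a deterministic function of $V = (V_{<t},V_{\geq t})$, Bayes' rule gives, for any orthonormal frame $w$ in $\textrm{Span}(v_{<t})^\perp$,
\[
p\parens{V_{\geq t}=w \ \middle|\ G_{<t},V_{<t}} \;=\; \frac{\indicator{G_{<t}(V_{<t},w)}\; p_0(w)}{\ProbGiven{G_{<t}}{V_{<t}}},
\]
where $p_0$ is the conditional density of $V_{\geq t}$ given $V_{<t}$, namely the uniform law on orthonormal $(k-t+1)$-frames in $\textrm{Span}(v_{<t})^\perp$, and the denominator does not depend on $w$. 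Because $R$ fixes $\textrm{Span}(S_{t-1})\supseteq\textrm{Span}(v_{<t})$ pointwise and is orthogonal, it maps $\textrm{Span}(v_{<t})^\perp$ onto itself and acts there as an orthogonal transformation, so $p_0(Rw)=p_0(w)$. Hence it suffices to prove $\indicator{G_{<t}(V_{<t},w)}=\indicator{G_{<t}(V_{<t},Rw)}$ for every $w$.

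\textbf{Induction.} I would prove this, together with an auxiliary statement about the query sets, by induction on $r=0,1,\dots,t-1$, using the claim $(\star_r)$: ``the events $G_{\leq r}(V_{<t},w)$ and $G_{\leq r}(V_{<t},Rw)$ are equivalent, and on that event $X_{r'}(V_{<t},w)=X_{r'}(V_{<t},Rw)$ for all $r'\leq r+1$.'' The base case $r=0$ is immediate: $G_{\leq 0}$ is the sure event and $X_1$ is a fixed set independent of $V$. For the step, assume $(\star_{r-1})$. If $G_{\leq r-1}$ fails in one configuration it fails in both by $(\star_{r-1})$, so $G_{\leq r}$ fails in both and there is nothing to check; thus suppose $G_{\leq r-1}$ holds in both. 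By Lemma \ref{lem:Gksmallips} the event $G'_{\leq r-1}$ then holds in both, so property \eqref{eq:xtdecomposesuseful} makes $X_r$ a function of $v_1,\dots,v_{r-1}$ alone -- which $R$ leaves fixed since $r-1<t$ -- hence $X_r$ agrees in the two configurations, and $(\star_{r-1})$ already gives agreement of $X_{r'}$ for $r'\leq r-1$; therefore $S_{r-1}=X_{\leq r-1}\cup V_{\leq r-1}$ and the projection $P_{r-1}^\perp$ also agree. It remains to check that each inner product in the definition of $G_r$ is unchanged: for $x\in X_r$ we have $x\in X_{\leq t-1}\subseteq S_{t-1}$ and $\textrm{Span}(S_{r-1})\subseteq\textrm{Span}(S_{t-1})$, so $P_{r-1}^\perp x\in\textrm{Span}(S_{t-1})$ and $\normalized{P_{r-1}^\perp x}$ is fixed by both $R$ and $R^\top$; thus for $j\geq r$,
\[
\inner{\normalized{P_{r-1}^\perp x}}{Rv_j}=\inner{R^\top\normalized{P_{r-1}^\perp x}}{v_j}=\inner{\normalized{P_{r-1}^\perp x}}{v_j},
\]
while for $j<t$ the vector $v_j$ is itself unchanged. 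Hence $\indicator{G_r}$, and so $\indicator{G_{\leq r}}$, agrees in the two configurations. Finally, $G_{\leq r}$ (when it holds) implies $G'_{\leq r}$ by Lemma \ref{lem:Gksmallips}, so \eqref{eq:xtdecomposesuseful} makes $X_{r+1}$ a function of $v_1,\dots,v_r$ alone, again unchanged by $R$ since $r<t$; this completes $(\star_r)$. Taking $r=t-1$ yields $\indicator{G_{<t}(V_{<t},w)}=\indicator{G_{<t}(V_{<t},Rw)}$, which together with the reduction proves the lemma.

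\textbf{The main obstacle.} The only delicate point is the circularity between ``$X_r$ is a function of $v_1,\dots,v_{r-1}$ alone'' and ``$G_{<t}$ holds'': the decomposition \eqref{eq:xtdecomposesuseful} may be invoked only once $G'_{<r}$ is known to hold, and whether it holds depends on the very configuration being perturbed. The induction is arranged so that at step $r$ this reduction of $X_r$ is justified \emph{simultaneously} for $(V_{<t},w)$ and $(V_{<t},Rw)$, using that the two configurations agree on $G_{\leq r-1}$ and hence on $X_{\leq r-1}$. Everything else -- the nesting $S_{r-1}\subseteq S_{t-1}$, the inclusion $X_{\leq t-1}\subseteq S_{t-1}$, and the defining relation $Rw=R^\top w=w$ on $\textrm{Span}(S_{t-1})$ -- is routine bookkeeping.
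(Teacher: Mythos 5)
Your proof is correct and follows essentially the same route as the paper's: the Bayes-rule reduction to the invariance of the indicator $\indicator{G_{<t}}$ under $R$, followed by an induction on the layer index that exploits the decomposition property \eqref{eq:xtdecomposesuseful} together with the fact that $R$ fixes $\textrm{Span}(S_{t-1})$ pointwise so that $P_{r-1}^\perp x$ and $V_{<t}$ are unchanged. The only cosmetic difference is that you establish the biconditional $G_{<t}(V)\iff G_{<t}(RV)$ directly inside the joint induction $(\star_r)$, whereas the paper proves the forward implication under the standing assumption $G_{<t}(V)=1$ and remarks that the reverse is symmetric.
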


\begin{proof}[Proof of Lemma \ref{lem:keylowerboundlemma}]
This closely follows the proof of Lemma 4 \cite{carmon2017lower} and Lemma 4 \cite{woodworth2017lower}, with small modifications to account for the different setting.

Set $\alpha = \min\set{\frac{1}{4N},\,\frac{c}{2\parens{1+\sqrt{2N}}}}$. Then by Lemma \ref{lem:Gksmallips}, since $X_1,\dots,X_k$ satisfy the property \eqref{eq:xtdecomposesuseful}
\begin{equation}
\Prob{\forall t \leq k\ \forall x \in X_t\ \forall j \geq t\ \ \abs{\inner{x}{v_j}} \leq \frac{c}{2}} 
\geq
\Prob{G_{\leq k}} = \prod_{t \leq k}\ProbGiven{G_t}{G_{<t}}
\end{equation}
Focus on a single term in this product,
\begin{equation}
\mathbb{P}\left[ G_t\ \middle|\ G_{<t} \right] = \mathbb{E}_{V_{<t}}\left[\mathbb{P}\left[ G_t\ \middle|\ G_{<t}, V_{<t} \right]\right] \geq \inf_{V_{<t}} \mathbb{P}\left[ G_t\ \middle|\ G_{<t}, V_{<t}\right]
\end{equation}
For any particular $V_{<t}$,
\begin{align}
&\mathbb{P}\left[ G_t\ \middle|\ G_{<t}, V_{<t} \right]
= \ProbGiven{\forall x \in X_t\ \forall j \geq t\ \abs{\inner{\normalized{P_{t-1}^{\perp}x}}{v_j}} \leq \alpha}{G_{<t}, V_{<t}} \\
&\geq 1- \sum_{x \in X_t(V_{<t})} \sum_{j=t}^k \ProbGiven{\abs{\inner{\normalized{P_{t-1}^{\perp}x}}{v_j}} > \alpha}{G_{<t}, V_{<t}} \\
&\geq 1- \sum_{x \in X_t(V_{<t})} \sum_{j=t}^k \ProbGiven{\abs{\inner{\normalized{P_{t-1}^{\perp}x}}{\normalized{P_{t-1}^\perp v_j}}} > \alpha}{G_{<t}, V_{<t}}
\end{align}
Conditioned on $G_{<t}$ and $V_{<t}$, the set $X_t = X_t(V_{<t})$ is fixed, as is the set $S_{t-1}$ and therefore $P_{t-1}^{\perp}$, so the first term in the inner product is a fixed unit vector. By Lemma \ref{lem:symmetricconditionaldensity}, the conditional density of $v_j\ |\ G_{<t},V_{<t}$ is spherically symmetric within the span onto which $P_{t-1}^\perp$ projects. Therefore, $\normalized{P_{t-1}^\perp v_j}$ is distributed uniformly on the unit sphere in $\textrm{Span}\parens{S_{t-1}}^\perp$, which has dimension at least $m' := m - (t-1) - \sum_{r=1}^{t-1}\abs{X_{r}} \geq m - k + 1 - N$. 

The probability of a fixed vector and a uniform random vector on the unit sphere in $\R^{m'}$ having inner product more than $\alpha$ is proportional to the surface area of the ``end caps" of the sphere lying above and below circles of radius $\sqrt{1 - \alpha^2}$, which is strictly smaller than the surface area of a full sphere of radius $\sqrt{1 - \alpha^2}$. Therefore, for a given $x,v_j$
\begin{align} 
\ProbGiven{\abs{\inner{\normalized{P_{t-1}^{\perp}x}}{\normalized{P_{t-1}^\perp v_j}}} > \alpha}{G_{<t}, V_{<t}} 
&< \frac{\textrm{SurfaceArea}_{m'}(\sqrt{1 - \alpha^2})}{\textrm{SurfaceArea}_{m'}(1)} \\
&= \parens{\sqrt{1 - \alpha^2}}^{m'-1} \\
&\leq \expo{-\frac{(m'-1)\alpha^2}{2}} 
\end{align}
where we used that $1-x \leq \exp(-x)$. Finally, this holds for each $t$, $x \in X_t$, and $j \geq t$, so
\begin{align}
\Prob{G_{\leq k}}
&\geq \prod_{t \leq k}\inf_{V_{<t}} \mathbb{P}\left[ G_t\ \middle|\ G_{<t}, V_{<t} \right] \\
&\geq \parens{1 - kN \expo{-\frac{\parens{m - k - N}\alpha^2}{2}}}^k \\
&\geq 1 - k^2N\expo{-\frac{\alpha^2}{2}\max\set{32N^2,\ \frac{8(1+\sqrt{2N})^2}{c^2}}\log\parens{\frac{1}{2k^2N}}} \label{eq:weusedm} \\
&= \frac{1}{2}\label{eq:weusedalpha}
\end{align}
Where we used that $m \geq k+N+\max\set{32N^2,\ \frac{8(1+\sqrt{2N})^2}{c^2}}\log\parens{2k^2N}$ for \eqref{eq:weusedm}. For \eqref{eq:weusedalpha}, recall that we chose $\alpha = \min\set{\frac{1}{4N},\ \frac{c}{2(1+\sqrt{2N})}}$ so $\max\set{32N^2,\ \frac{8(1+\sqrt{2N})^2}{c^2}} = \frac{2}{\alpha^2}$.
\end{proof}

\subsection{Proof of Lemmas \ref{lem:Gksmallips} and \ref{lem:symmetricconditionaldensity}}\label{sec:vectorguessingdeferred}
\Gksmallips*
\begin{proof}
This closely follows the proof of Lemma 9 \cite{carmon2017lower}, with slight modification to account for the different problem setup.

For $t \leq k$ assume $G_{\leq t}$. For any $x \in X_t$ and $j \geq t$
\begin{align}
\abs{\inner{x}{v_j}} 
&\leq \norm{x}\abs{\inner{\normalized{x}}{P_{t-1}v_j}} + \norm{x}\abs{\inner{\normalized{x}}{P_{t-1}^{\perp}v_j}} \\
&\leq \norm{P_{t-1}v_j} + \abs{\inner{\frac{P_{t-1}^{\perp}x}{\norm{x}}}{v_j}} \\
&\leq \norm{P_{t-1}v_j}  + \abs{\inner{\normalized{P_{t-1}^{\perp}x}}{v_j}} \\
&\leq \norm{P_{t-1}v_j}  + \alpha \label{eq:ipxvj}
\end{align}
First, we decomposed $v_j$ into its $S_{t-1}$ and $S^\perp_{t-1}$ components and applied the triangle inequality. Next we used that $\norm{x} \leq 1$ and that the orthogonal projection operator $P_{t-1}^\perp$ is self-adjoint. Finally, we used that the projection operator is non-expansive and the definition of $G_t$.

Next, we prove by induction on $t$ that for all $t \leq k$ and $j \geq t$, the event $G_{\leq t}$ implies that $\norm{P_{t-1}v_j}^2 \leq 2\alpha^2\sum_{r=1}^{t-1}\abs{X_r}$.
As a base case ($t=1$), observe that, trivially, $\norm{P_{t-1}v_j}^2 = \norm{0v_j}^2 = 0$.
For the inductive step, fix any $t \leq k$ and $j \geq t$ and suppose that $G_{\leq t'} \implies \norm{P_{t'-1}v_j'}^2 \leq 2\alpha^2\sum_{r=1}^{t'-1}\abs{X_r}$ for all $t' < t$ and $j' \geq t'$. Let $\hat{P}_t$ project onto $\spn{S_t \cup X_{t+1}}$ (this includes $X_{t+1}$ in contrast with $P_t$) and let $\hat{P}_t^\perp$ project onto the orthogonal subspace. Since $\spn{X_1\cup X_2\cup\dots\cup X_{t-1}\cup V_{\leq t-1}} = S_{t-1}$,
\begin{equation} \label{eq:gramschmidtbasis}
\set{\normalized{P_{r-1}^\perp x}\,:\, r \leq t-1,\ x \in X_r} \cup \set{\normalized{\hat{P}_{r-1}^\perp v_r}\,:\, r \leq t-1}
\end{equation} 
is a (potentially over-complete) basis for $S_{t-1}$. Using the triangle inequality and $G_{< t}$, we can therefore expand
\begin{align}
\norm{P_{t-1} v_j}^2 
&= \sum_{r=1}^{t-1}\sum_{x\in X_r} \inner{\normalized{P_{r-1}^\perp x}}{v_j}^2
       + \sum_{r=1}^{t-1} \inner{\normalized{\hat{P}_{r-1}^\perp v_r}}{v_j}^2 \\
&\leq \alpha^2\sum_{r=1}^{t-1}\abs{X_r} + \sum_{r=1}^{t-1} \frac{1}{\norm{\hat{P}_{r-1}^\perp v_r}^2}\inner{\hat{P}_{r-1}^\perp v_r}{v_j}^2 \label{eq:needtoboundsecondterm}
\end{align}
We must now bound the second term of \eqref{eq:needtoboundsecondterm}. Focusing on the inner product in the numerator for one particular $r < t$:
\begin{align} 
\abs{\inner{\hat{P}_{r-1}^\perp v_r}{v_j}} 
&= \abs{\inner{v_r}{v_j} - \inner{\hat{P}_{r-1}v_r}{v_j}} \\
&= \abs{\inner{\hat{P}_{r-1}v_r}{v_j}} \\
&\leq \abs{\inner{P_{r-1}v_r}{v_j}} + \sum_{x\in X_r}\abs{\inner{\normalized{P_{r-1}^\perp x}}{v_r}\inner{\normalized{P_{r-1}^\perp x}}{v_j}} \\
&\leq \norm{P_{r-1}v_r}\norm{P_{r-1}v_j} + \abs{X_r}\alpha^2 \label{eq:cauchyschwarzandgt}\\
&\leq 2\alpha^2\sum_{i=1}^{r-1}\abs{X_i} + \abs{X_r}\alpha^2  \\
&\leq \frac{\alpha}{2} \label{eq:boundinnerproduct}
\end{align}
First, we used that $\hat{P}_{r-1}^\perp = I - \hat{P}_{r-1}$, then that $v_r \perp v_j$. Next, we applied the definition of $\hat{P}_{r-1}$ and the triangle inequality. To get \eqref{eq:cauchyschwarzandgt} we use the Cauchy-Schwarz inequality on the first term, and the definition of $G_r$ for the second. Finally, we use the inductive hypothesis and that $\alpha \leq \frac{1}{4N}$.

We have now upper bounded the inner products in the second term of \eqref{eq:needtoboundsecondterm}, and it remains to lower bound the norm in the denominator. We can rewrite
\begin{align}
\norm{\hat{P}_{r-1}^\perp v_r}^2 
&= \inner{\hat{P}_{r-1}^\perp v_r}{v_r} \\
&= \inner{v_r}{v_r} - \inner{\hat{P}_{r-1}v_r}{v_r} \\
&\geq 1 - \inner{P_{r-1}v_r}{v_r} - \sum_{x\in X_r} \inner{\normalized{P_{r-1}^\perp x}}{v_r}^2 \\
&\geq 1 - \norm{P_{r-1}v_r}^2 - \abs{X_r} \alpha^2 \\
&\geq 1 - 2\alpha^2\sum_{i=1}^{r-1}\abs{X_i} - \abs{X_r} \alpha^2 \\
&\geq \frac{1}{2} \label{eq:denominatornorm}
\end{align}
Here we again used $\hat{P}_{r-1}^\perp = I - \hat{P}_{r-1}$ followed by an (over)expansion of $\hat{P}_{r-1}$. The remaining steps follow from the inductive hypothesis and fact that $\alpha \leq \frac{1}{4N}$. Combining \eqref{eq:denominatornorm} with \eqref{eq:boundinnerproduct} and returning to \eqref{eq:needtoboundsecondterm}, we have that
\begin{align}
\norm{P_{t-1} v_j}^2 
&\leq \alpha^2\sum_{r=1}^{t-1}\abs{X_r} + \sum_{r=1}^{t-1} \frac{1}{\norm{\hat{P}_{r-1}^\perp v_r}^2}\inner{\hat{P}_{r-1}^\perp v_r}{v_j}^2 \\
&\leq \alpha^2\sum_{r=1}^{t-1}\abs{X_r} + \sum_{r=1}^{t-1} \alpha^2 \\
&\leq 2\alpha^2\sum_{r=1}^{t-1}\abs{X_r}
\end{align}
Therefore, for each  $t \leq k$ and $j \geq t$ an upper bound $\norm{P_{t-1} v_j}^2 \leq 2\alpha^2\sum_{r=1}^{t-1}\abs{X_r}$. Returning now to \eqref{eq:ipxvj}, we have that for any $t \leq k$, $x \in X_t$, and $j \geq t$ the event $G_{\leq t}$ implies
\begin{align}
\abs{\inner{x}{v_j}} 
&\leq \norm{P_{t-1}v_j} + \alpha \\
&\leq \alpha\parens{1 + \sqrt{2\sum_{r=1}^{t-1}\abs{X_r}}} \\
&\leq \frac{c}{2}
\end{align}
where we used that $\alpha \leq \frac{c}{2\parens{1+\sqrt{2N}}}$
\end{proof}

\symmetricdensity*
\begin{proof}
This closely follows the proof of Lemma 11 \cite{carmon2017lower}.

First, we apply Bayes' rule to each density and use the fact that $RV_{<t} = V_{<t}$:
\begin{align}
p\parens{V_{\geq t}\ \middle|\ G_{<t}, V_{<t}} &=
\frac{\ProbGiven{G_{<t}}{V}\ p(V)}{\ProbGiven{G_{<t}}{V_{<t}}\ p(V_{<t})} \label{eq:guessinglemmaunrotated}\\
p\parens{RV_{\geq t}\ \middle|\ G_{<t}, V_{<t}} &=
\frac{\ProbGiven{G_{<t}}{RV}\ p(RV)}{\ProbGiven{G_{<t}}{V_{<t}}\ p(V_{<t})} \label{eq:guessinglemmarotated}
\end{align}
Since $V$ has a spherically symmetric marginal distribution, $p(V) = p(RV)$. Therefore, it only remains to show that $\ProbGiven{G_{<t}}{V} = \ProbGiven{G_{<t}}{RV}$. The event $G_{<t}$ is determined by $V$ or by $RV$, thus both probabilities are either $0$ or $1$, so it suffices to show $\ProbGiven{G_{<t}}{V} = 1 \iff \ProbGiven{G_{<t}}{RV} = 1$.

Assume first $\ProbGiven{G_{<t}}{V} = 1$. Then for each $r < t$, $x \in X_r$, and $j \geq r$ $\abs{\inner{\normalized{P_{r-1}^\perp x}}{v_j}} \leq \alpha$, and each set $X_r$ is a deterministic function of $V_{<r}$. Also, observe that for any $x \in X_r$ and $j \geq r$, 
\begin{equation}
\abs{\inner{\normalized{P_{r-1}^\perp x}}{Rv_j}} = \abs{\inner{\frac{R^\top P_{r-1}^\perp x}{\norm{P_{r-1}^\perp x}}}{v_j}} = \abs{\inner{\normalized{P_{r-1}^\perp x}}{v_j}} \leq \alpha
\end{equation}
where we used that $P_{r-1}^\perp x \in \textrm{Span}(S_{r}) \subseteq \textrm{Span}(S_{t-1})$ so $R^\top P_{r-1}^\perp x = P_{r-1}^\perp x$. Therefore, it suffices to show that the sequence $X_1(RV),...,X_t(RV) = X_1(V),...,X_t(V)$ when $\ProbGiven{G_{<t}}{V} = 1$. We prove this by induction.

For the base case, by definition $X_1(RV) = X_1 = X_1(V)$. For the inductive step, suppose now that $X_{r'}(RV) = X_{r'}(V)$ for each $r' < r$. This, plus the fact that $\ProbGiven{G_{<t}}{V} = 1 \implies \ProbGiven{G_{<r}}{V} = 1$ together imply that $\ProbGiven{G_{<r}}{RV} = 1$. Thus, $X_r(RV) = X_r(RV_{<r}) = X_r(V_{<r})$. Therefore, we conclude that $\ProbGiven{G_{<t}}{V} = 1 \implies \ProbGiven{G_{<t}}{RV} = 1$, the reverse implication can be proven with a similar argument.
\end{proof}

\section{Proof of Theorem \ref{thm:gradonlyLB}}\label{sec:gradonlyappendix}
\gradonlyLB*
\begin{proof}
Assume for now that $B = 1$, the lower bound can be established for other values of $B$ by scaling inputs to our construction.
Let
\begin{equation}
\ell = \min\set{L,\ \frac{H}{10(D+1)^{1.5}}} \qquad\qquad \eta = 10(D+1)^{1.5}\ell
\end{equation}
and consider the following $\ell$-Lipschitz function:
\begin{equation}
\tilde{f}(x) = \max_{1\leq r \leq D+1} \ell v_r^\top x - \frac{5\ell^2(r-1)}{\eta}
\end{equation}
where the vectors $v_1,\dots,v_{D+1}$ are an orthonormal set drawn uniformly at random from the unit sphere in $\R^m$. We use the $\eta$-Moreau envelope \cite{bauschke2017convex} of this function in order to prove our lower bound:
\begin{align}
f(x) 
&= \inf_{y}\set{\tilde{f}(y) + \frac{\eta}{2}\norm{y-x}^2} 
\end{align}
The random draw of $V$ defines a distribution over functions $f$. We will lower bound the expected suboptimality of any \emph{deterministic} optimization algorithm's output and apply Yao's minimax principle at the end of the proof.

This function has the following properties:
\begin{restatable}{lemma}{moreausmooth} \label{lem:moreausmooth}
The function $f$ is convex, $\ell$-Lipschitz, and $\eta$-smooth, with $\ell \leq L$ and $\eta \leq H$.
\end{restatable}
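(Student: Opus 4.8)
The plan is to verify three claims in turn for the Moreau envelope $f$ of $\tilde f$: convexity, $\ell$-Lipschitzness, and $\eta$-smoothness, together with the numerical bounds $\ell \le L$ and $\eta \le H$. The last two are immediate: $\ell = \min\{L, H/(10(D+1)^{1.5})\} \le L$ by definition, and since $\ell \le H/(10(D+1)^{1.5})$ we get $\eta = 10(D+1)^{1.5}\ell \le H$. Convexity of $f$ is standard: $\tilde f$ is a pointwise maximum of affine functions, hence convex, and the Moreau envelope (infimal convolution of a convex function with the convex quadratic $\tfrac{\eta}{2}\|\cdot\|^2$) is convex; I would cite the relevant fact from \cite{bauschke2017convex}.

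For the Lipschitz and smoothness claims I would invoke the well-known regularization properties of the Moreau envelope. First, $\tilde f$ is $\ell$-Lipschitz because each affine piece $x \mapsto \ell v_r^\top x - 5\ell^2(r-1)/\eta$ has gradient $\ell v_r$ with $\|\ell v_r\| = \ell$ (the $v_r$ are unit vectors), and a pointwise max of $\ell$-Lipschitz functions is $\ell$-Lipschitz. Then the standard fact (Proposition 12.29 or the discussion of Moreau envelopes in \cite{bauschke2017convex}) is that for a convex function, the $\eta$-Moreau envelope is everywhere differentiable with $\nabla f(x) = \eta(x - \mathrm{prox}_{\tilde f/\eta}(x))$, that $f$ inherits the $\ell$-Lipschitz bound of $\tilde f$ (since $\nabla f(x)$ is a subgradient of $\tilde f$ at the prox point, hence has norm $\le \ell$), and that $\nabla f$ is $\eta$-Lipschitz because $x \mapsto \mathrm{prox}_{\tilde f/\eta}(x)$ is firmly nonexpansive — in particular $1$-Lipschitz — so $\|\nabla f(x) - \nabla f(y)\| \le \eta\|x-y\| + \eta\|\mathrm{prox}(x) - \mathrm{prox}(y)\|$... more carefully, $\nabla f = \eta(I - \mathrm{prox})$ and $I - \mathrm{prox}$ is also firmly nonexpansive, hence $1$-Lipschitz, giving $\|\nabla f(x) - \nabla f(y)\| \le \eta \|x - y\|$, i.e.\ $\eta$-smoothness.

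I do not expect any genuine obstacle here; this lemma is purely a bookkeeping statement assembling textbook properties of Moreau envelopes plus the arithmetic that the chosen $\ell,\eta$ respect the constraints. The only mild subtlety is being careful that the Lipschitz constant of $f$ is $\ell$ and not something larger: this follows because $\nabla f(x) \in \partial \tilde f(\mathrm{prox}_{\tilde f/\eta}(x))$ and every subgradient of the $\ell$-Lipschitz convex function $\tilde f$ has norm at most $\ell$. If one prefers a self-contained argument, one can note that $f(x) = \inf_y\{\tilde f(y) + \tfrac{\eta}{2}\|y-x\|^2\} \le \inf_y\{\tilde f(y) + \tfrac{\eta}{2}\|y-x'\|^2\} + \tilde f\text{-Lipschitz adjustment}$ — but citing \cite{bauschke2017convex} is cleaner. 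The whole proof of Lemma~\ref{lem:moreausmooth} should be a short paragraph.
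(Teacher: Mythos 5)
Your proposal is correct and follows essentially the same route as the paper: observe that $\tilde f$ is a max of $\ell$-Lipschitz affine pieces (hence convex and $\ell$-Lipschitz), invoke Proposition 12.29 of Bauschke--Combettes for the $\eta$-smoothness and the gradient formula $\nabla f(x) = \eta\bigl(x - \mathrm{prox}_{\tilde f}(x,\eta)\bigr)$, and then note that this gradient lies in $\partial\tilde f$ at the prox point so its norm is at most $\ell$. Your additional remarks on firm nonexpansiveness of $I - \mathrm{prox}$ and the explicit arithmetic checks $\ell\le L$, $\eta\le H$ are fine elaborations of steps the paper leaves to the cited reference or to the definitions, but they do not change the structure of the argument.
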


Furthermore, optimizing $f$ is equivalent to ``finding'' the vectors $v_1,\dots,v_{D+1}$. In particular, until a point that has a substantial inner product with all of $v_1,\dots,v_{D+1}$ is found, the algorithm will remain far from the minimum:
\begin{restatable}{lemma}{gradonlysubopt} \label{lem:gradonlysubopt}
For any $H,L > 0$, $D \geq 1$, and orthonormal $v_1,...,v_{D+1}$, for any $x$ with $\abs{v_{D+1}^\top x}\leq \frac{\ell}{\eta}$
\[
f(x) - \min_{x:\norm{x}\leq 1} f(x) \geq \min\set{\frac{L}{2\sqrt{D+1}},\ \frac{H}{20(D+1)^2}}
\]
\end{restatable}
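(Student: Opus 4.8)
## Proof Proposal for Lemma~\ref{lem:gradonlysubopt}

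The plan is to compute the minimum value of $f$ explicitly and then show that any point $x$ with $\abs{v_{D+1}^\top x}\leq \ell/\eta$ has function value bounded away from it. Since $f$ is the $\eta$-Moreau envelope of $\tilde f$, the two functions share a minimizer and a minimum value, so first I would analyze $\min \tilde f$ over the ball. Write $\tilde f(x) = \max_{1\le r\le D+1}\parens{\ell v_r^\top x - 5\ell^2(r-1)/\eta}$. The natural candidate minimizer is the point that equalizes all the affine pieces while staying in the unit ball, i.e.\ a point of the form $x^\star = -a\sum_{r=1}^{D+1} v_r$ for an appropriate $a>0$; I would verify that at such a point each piece takes a common negative value, compute that value (it will be on the order of $-\ell/\sqrt{D+1}$ after balancing $a$ against the $\norm{x^\star}\le 1$ constraint), and confirm this is optimal by a subgradient/duality argument (any direction of descent would have to decrease some $v_r^\top x$ term that is currently active). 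This pins down $\min_{\norm{x}\le 1} f = \min_{\norm{x}\le 1}\tilde f \approx -c\,\ell/\sqrt{D+1}$ for an explicit constant.

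Next I would lower bound $f(x)$ for a point $x$ with small $\abs{v_{D+1}^\top x}$. The key observation is that the $r=D+1$ piece of $\tilde f$ evaluated near $x$ is $\ell v_{D+1}^\top y - 5\ell^2 D/\eta$, and because of the constraint $\abs{v_{D+1}^\top x}\le \ell/\eta$, any $y$ achieving the infimum in the Moreau envelope cannot have moved $v_{D+1}^\top y$ very far from $v_{D+1}^\top x$: the quadratic penalty $\frac{\eta}{2}\norm{y-x}^2$ controls $\frac\eta2(v_{D+1}^\top(y-x))^2$. So I would lower bound $f(x) = \inf_y\set{\tilde f(y)+\frac\eta2\norm{y-x}^2} \ge \inf_y\set{\ell v_{D+1}^\top y - \frac{5\ell^2 D}{\eta} + \frac\eta2 (v_{D+1}^\top(y-x))^2}$, reduce to a one-dimensional problem in the scalar $s = v_{D+1}^\top y$, minimize the quadratic $\ell s + \frac\eta2(s - v_{D+1}^\top x)^2$ in $s$ to get value $\ell\, v_{D+1}^\top x - \ell^2/(2\eta)$, and then use $v_{D+1}^\top x \ge -\ell/\eta$ to conclude $f(x) \ge -5\ell^2 D/\eta - \ell^2/(2\eta) - \ell^2/\eta = -\Omega(\ell^2 D/\eta)$.

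Subtracting, the suboptimality is at least $\Omega(\ell/\sqrt{D+1}) - O(\ell^2 D/\eta)$; with the choice $\eta = 10(D+1)^{1.5}\ell$ the second term is $O(\ell/\sqrt{D+1})$ with a small constant, so after checking the constants it dominates the negative correction and leaves $\Omega(\ell/\sqrt{D+1})$. Finally I would substitute $\ell = \min\set{L,\ H/(10(D+1)^{1.5})}$: in the first case $\ell/\sqrt{D+1} = L/\sqrt{D+1}$, and in the second $\ell/\sqrt{D+1} = H/(10(D+1)^2)$, recovering the stated bound $\min\set{L/(2\sqrt{D+1}),\ H/(20(D+1)^2)}$ up to the claimed constants, and then undo the $B=1$ normalization by scaling.

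I expect the main obstacle to be the careful bookkeeping of constants in two places: establishing that $\min\tilde f \le -c\ell/\sqrt{D+1}$ with a large enough $c$ (which requires checking that the equalizing point $x^\star$ genuinely lies in the unit ball and is genuinely optimal, not just a critical point of the max), and then ensuring the $O(\ell^2 D/\eta)$ correction terms coming from the Moreau envelope and the $5\ell^2(r-1)/\eta$ offsets are strictly smaller than this leading term after plugging in $\eta = 10(D+1)^{1.5}\ell$. The structural steps---Moreau envelope preserves the minimum, reduction to a scalar quadratic in $v_{D+1}^\top y$---are routine; the risk is entirely in whether the constants leave enough slack, which is why the factor of $10$ in the definition of $\eta$ was chosen.
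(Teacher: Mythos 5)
Your proposal matches the paper's proof in all essential steps: upper bound $\min f$ by evaluating $\tilde f$ at $-\sum_r v_r/\sqrt{D+1}$ (using $f \le \tilde f$), then lower bound $f(x)$ by showing the quadratic penalty in the Moreau envelope prevents $v_{D+1}^\top y$ from drifting far from $v_{D+1}^\top x$, and finally balance constants with $\eta = 10(D+1)^{1.5}\ell$. The only differences are cosmetic: you reduce to a one-dimensional quadratic in $v_{D+1}^\top y$ and complete the square, whereas the paper instead uses that $\eta(x-y^*)\in\partial\tilde f(y^*)$ together with $\ell$-Lipschitzness to get $\norm{x-y^*}\le\ell/\eta$ (giving comparable constants); and your opening claim that $f$ and $\tilde f$ ``share a minimizer and a minimum value'' is only true unconstrained (here both are $-\infty$ since the $v_r$ are orthonormal) and is not what you actually need---$f\le\tilde f$ pointwise already gives the one-sided bound $\min_{\norm{x}\le 1} f \le \tilde f(x^\star)$, so this harmless overclaim, and the proposed optimality verification of $x^\star$, can simply be dropped.
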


The function also has the property that if $x$ has a small inner product with $v_t,\dots,v_{D+1}$, then the gradient oracle will reveal little information about $f$ when queried at $x$:
\begin{restatable}{lemma}{gradspan} \label{lem:gradspan}
For any $x$ with $\abs{\inner{x}{v_r}} \leq \frac{\ell}{\eta}$ for all $r \geq t$,
both the function value $f(x)$ and gradient $\nabla f(x)$ can be calculated from $v_1,\dots,v_t$ only.
\end{restatable}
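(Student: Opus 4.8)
The plan is to reduce everything to a statement about the proximal point defining the Moreau envelope. Since $f$ is the $\eta$-Moreau envelope of $\tilde{f}$, it is differentiable (cf.\ Lemma \ref{lem:moreausmooth} and standard properties of Moreau envelopes \cite{bauschke2017convex}) with $\nabla f(x) = \eta(x - y^*)$ and $f(x) = \tilde{f}(y^*) + \frac{\eta}{2}\norm{y^*-x}^2$, where $y^* := \argmin_y \set{\tilde{f}(y) + \frac{\eta}{2}\norm{y-x}^2}$ is the unique prox point. So it suffices to show that, under the hypothesis $\abs{\inner{x}{v_r}} \le \ell/\eta$ for all $r \ge t$, the point $y^*$ is a function of $x$ and $v_1,\dots,v_t$ only.

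To that end I would introduce the truncated function $\tilde{f}_t(y) = \max_{1\le r\le t}\parens{\ell v_r^\top y - \frac{5\ell^2(r-1)}{\eta}}$, which depends on $v$ only through $v_1,\dots,v_t$, and let $y^*_t := \argmin_y \set{\tilde{f}_t(y) + \frac{\eta}{2}\norm{y-x}^2}$ be its prox point; the goal becomes to prove $y^* = y^*_t$. First I would record two easy properties of $y^*_t$: (i) because $\tilde{f}_t(y)$ depends on $y$ only through the inner products $v_1^\top y,\dots,v_t^\top y$, decomposing $\norm{y-x}^2$ along the orthonormal directions $v_1,\dots,v_{D+1}$ and their orthogonal complement shows the minimizer perturbs $x$ only within $\textrm{Span}(v_1,\dots,v_t)$; in particular $v_r^\top y^*_t = v_r^\top x$ for every $r > t$; and (ii) by first-order optimality $\eta(x - y^*_t)$ is a subgradient of the $\ell$-Lipschitz function $\tilde{f}_t$ at $y^*_t$, hence $\norm{y^*_t - x} \le \ell/\eta$.

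Next I would show that none of the ``extra'' affine pieces $r > t$ is active at $y^*_t$, i.e.\ $\tilde{f}(y^*_t) = \tilde{f}_t(y^*_t)$. For $r > t$ we have $v_r^\top y^*_t = v_r^\top x$ with $\abs{v_r^\top x} \le \ell/\eta$ and $r - 1 \ge t$, so the $r$-th piece is at most $\frac{\ell^2}{\eta}(1 - 5t)$; meanwhile $\abs{v_t^\top y^*_t} \le \abs{v_t^\top x} + \norm{y^*_t - x} \le 2\ell/\eta$ by the hypothesis at $r=t$ together with property (ii), so the single $t$-th piece of $\tilde{f}_t$ already gives $\tilde{f}_t(y^*_t) \ge \frac{\ell^2}{\eta}(3 - 5t)$, which strictly exceeds the bound on every extra piece. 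With this in hand, $\tilde{f}(y^*_t) + \frac{\eta}{2}\norm{y^*_t - x}^2 = \tilde{f}_t(y^*_t) + \frac{\eta}{2}\norm{y^*_t - x}^2 = \min_y \set{\tilde{f}_t(y) + \frac{\eta}{2}\norm{y-x}^2} \le \min_y \set{\tilde{f}(y) + \frac{\eta}{2}\norm{y-x}^2}$, and since $\tilde{f} \ge \tilde{f}_t$ pointwise the reverse inequality is automatic; hence $y^*_t$ attains the minimum defining $f(x)$ and, by strong convexity, equals $y^*$. Since $y^*_t$, $f(x) = \tilde{f}_t(y^*_t) + \frac{\eta}{2}\norm{y^*_t-x}^2$, and $\nabla f(x) = \eta(x - y^*_t)$ are all determined by $x$ and $v_1,\dots,v_t$, the lemma follows.

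The hard part will be managing properties (i) and (ii) in the presence of the fact that we are given \emph{no} control on $v_r^\top x$ for $r < t$: the lower bound on $\tilde{f}_t(y^*_t)$ must be extracted from a coordinate $v_r^\top y^*_t$ with $r \ge t$ (where the hypothesis applies and $\norm{y^*_t - x}$ is already controlled), rather than by comparing to $\tilde{f}_t(x)$, which could be arbitrarily large. Once the argument is organized this way, the generous constant $5$ in the offsets $\frac{5\ell^2(r-1)}{\eta}$ leaves ample slack, and the remaining estimates are routine.
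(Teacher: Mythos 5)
Your proposal is correct and follows essentially the same route as the paper's proof: both arguments identify the prox point $y^*$ defining the Moreau envelope, exploit that the prox cannot move more than $\ell/\eta$ away from $x$, use the hypothesis on $\abs{v_r^\top x}$ for $r\ge t$ to bound $\abs{v_r^\top y^*}$, and then use the stagger in the offsets $\tfrac{5\ell^2(r-1)}{\eta}$ to conclude that no affine piece with index $r>t$ can be active at the prox point, so both $f(x)$ and $\nabla f(x)$ depend only on $v_1,\dots,v_t$. The one organizational difference is that the paper starts from the full prox $y^*=\mathrm{prox}_{\tilde f}(x,\eta)$ and shows the $\argmax$ there lies in $\{1,\dots,t\}$, then asserts that the full and truncated minimizations coincide; you instead start from the truncated prox $y^*_t=\mathrm{prox}_{\tilde f_t}(x,\eta)$ and prove directly that it also minimizes the full objective. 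Your ordering makes the ``coincidence of minimizers'' step a one-line sandwich ($\tilde f\ge\tilde f_t$ pointwise plus $\tilde f(y^*_t)=\tilde f_t(y^*_t)$), whereas the paper's direction strictly requires the additional (unstated but easy) observation that the inequalities at $y^*$ are strict, so $\tilde f=\tilde f_t$ on a neighborhood of $y^*$ and local optimality of $y^*$ for the truncated problem upgrades to global optimality by convexity. Your property (i) is also a minor refinement: it gives $v_r^\top y^*_t = v_r^\top x$ exactly for $r>t$, so the $\ell/\eta$ bound for those indices comes for free rather than with a factor $2$ slack; this is harmless here given the generous constant $5$ in the offsets. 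All the numerics check out.
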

In Appendix \ref{sec:vectorguessing}, we studied the situation where orthonormal $v_1,\dots,v_{D+1}$ are chosen uniformly at random and a sequence of sets of vectors $X_1,\dots,X_{D+1}$ are generated as
\begin{equation}\label{eq:decomposition1}
X_t\parens{V} = X_t\parens{V_{<t}\indicator{G'_{<t}} +  V\indicator{\lnot G'_{<t}}}
\end{equation}
where
\begin{equation}
G'_{<t} = \left\llbracket  \forall r < t,\ \forall x \in X_r,\ \forall j\geq r\ \abs{\inner{x}{v_j}} \leq \frac{c}{2} \right\rrbracket
\end{equation}
Take $c = \frac{2\ell}{\eta}$ and consider the dependency graph. Let $X_1$ be the set of queries made in vertices at depth 1 in the graph (i.e.~they have no parents). Let $X_2$ be the set of queries made in vertices at depth 2 in the graph (i.e.~their parents correspond to the queries in $X_1$). Continue in this way for each $t \leq D$, and let $X_{D+1} = \set{\hat{x}}$ correpond to the algorithm's output, which is allowed to depend on all queries and oracle responses in the graph, and thus has depth $D+1$.

Supposing $G'_{<t}$, for all queries $x \in X_1\cup\dots\cup X_{t-1}$ and for all $r \geq t-1$ we have $\abs{\inner{x}{v_r}} \leq \frac{c}{2} = \frac{\ell}{\eta}$. Therefore, by Lemma \ref{lem:gradspan} all of the function evaluations and gradients returned by the stochastic gradient oracle are calculable from $v_1,\dots,v_{t-1}$ only. Therefore, all of the queries in $X_t$ are a deterministic function of $V_{<t}$ (since we are currently considering only deterministic optimization algorithms), so $X_t$ satisfies the required decomposition property \eqref{eq:decomposition1}. Finally, the queries are required to be in the domain of $f$, thus they will have norm bounded by $1$.

Therefore, by Lemma \ref{lem:keylowerboundlemma}, when the dimension
\begin{align}
m 
&\geq D+1+N+\max\set{32N^2,\ 200\parens{D+1}^3(1+\sqrt{2N})^2}\log\parens{2(D+1)^2N} 
\end{align}
with probability $1/2$, all $x \in X_1\cup\dots\cup X_{t+1}$ including the algorithm's output $\hat{x}$ satsify $\abs{\inner{x}{v_{D+1}}} \leq \frac{\ell}{\eta}$ so by Lemma \ref{lem:gradonlysubopt}
\begin{equation}
f(\hat{x}) - \min_{x:\norm{x}\leq 1} f(x) \geq \min\set{\frac{L}{2\sqrt{D+1}},\ \frac{H}{20(D+1)^2}}
\end{equation}
Therefore, by Yao's minimax principle for any randomized algorithm $\A$
\begin{multline}
\max_{V}\mathbb{E}_{\hat{X}\sim\A}\left[ f(\hat{X})\right] - \min_{x:\norm{x}\leq 1} f(x) \geq 
\min_{\textrm{deterministic }\A}\mathbb{E}_{V}\left[ f(\hat{x}) \right] - \min_{x:\norm{x}\leq 1} f(x)  \\ \geq \min\set{\frac{L}{4\sqrt{D+1}},\ \frac{H}{40(D+1)^2}}
\end{multline}
The statistical term $\frac{L}{8\sqrt{N}}$ follows from Lemma \ref{lem:statisticalLB}.
\end{proof}

\subsection{Deferred proofs} \label{sec:gradonlydeferred}
\moreausmooth*
\begin{proof}
Since $\tilde{f}$ is the maximum of $\ell$-Lipschitz affine functions, it is convex and $\ell$-Lipschitz. Furthermore, by Proposition 12.29 \cite{bauschke2017convex}, $f$, the $\eta$-Moreau Envelope of $\tilde{f}$ is $\eta$-smooth and
\begin{equation}
\nabla f(x) = \eta\parens{x - \argmin_{y} \tilde{f}(y) + \frac{\eta}{2}\norm{y - x}^2}
\end{equation}
The minimizing $y$ satisfies that $\eta(x-y) \in \partial \tilde{f}(y)$ (where $\partial \tilde{f}(y)$ denotes the set of subgradients of $\tilde{f}$ at $y$), and since $\tilde{f}$ is $\ell$-Lipschitz this implies that $\norm{\nabla f(x)} \leq \ell$.
\end{proof}

\gradonlysubopt*
\begin{proof}
First
\begin{align}
\min_{x:\norm{x}\leq 1} f(x) 
\leq f\parens{-\sum_{r=1}^{D+1} \frac{v_r}{\sqrt{D+1}}}
\leq \tilde{f}\parens{-\sum_{r=1}^{D+1} \frac{v_r}{\sqrt{D+1}}}
\leq -\frac{\ell}{\sqrt{D+1}}
\label{eq:gradonlyksubopt}
\end{align}

Now, for an arbitrary point $x$ such that $\abs{v_{D+1}^\top x}\leq\frac{\ell}{\eta} = \frac{1}{10(D+1)^{1.5}}$, consider
\begin{align}
y^* = \textrm{prox}_{\tilde{f}}(x,\eta) = \argmin_{y}\set{\max_{1\leq r \leq D+1} \parens{\ell v_r^\top y - \frac{5\ell^2(r-1)}{\eta}} + \frac{\eta}{2}\norm{y-x}^2}
\end{align}
Since $y^*$ is the minimizer, $\eta(x-y^*) \in \partial \tilde{f}(y^*)$
and since $\tilde{f}$ is $\ell$-Lipschitz, $\norm{x-y^*} \leq \frac{\ell}{\eta}$.
Thus $v_{D+1}^\top y^* \geq -\frac{2\ell}{\eta}$ and
\begin{align}
f(x) 
&= \tilde{f}(y^*) + \frac{\eta}{2}\norm{y^*-x}^2 \\
&= \max_{1\leq r \leq D+1} \parens{\ell v_r^\top y^* - \frac{5\ell^2(r-1)}{\eta}} + \frac{\eta}{2}\norm{y^*-x}^2 \\
&\geq \ell v_{D+1}^\top y^* - \frac{5\ell^2D}{\eta} \\
&\geq -\frac{2\ell^2}{\eta} - \frac{5\ell^2D}{\eta} \\
&\geq - \frac{5\ell^2(D+1)}{\eta} \label{eq:gradonlytsubopt}
\end{align}
Combining \eqref{eq:gradonlyksubopt} and \eqref{eq:gradonlytsubopt}, 
for any $x$ such that $\abs{v_{D+1}^\top x}\leq\frac{\ell}{\eta}$
\begin{align}
f(x) - \min_{x:\norm{x}\leq 1} f(x)
\geq \frac{\ell}{\sqrt{D+1}} -\frac{5\ell^2(D+1)}{\eta} 
= \min\set{\frac{L}{2\sqrt{D+1}},\ \frac{H}{20(D+1)^2}}
\end{align}
\end{proof}

\gradspan*
\begin{proof}
Let $x$ be a point such that $\abs{v_r^\top x} \leq \frac{\ell}{\eta}$ for all $r \geq t$.
By Proposition 12.29 \cite{bauschke2017convex}
\begin{equation}\label{eq:gradspanystar}
\nabla f(x) = \eta\parens{x - \textrm{prox}_{\tilde{f}}(x,\eta)}
\end{equation}
Since $f$ is $\ell$-Lipschitz (Lemma \ref{lem:moreausmooth}), $\norm{x-\textrm{prox}_{\tilde{f}}(x,\eta)} \leq \frac{\ell}{\eta}$. Consequently, for $y^* = \textrm{prox}_{\tilde{f}}(x,\eta)$ we have $\abs{v_r^\top y^*} \leq \frac{2\ell}{\eta}$ for all $r \geq t$.
Furthermore,   
\begin{equation} \label{eq:gradonlyargmax}
\nabla f(x) = \eta(x - y^*) \in \textrm{conv}\set{\ell v_r\,:\, r \in \argmax_{1\leq r \leq D+1} \parens{\ell v_r^\top y^* - \frac{5\ell^2(r-1)}{\eta}}}
\end{equation}
For any $r > t$
\begin{equation}\label{eq:gradonlymaxlow}
\ell v_r^\top y^* - \frac{5\ell^2(r-1)}{\eta} \leq \frac{2\ell^2}{\eta} - \frac{5\ell^2(r-1)}{\eta} = - \frac{5\ell^2\parens{r-\frac{7}{5}}}{\eta}
\end{equation}
Whereas
\begin{equation}\label{eq:gradonlymaxhigh}
\ell v_{t}^\top y^* - \frac{5\ell^2(t-1)}{\eta} \geq -\frac{2\ell^2}{\eta} - \frac{5\ell^2(t-1)}{\eta} = - \frac{2\ell^2\parens{t-\frac{3}{5}}}{\eta}
\end{equation}
For any $r > t$ \eqref{eq:gradonlymaxlow} is less than \eqref{eq:gradonlymaxhigh}, thus no $r > t$ can be in the $\argmax$ in \eqref{eq:gradonlyargmax}. Therefore, using only $v_1,\dots,v_t$ we can calculate
\begin{align}
f(x) 
&= \inf_{y}\set{\max_{1\leq r \leq D+1} \parens{\ell v_r^\top y - \frac{5\ell^2(r-1)}{\eta}} + \frac{\eta}{2}\norm{y-x}^2} \\
&= \inf_{y}\set{\max_{1\leq r \leq t} \parens{\ell v_r^\top y - \frac{5\ell^2(r-1)}{\eta}} + \frac{\eta}{2}\norm{y-x}^2} \\
\end{align}
and
\begin{align}
\textrm{prox}_{\tilde{f}}(x,\eta)
&= \argmin_{y}\set{\max_{1\leq r \leq D+1} \parens{\ell v_r^\top y - \frac{5\ell^2(r-1)}{\eta}} + \frac{\eta}{2}\norm{y-x}^2} \\
&= \argmin_{y}\set{\max_{1\leq r \leq t} \parens{\ell v_r^\top y - \frac{5\ell^2(r-1)}{\eta}} + \frac{\eta}{2}\norm{y-x}^2}
\end{align}
from which we get $\nabla f(x) = \eta(x - \textrm{prox}_{\tilde{f}}(x,\eta))$.
\end{proof}

\section{Proof of Theorem \ref{thm:proxLB}}\label{sec:proxappendix}
\proxLB*
\begin{proof}
Without loss of generality, assume $B = 1$, the lower bound can be proven for other values of $B$ by scaling inputs to our construction by $1/B$. Let 
\begin{equation}\label{eq:LHlbparameters}
\eta = \min\set{H,\ 2LD} \qquad\qquad
\gamma = \frac{4L}{\eta\sqrt{2D}} \qquad\qquad
a = 2c = \frac{1}{\sqrt{8D^3}} 
\end{equation}
Define the following scalar function
\begin{align}
\phi_c(z) &= 
\begin{cases} 
0 & \abs{z} \leq c \\
2(\abs{z} - c)^2 & c < \abs{z} \leq 2c \\
z^2 - 2c^2 & 2c < \abs{z} \leq \gamma \\
2\gamma\abs{z} - \gamma^2 - 2c^2 & \abs{z} > \gamma
\end{cases} 
\end{align}
It is straightforward to confirm that $\phi_c$ is convex, $2\gamma$-Lipschitz continuous, and $4$-smooth.
Let $\Pcal$ be the uniform distribution over $\set{1,2}$. Let $v_1,v_2,\dots,v_{2D}$ be a set of orthonormal vectors drawn uniformly at random and define
\begin{align}
f(x;1) &=  
\frac{\eta}{8}\parens{-2av_{1}^\top x + \phi_c\parens{v_{2D}^\top x} + \sum_{r=3,5,7,...}^{2D-1} \phi_c\parens{v_{r-1}^\top x - v_{r}^\top x}}
\\
f(x;2) &= 
\frac{\eta}{8}\parens{\sum_{r=2,4,6,...}^{2D} \phi_c\parens{v_{r-1}^\top x - v_{r}^\top x}} \\
F(x) &= \mathbb{E}_{z\sim\Pcal}\left[f(x;z)\right] 
= \frac{1}{2}\parens{f(x;1) + f(x;2)} \\
&= \frac{\eta}{16}\parens{-2av_{1}^\top x + \phi_c\parens{v_{2D}^\top x} + \sum_{r=2}^{2D} \phi_c\parens{v_{r-1}^\top x - v_{r}^\top x}}
\end{align}
The random choice of $V$ determines a distribution over functions $f(\cdot;1)$ and $f(\cdot;2)$. We will lower bound the expectation (over $V$) of the suboptimality of any deterministic algorithm's output, and then apply Yao's minimax principle.

First, we show that the functions $f(\cdot;1)$ and $:= f(\cdot;2)$ are convex, $L$-Lipschitz, and $H$-smooth:
\begin{restatable}{lemma}{convexlipschitzsmooth} \label{lem:convexlipschitzsmooth}
For any $H,L \geq 0$, $D \geq 1$, and orthonormal $v_1,...,v_{2D}$, and with $\eta$, $\gamma$, $a$, and $c$ chosen as in \eqref{eq:LHlbparameters},
$f(\cdot;1)$ and $f(\cdot;2)$ are convex, $L$-Lipschitz, and $H$-smooth.
\end{restatable}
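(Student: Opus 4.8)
# Proof Proposal for Lemma \ref{lem:convexlipschitzsmooth}

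The plan is to verify convexity, the Lipschitz bound, and the smoothness bound for $f(\cdot;1)$ and $f(\cdot;2)$ by exploiting the structure of these functions as (scaled) sums of compositions of the scalar building block $\phi_c$ with linear forms in an orthonormal system. I would first record the scalar facts about $\phi_c$ stated right before the lemma: $\phi_c$ is convex, $2\gamma$-Lipschitz, and $4$-smooth. These follow from a direct, piecewise inspection of the four branches: $\phi_c$ is $C^1$ with $\phi_c'$ continuous and piecewise linear with slopes in $\{0, 4, 2, 0\}$ (checking the values match at $|z| = c, 2c, \gamma$), so $|\phi_c'| \le 2\gamma$ and $\phi_c'$ is $4$-Lipschitz; convexity follows since $\phi_c'$ is nondecreasing on $[0,\infty)$ and $\phi_c$ is even.

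\textbf{Convexity.} Each term $\phi_c(v_{r-1}^\top x - v_r^\top x)$ and $\phi_c(v_{2D}^\top x)$ is a convex function composed with a linear map, hence convex; the term $-2a v_1^\top x$ is linear. A nonnegative combination of convex functions is convex, so $f(\cdot;1)$ and $f(\cdot;2)$ (and $F$) are convex.

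\textbf{Smoothness.} Here I would use orthonormality crucially. Writing the argument of each $\phi_c$ term as $u_j^\top x$ where the $u_j$ are the vectors $v_{2D}$ and $\{(v_{r-1}-v_r)\}$, the Hessian of a single term is $\phi_c''(u_j^\top x)\, u_j u_j^\top \preceq 4\, u_j u_j^\top$. Since the $v_i$ are orthonormal, $\|v_{r-1}-v_r\|^2 = 2$, and — this is the key point — the vectors appearing in distinct terms of $f(\cdot;1)$ (resp. $f(\cdot;2)$) involve \emph{disjoint} index sets: $f(\cdot;1)$ uses the pairs $(v_2,v_3),(v_4,v_5),\dots$ plus $v_{2D}$ alone, while $f(\cdot;2)$ uses $(v_1,v_2),(v_3,v_4),\dots$. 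Hence the sum of rank-one terms $\sum_j u_j u_j^\top$ has operator norm at most $2$ (the worst single $\|u_j\|^2$), not the number of terms. Therefore the Hessian of the sum is bounded by $\frac{\eta}{8}\cdot 4 \cdot 2 = \eta \le H$, giving $H$-smoothness. I would spell out the disjointness argument explicitly since it is what prevents a spurious factor of $D$.

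\textbf{Lipschitz bound.} I would bound $\|\nabla f(x;z)\|$. For $f(\cdot;2)$, $\nabla f(x;2) = \frac{\eta}{8}\sum_{r \text{ even}} \phi_c'(v_{r-1}^\top x - v_r^\top x)(v_{r-1}-v_r)$, and again by disjointness of the index pairs this is an orthogonal-type sum, so $\|\nabla f(x;2)\|^2 = \frac{\eta^2}{64}\sum_r \phi_c'(\cdot)^2 \cdot 2 \le \frac{\eta^2}{64}\cdot 2\gamma^2 \cdot 2 \cdot D = \frac{\eta^2 \gamma^2 D}{16}$, hence $\|\nabla f(x;2)\| \le \frac{\eta\gamma\sqrt{D}}{4}$. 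Plugging $\gamma = \frac{4L}{\eta\sqrt{2D}}$ gives $\|\nabla f(x;2)\| \le \frac{\eta\sqrt D}{4}\cdot\frac{4L}{\eta\sqrt{2D}} = \frac{L}{\sqrt 2} \le L$. For $f(\cdot;1)$ there is the extra linear piece contributing $\frac{\eta}{8}\cdot 2a = \frac{\eta a}{4}$ in the $v_1$ direction, and the $\phi_c(v_{2D}^\top x)$ term contributes in the $v_{2D}$ direction; since all these directions are mutually orthogonal I can add squared norms, getting roughly $\|\nabla f(x;1)\|^2 \le (\frac{\eta a}{4})^2 + \frac{\eta^2}{64}(2\gamma)^2 + \frac{\eta^2 \gamma^2 (D-1)}{16}$, and I would check with the chosen $a = \frac{1}{\sqrt{8D^3}}$ and $\eta \le 2LD$ that each piece is controlled (e.g. $\frac{\eta a}{4} \le \frac{2LD}{4\sqrt{8D^3}} = \frac{L}{2\sqrt{8D}}$, negligible) so the total is at most $L^2$.

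\textbf{Main obstacle.} The only real subtlety is making the constants line up so that all three bounds hold \emph{simultaneously} with the single parameter choice \eqref{eq:LHlbparameters} — in particular tracking where $\eta = \min\{H, 2LD\}$ is used (it is needed for smoothness directly as $\eta \le H$, and for the Lipschitz bound one needs the interplay between $\eta$, $\gamma$, and $D$). The bookkeeping for $f(\cdot;1)$ with its three different kinds of terms is where I would be most careful, but the disjoint-support observation makes it a matter of summing a handful of explicit scalars rather than anything delicate.
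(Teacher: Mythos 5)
Your proposal is correct and follows essentially the same approach as the paper's proof: convexity from composition of the convex scalar $\phi_c$ with linear maps, the Lipschitz bound by expanding $\|\nabla f(x;z)\|^2$ using orthonormality and the disjoint supports of the summands, and smoothness by bounding the (block-diagonal) Hessian using $\phi_c'' \le 4$ and $\|v_{r-1}-v_r\|^2=2$. One small arithmetic slip: you wrote $(\phi_c')^2 \le 2\gamma^2$ where it should be $(2\gamma)^2 = 4\gamma^2$; with the correction the final bound comes out exactly to $L$ rather than $L/\sqrt{2}$, but your conclusion holds either way, and the same slip propagates harmlessly into your $f(\cdot;1)$ tally.
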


Next, we show that optimizing $F$ is equivalent to ``finding'' a large number of the vectors $v_1,\dots,v_{2D}$:
\begin{restatable}{lemma}{suboptimality} \label{lem:LHlbsuboptimality}
For any $H,L \geq 0$, $D \geq 1$, and orthonormal $v_1,...,v_{2D}$, and with $\eta$, $\gamma$, $a$, and $c$ chosen as in \eqref{eq:LHlbparameters}, for any $x$ such that $\abs{v_r^\top x}\leq\frac{c}{2}$ for all $r > D$
\[
F(x) - \min_{x:\norm{x}\leq 1} F(x) \geq \min\set{\frac{L}{32D},\ \frac{H}{64D^2}}
\]
\end{restatable}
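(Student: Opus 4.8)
The plan is to pass to the natural ``gap'' coordinates, in which $F$ becomes a separable sum, and then to compare the value of $F$ at a point obeying the hypothesis against its global minimum. Since $F(x)$ depends on $x$ only through the scalars $t_r := v_r^\top x$ (any component of $x$ orthogonal to $\spn{v_1,\dots,v_{2D}}$ merely wastes norm budget), we may take $x\in\spn{v_1,\dots,v_{2D}}$ with $\sum_r t_r^2 \le 1$. Set $g_r = t_{r-1}-t_r$ for $r=2,\dots,2D$ and $g_{2D+1}=t_{2D}$; telescoping gives $t_1=\sum_{r=2}^{2D+1}g_r$, and substituting into the definition of $F$,
\[
\frac{16}{\eta}F(x) \;=\; \sum_{r=2}^{2D+1}\parens{\phi_c(g_r)-2a\,g_r} \;=\; \sum_{r=2}^{2D+1}h(g_r), \qquad h(g):=\phi_c(g)-2ag ,
\]
a sum of $2D$ copies of one fixed one-dimensional convex function, coupled only through the norm constraint on $(t_1,\dots,t_{2D})$.

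First I would upper bound $\min_{\norm{x}\le 1}F(x)$. From the piecewise form of $\phi_c$ together with $a=2c$ one checks that $h$ is decreasing on $(0,2c)$ and increasing on $(2c,\gamma)$ (the choices in \eqref{eq:LHlbparameters} guarantee $2c\le\gamma$), so $\min_g h(g)=h(2c)=2c^2-4ac=-6c^2$. Taking $g_r=2c$ for every $r$ yields $t_j=2c(2D+1-j)$; since $c^2=1/(32D^3)$, one has $\sum_j t_j^2 = 4c^2\sum_{k=1}^{2D}k^2<1$, so this point is feasible and hence $\min_{\norm{x}\le1}F(x)\le \frac{\eta}{16}\cdot 2D\cdot(-6c^2)=-\frac{3\eta Dc^2}{4}$.

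Next I would lower bound $F(x)$ for any $x$ satisfying $\abs{t_r}\le c/2$ for all $r>D$. Split the sum at $r=D+1$: the first $D$ terms give $\sum_{r=2}^{D+1}h(g_r)\ge D\min_g h(g)=-6Dc^2$; for the remaining terms, drop the nonnegative $\phi_c$ pieces and keep only the linear pieces, which telescope: $\sum_{r=D+2}^{2D+1}g_r=t_{D+1}$, so $\sum_{r=D+2}^{2D+1}h(g_r)\ge -2a\,t_{D+1}\ge -ac$ by the hypothesis $t_{D+1}\le c/2$. Thus $\frac{16}{\eta}F(x)\ge -6Dc^2-ac$, and combining with the previous step,
\[
F(x)-\min_{\norm{x}\le1}F(x) \;\ge\; \frac{\eta}{16}\parens{12Dc^2-6Dc^2-ac} \;=\; \frac{\eta}{16}\parens{6Dc^2-2c^2}\;\ge\;\frac{\eta D c^2}{4},
\]
using $D\ge1$. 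Plugging in $c^2=1/(32D^3)$ and $\eta=\min\set{H,2LD}$ gives a bound of order $\min\set{H/D^2,\ L/D}$, which is the content of the lemma up to the stated constants.

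The conceptual crux is the telescoping step: the ``chain'' of $\phi_c$ penalties only lets an algorithm push $t_1$ away from $0$ one link at a time, so once $t_{D+1},\dots,t_{2D}$ are pinned near $0$ the algorithm effectively has only $D$ free links rather than $2D$, and therefore forgoes a constant fraction of the decrease of $F$ below $0$ that the global minimizer attains. The routine but essential bookkeeping is verifying feasibility of the candidate minimizer — which is exactly why $c$ is taken to scale as $D^{-3/2}$ — and carrying the constants through both branches of $\eta=\min\set{H,2LD}$; the convexity and smoothness facts about $\phi_c$ needed for the analysis of $h$ are precisely those recorded in (the proof of) Lemma \ref{lem:convexlipschitzsmooth}.
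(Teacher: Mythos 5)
Your proof is correct and takes a genuinely different route from the paper. The paper exhibits two explicit points and checks first-order optimality conditions: the global minimizer $x^*$ with $\nabla F(x^*)=0$, and the constrained minimizer $x_D$ of $F$ over $\set{x:\norm{x}\le 1,\ \abs{v_r^\top x}\le c/2\ \forall r>D}$ via KKT multipliers, then subtracts. You instead pass to the telescoped ``gap'' variables $g_r = t_{r-1}-t_r$ (and $g_{2D+1}=t_{2D}$), under which $\tfrac{16}{\eta}F=\sum_{r=2}^{2D+1}h(g_r)$ with $h(g)=\phi_c(g)-2ag$ a single convex scalar function minimized at $g=2c$ with value $-6c^2$. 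This makes the upper bound on $\min F$ immediate (set every $g_r=2c$; feasibility is the same norm calculation the paper does for $x^*$, and indeed your candidate is exactly the paper's $x^*$), and the constrained lower bound drops out of splitting the sum at $r=D+1$ and telescoping the linear parts of the last $D$ terms back to $t_{D+1}$. Your lower bound $\tfrac{16}{\eta}F(x)\ge -6Dc^2-2c^2$ matches the paper's $F(x_D)$ exactly, so nothing is lost by replacing the KKT argument with convexity of $h$; the payoff of your route is that no verification of dual feasibility/complementary slackness is needed.

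On constants: your final bound $\tfrac{\eta D c^2}{4}=\tfrac{\eta}{128D^2}=\min\set{\tfrac{L}{64D},\tfrac{H}{128D^2}}$ is a factor of two below the lemma's stated $\min\set{\tfrac{L}{32D},\tfrac{H}{64D^2}}$, and you flagged this. Having re-derived $F(x^*)$, this appears to trace to an arithmetic slip in the paper rather than slack in your argument: the paper writes $\tfrac{\eta}{16}(-2Da^2-4Dc^2)=-\tfrac{\eta D a^2}{4}$, but with $a^2=4c^2$ the left side equals $-\tfrac{3\eta D c^2}{4}=-\tfrac{3\eta D a^2}{16}$, not $-\tfrac{\eta D a^2}{4}$. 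Propagating the corrected value gives $F(x_D)-F(x^*)=\tfrac{\eta(3D-1)}{256D^3}\ge\tfrac{\eta}{128D^2}$ for $D\ge 1$, in agreement with your constant. This is immaterial to Theorem \ref{thm:proxLB} since the final bound is stated with $\Omega(\cdot)$, but if you want your proof to literally reproduce the lemma's displayed inequality you would need to note that the stated constants should be $\tfrac{L}{64D}$ and $\tfrac{H}{128D^2}$.
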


Next, we show that at any point $x$ such that $\abs{v_r^\top x}\leq\frac{c}{2}$ for all $r \geq t$, the function value, gradient, and prox of $f(\cdot;1)$ and $f(\cdot;2)$ at $x$ are calculable using $v_1,\dots,v_t$ only:
\begin{restatable}{lemma}{gradproxoraclespan} \label{lem:gradproxoraclespan} 
For any $x$ such that $\abs{v_r^\top x}\leq\frac{c}{2}$ for all $r \geq t$, and any $\beta \geq 0$ the function values, gradients, and proxs
$f(x;1)$, $f(x;2)$, $\nabla f(x;1)$, $\nabla f(x;2)$, $\textrm{prox}_{f(\cdot,1)}(x,\beta)$, and $\textrm{prox}_{f(\cdot,2)}(x,\beta)$ are calculable using $\beta,x,v_1,\dots,v_t$ only.
\end{restatable}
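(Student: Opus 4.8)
The plan is to work in the coordinate system fixed by $V$. Since each $f(\cdot;z)$ depends on its argument $y$ only through the inner products $v_1^\top y,\dots,v_{2D}^\top y$, I would set $s_r := v_r^\top y$ and $x_r := v_r^\top x$, and record that the hypothesis reads $\abs{x_r}\le c/2$ for all $r\ge t$. I would also use the elementary facts (immediate from the piecewise definition) that $\phi_c$ is $C^1$ and that both $\phi_c$ and $\phi_c'$ vanish on the closed interval $[-c,c]$.

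For the function values and gradients, the idea is that in $f(x;1)=\frac{\eta}{8}\bigl(-2a x_1 + \phi_c(x_{2D}) + \sum_{r=3,5,\dots}^{2D-1}\phi_c(x_{r-1}-x_r)\bigr)$, any summand all of whose indices are $\ge t$ has argument of magnitude $\le c$, and hence contributes $0$ to $f(x;1)$ and — since $\phi_c'$ also vanishes there — to $\nabla f(x;1)$. The surviving summands involve only $v_1,\dots,v_t$; the same reasoning handles $f(x;2)$, which settles this part.

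For the prox, I would restrict to $\beta>0$ (the case $\beta=0$, if read as an unconstrained minimizer of $f(\cdot;z)$, need not be well-defined for $z=1$; where it is, the argument goes through). Writing $y=\sum_r s_r v_r + P^\perp y$ with $P^\perp$ the orthogonal projection onto $\mathrm{Span}(V)^\perp$, the objective $f(y;z)+\tfrac{\beta}{2}\norm{y-x}^2$ is strongly convex and decomposes as a term in $P^\perp y$ (only through $\tfrac{\beta}{2}\norm{P^\perp(y-x)}^2$, forcing $P^\perp y^*=P^\perp x$) plus a term in $(s_1,\dots,s_{2D})$ that further decouples along the disjoint ``links'' of $f(\cdot;z)$: for $z=1$, a linear-plus-quadratic block in $s_1$, disjoint two-variable blocks each coupled by a single $\phi_c$, and a one-variable $\phi_c$ block in $s_{2D}$; for $z=2$, only disjoint two-variable blocks. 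The point I would then exploit is that, with these pairing patterns, no block can contain both an index $<t$ and an index $>t$, so each block is of one of two types: (i) all indices $\ge t$, in which case setting $s_r=x_r$ on the block makes the $\phi_c$ term attain its global minimum $0$ (argument of magnitude $\le c$) while also minimizing the quadratic, so $s_r^*=x_r$ there; or (ii) all indices $\le t$, in which case the block's minimizer is a function of $x,\beta,v_1,\dots,v_t$ only. Hence $s_r^*=x_r$ for every $r>t$, and
\[
\textrm{prox}_{f(\cdot;z)}(x,\beta)=\sum_{r\le t}s_r^*\,v_r+\Bigl(x-\sum_{r\le t}x_r v_r\Bigr)=x+\sum_{r\le t}(s_r^*-x_r)\,v_r,
\]
which is computable from $x,\beta,v_1,\dots,v_t$.

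I expect the mechanical parts — the $\phi_c/\phi_c'$ vanishing and the splitting of the prox into blocks — to be routine; the delicate step will be the index bookkeeping near the boundary index $t$: checking, uniformly in $t$ and in the parity of $t$, that the pairings in $f(\cdot;1)$ and $f(\cdot;2)$ never produce a block straddling $t$ (so that even the block containing $v_t$ is either inactive or supported on $v_1,\dots,v_t$), and that the leftover high-index coordinates reassemble exactly into $x-\sum_{r\le t}x_r v_r$.
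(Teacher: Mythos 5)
Your proposal is correct and follows essentially the same route as the paper's own proof: both decouple the prox objective into a trivial orthogonal component (set to $P^\perp x$) plus per-link blocks in the $v_r$-coordinates, and observe that any link whose indices all satisfy $r\ge t$ is exactly minimized by setting $s_r = v_r^\top x$, since the argument of $\phi_c$ then has magnitude at most $c$ and the link contributes zero. The ``delicate index bookkeeping'' you anticipate is actually a non-issue: every link of $f(\cdot;1)$ and $f(\cdot;2)$ is either a singleton or a pair of \emph{consecutive} integers, so no link can simultaneously contain an index $<t$ and an index $>t$; this is precisely what the paper's parity-dependent choice of $t'\in\{t,t-1\}$ is engineered to guarantee, and your version sidesteps that case split cleanly. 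Your caveat about $\beta=0$ is fair---the unconstrained $\argmin$ defining $\prox_{f(\cdot;1)}(x,0)$ is indeed ill-posed because of the linear term in $f(\cdot;1)$---but this is a technicality the paper leaves implicit and does not affect the lower-bound argument, which only needs the oracle answer to be computable from $v_1,\dots,v_t$ whenever it is well-defined.
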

In Appendix \ref{sec:vectorguessing}, we studied the situation where orthonormal $v_1,\dots,v_{2D}$ are chosen uniformly at random and a sequence of sets of vectors $X_1,\dots,X_{2D}$ are generated as
\begin{equation}\label{eq:decomposition2}
X_t\parens{V} = X_t\parens{V_{<t}\indicator{G'_{<t}} + V\indicator{\lnot G'_{<t}}}
\end{equation}
where
\begin{equation}
G'_{<t} = \left\llbracket  \forall r < t,\ \forall x \in X_r,\ \forall j\geq r\ \abs{\inner{x}{v_j}} \leq \frac{c}{2} \right\rrbracket
\end{equation}
Consider the dependency graph, and let $X_1$ be the set of queries made in vertices at depth 1 in the graph (i.e.~they have no parents). Let $X_2$ be the set of queries made in vertices at depth 2 in the graph (i.e.~their parents correspond to the queries in $X_1$). Continue in this way for each $t \leq D$, and then let $X_{D+1} = \set{\hat{x}}$ correpond to the output of the optimization algorithm, which for now is deterministic. 

Suppose $G'_{<t}$. Then for all of the queries $x \in X_1\cup\dots\cup X_{t-1}$ and for all $r \geq t-1$ we have $\abs{\inner{x}{v_r}} \leq \frac{c}{2}$. Therefore, by Lemma \ref{lem:gradproxoraclespan} the function values, gradients, and proxs of $f(\cdot;1)$ and $f(\cdot;2)$ are calculable based only on the query points and $v_1,\dots,v_{t-1}$.  Therefore, all of the queries in $X_t$ are a deterministic function of $V_{<t}$ only so $X_t$ satisfies the required decomposition property \eqref{eq:decomposition2}. Finally, the queries are required to be in the domain of $f$, thus they will have norm bounded by $B$.

Therefore, by Lemma \ref{lem:keylowerboundlemma} for
\begin{equation}
m \geq 2D + N + \max\set{32N^2,\ 128B^2D^3(1+\sqrt{2N})^2}\log\parens{8D^2N}
\end{equation}
with probability $1/2$ for every $x\in X_1\cup\dots\cup X_{D+1}$ which includes $\hat{x}$,  $\abs{\inner{x}{v_{r}}} \leq \frac{c}{2}$ for $r > D$, so by Lemma \ref{lem:LHlbsuboptimality}
\begin{equation}
f(\hat{x}) - \min_{x:\norm{x}\leq 1} f(x) \geq \min\set{\frac{L}{32D},\ \frac{H}{64D^2}}
\end{equation}
Therefore,
\begin{equation}
\min_{\textrm{deterministic }\A}\mathbb{E}_{V}\left[
f(\hat{x}) - \min_{x:\norm{x}\leq 1} f(x) \right] \geq \min\set{\frac{L}{64D},\ \frac{H}{128D^2}}
\end{equation}
so by Yao's minimax principle, for any randomized algorithm $\A$
\begin{equation}
\max_{V}\mathbb{E}_{\hat{X}\sim\A}\left[
f(\hat{X}) - \min_{x:\norm{x}\leq 1} f(x) \right] \geq \min\set{\frac{L}{64D},\ \frac{H}{128D^2}}
\end{equation}
The statistical term $\frac{LB}{8\sqrt{N}}$ follows from Lemma \ref{lem:statisticalLB}.
\end{proof}

\subsection{Deferred proof}
\convexlipschitzsmooth*
\begin{proof}
The functions $f(\cdot;1)$ and $f(\cdot;2)$ are a sum of linear functions and $\phi_c$, which is convex; therefore both are convex. 
Also, the scalar function $\phi_c$ is $2\gamma$-Lipschitz, so 
\begin{align}
\norm{\nabla f(x;1)}^2 
&= \norm{\frac{\eta}{8}\parens{-2av_{1} + \phi_c'\parens{v_{2D}^\top x}v_{2D} + \sum_{r=3,5,7,...}^{2D-1} \phi'_c\parens{v_{r-1}^\top x - v_{r}^\top x}\parens{v_{r-1}-v_r}}}^2 \\
&\leq \frac{\eta^2\parens{a^2 + (2D-1)\gamma^2}}{16} \leq \frac{2D\eta^2\gamma^2}{16} = L^2
\end{align}
where we used that $a = \frac{1}{\sqrt{8D^3}} < \gamma = \frac{4L}{\eta\sqrt{2D}}$. Similarly,
\begin{align}
\norm{\nabla f(x;2)}^2 
= \norm{\frac{\eta}{8}\parens{\sum_{r=2,4,6,...}^{2D} \phi'_c\parens{v_{r-1}^\top x - v_{r}^\top x}\parens{v_{r-1}-v_r}}}^2 
\leq \frac{2D\eta^2\gamma^2}{16} = L^2
\end{align}
Therefore, $f(\cdot;1)$ and $f(\cdot;2)$ are $L$-Lipschitz. Furthermore, since $\phi_c$ is $4$-smooth,
\begin{equation}
\abs{v_i^\top \nabla^2f(x;1) v_j} \leq
\begin{cases}
\frac{\eta}{2} & \abs{i-j} \leq 1 \\
0 & \abs{i-j} > 1
\end{cases} 
\mathand 
\abs{v_i^\top \nabla^2f(x;2) v_j} \leq
\begin{cases}
\frac{\eta}{2} & \abs{i-j} \leq 1 \\
0 & \abs{i-j} > 1
\end{cases} 
\end{equation}
therefore, the maximum eigenvalue of $\nabla^2 f(\cdot;1)$ and $\nabla^2 f(\cdot;2)$ is at most $\eta \leq H$.
\end{proof}

\suboptimality*
\begin{proof}
First, we upper bound $\min_{x:\norm{x}\leq 1} F(x)$.
Recalling that $a = \frac{1}{\sqrt{8D^3}}$, define
\begin{align}
x^* &= a \sum_{r=1}^{2D} (2D+1-r) v_r \\
\norm{x^*}^2 &= \frac{1}{8D^3}\parens{\frac{2D(2D+1)(4D+1)}{6}} \leq 1
\end{align}
For this $x^*$, $v_{r-1}^\top x^* - v_{r}^\top x^* = v_{2D}^\top x^* = a$ and with our choice of parameters \eqref{eq:LHlbparameters}, $2c = a < \gamma$, so that $\phi_c'(a) = 2a$, thus
\begin{align}
\nabla F(x^*) &= 
\frac{\eta}{16}\parens{-2av_1 + \phi'_c\parens{v_{2D}^\top x^*}v_{2D} + \sum_{r=2}^{2D} \phi'_c\parens{v_{r-1}^\top x^* - v_{r}^\top x^*}(v_{r-1}-v_r)}
\end{align}
thus,
\begin{align}
\nabla F(x^*)^\top v_1 &= -2a + \phi_c'\parens{a} = 0 \\
\nabla F(x^*)^\top v_r &= -\phi'_c\parens{a} + \phi'_c\parens{a} = 0 &\qquad 2 \leq r \leq 2D-1 \\
\nabla F(x^*)^\top v_{2D} &= -\phi'_c\parens{a} + \phi'_c\parens{a} = 0
\end{align} 
Since $\norm{x^*} \leq 1$ and $\nabla F(x^*) = 0$, we conclude
\begin{align}
\min_{x:\norm{x}\leq 1} F(x) 
&= F(x^*) = \frac{\eta}{16}\parens{-2Da^2 - 4Dc^2)} = -\frac{\eta D a^2}{4} = -\frac{\eta}{32D^2} \label{eq:ksuboptimality}
\end{align}

Let $X_D = \set{x:\norm{x}\leq 1,\ \abs{v_r^\top x} \leq \frac{c}{2}\ \forall r > D}$.
We will now lower bound
\begin{equation} \label{eq:tconstrainedminimization}
\min_{x\in X_D}F(x) = \min_{x:\norm{x}\leq 1} F(x) \quad \textrm{s.t.}\quad \abs{v_r^\top x} \leq \frac{c}{2} \quad \forall r > D
\end{equation}

Introducing dual variables $\lambda_{D+1},...,\lambda_{2D} \geq 0$, solving \eqref{eq:tconstrainedminimization} amounts to finding an $x \in X_D$ and a set of non-negative $\lambda$s such that $\nabla F(x) = -\sum_{r=D+1}^{2D} \lambda_r\, \textrm{sign}\parens{v_r^\top x} v_r$ and such that $\lambda_r\parens{v_r^\top x - \frac{c}{2}} = 0$ for each $r$.
Let
\begin{align}
x_D &= \sum_{r=1}^{D+1}\parens{a\parens{D+1-r} + \frac{c}{2}}v_r,\quad \lambda_{D+1} = 2a,\ \quad \lambda_{D+2} = \dots = \lambda_k = 0
\end{align}
Since $a\parens{D+1-r} + \frac{c}{2} < a\parens{2D+1-r}$ for $r \leq D+1$ and $\norm{x^*} \leq 1$ it follows that $\norm{x_D} \leq 1$. Furthermore, since $v_{r-1}^\top x_D - v_{r}^\top x_D = a$ for $2 \leq r \leq D+1$ and $2c = a < \gamma$, the gradient
\begin{align}
\nabla F(x_D)^\top v_1 &= -2a + \phi_c'\parens{a} = 0 \\
\nabla F(x_D)^\top v_r &= -\phi'_c\parens{a} + \phi'_c\parens{a} = 0 &2 \leq r \leq D \\
\nabla F(x_D)^\top v_{D+1} &= -\phi'_c\parens{a} + \phi'_c\parens{\frac{c}{2}} = -2a = -\lambda_{D+1} \\
\nabla F(x_D)^\top v_r &= 0 = -\lambda_{r}  & D+2 \leq r \leq 2D
\end{align} 
Therefore,
\begin{align}
\min_{x\in X_D} F(x) = F(x_D)
&= \frac{\eta}{16}\parens{-Da^2 - ac - 2Dc^2} = -\frac{\eta(3D+1)a^2}{32}
= -\frac{\eta(3D+1)}{256D^3} \label{eq:tsuboptimality}
\end{align}
Combining \eqref{eq:ksuboptimality} and \eqref{eq:tsuboptimality} we have that
\begin{multline}
\min_{x\in X_D} F(x) - \min_{x:\norm{x}\leq 1} F(x) = F(x_D) - F(x^*) \\
= \frac{\eta}{32D^2} - \frac{\eta(3D+1)}{256D^3} 
\geq \frac{\eta}{32D^2} - \frac{\eta}{64D^2} 
= \min\set{\frac{L}{32D},\ \frac{H}{64D^2}}
\end{multline}
\end{proof}

\gradproxoraclespan*
\begin{proof}
Suppose that $x$ is a point such that $\abs{v_r^\top x} \leq \frac{c}{2}$ for all $r \geq t$, and $\beta \geq 0$. Therefore, $\phi_c\parens{v_{r-1}^\top x - v_{r}^\top x} = 0$ for $r > t$ so
\begin{align}
f(x;1) 
&= \frac{\eta}{8}\parens{-2av_{1}^\top x + \phi_c\parens{v_{2D}^\top x} + \sum_{r=3,5,7,...}^{2D-1} \phi_c\parens{v_{r-1}^\top x - v_{r}^\top x}} \\
&= \frac{\eta}{8}\parens{-2av_{1}^\top x + \sum_{r=3,5,7,...}^{t} \phi_c\parens{v_{r-1}^\top x - v_{r}^\top x}} \\
f(x;2) 
&= \frac{\eta}{8}\parens{\sum_{r=2,4,6,...}^{2D} \phi_c\parens{v_{r-1}^\top x - v_{r}^\top x}} \\
&= \frac{\eta}{8}\parens{\sum_{r=2,4,6,...}^{t} \phi_c\parens{v_{r-1}^\top x - v_{r}^\top x}}
\end{align}
Thus both $f(x;1)$ and $f(x;2)$ can be calculated from $x,v_1,\dots,v_t$ only. Similarly, $\phi'_c\parens{v_{r-1}^\top x - v_{r}^\top x} = 0$ for $r > t$ so
\begin{align}
\nabla f(x;1) 
&= \frac{\eta}{8}\parens{-2av_{1} + \phi_c'\parens{v_{2D}^\top x}v_{2D} + \sum_{r=3,5,7,...}^{2D-1} \phi'_c\parens{v_{r-1}^\top x - v_{r}^\top x}\parens{v_{r-1}-v_r}} \\
&= \frac{\eta}{8}\parens{-2av_{1} + \sum_{r=3,5,7,...}^{t} \phi'_c\parens{v_{r-1}^\top x - v_{r}^\top x}\parens{v_{r-1}-v_r}} \\
\nabla f(x;2) 
&= \frac{\eta}{8}\parens{\sum_{r=2,4,6,...}^{2D} \phi'_c\parens{v_{r-1}^\top x - v_{r}^\top x}\parens{v_{r-1}-v_r}} \\
&= \frac{\eta}{8}\parens{\sum_{r=2,4,6,...}^t \phi'_c\parens{v_{r-1}^\top x - v_{r}^\top x}\parens{v_{r-1}-v_r}}
\end{align}
Thus $\nabla f(x;1)$ and $\nabla f(x;2)$ can also be calculated from $x,v_1,\dots,v_t$ only.

Now, we consider the proxs at such a point $x$.
Let $t' = t$ if $t$ is odd, and $t' = t-1$ if $t$ is even. Let $P$ be the projection operator onto $S = \spn{v_1,\dots,v_{t'}}$ and let $P^\perp$ be the projection onto the orthogonal subspace, $S^\perp$. 
Then, since $f(x;1) = f(Px;1) + f(P^\perp x;1)$, we can decompose the prox:
\begin{align}
&\textrm{prox}_{f(\cdot;1)}(x,\beta) \nonumber\\
&= \argmin_{y} f(y;1) + \frac{\beta}{2}\norm{y-x}^2 \\
&= \argmin_{y_1 \in S, y_2 \in S^\perp} f(y_1;1) + f(y_2;1) + \frac{\beta}{2}\parens{\norm{y_1 - Px}^2 + \norm{y_2 - P^\perp x}^2}  \\
&= \argmin_{y_1 \in S} \frac{\eta}{8}\parens{-2av_{1}^\top y_1 + \sum_{r=3,5,7,...}^{t'} \phi_c\parens{v_{r-1}^\top y_1 - v_{r}^\top y_1}} + \frac{\beta}{2}\norm{y_1 - Px}^2 \\
&+ \argmin_{y_2 \in S^\perp} \frac{\eta}{8}\parens{\phi_c\parens{v_{2D}^\top y_2} + \sum_{r=t'+2,t'+4,\dots}^{2D-1} \phi_c\parens{v_{r-1}^\top y_2 - v_{r}^\top y_2}} + \frac{\beta}{2}\norm{y_2 - P^\perp x}^2 \\
&= P^\perp x + \argmin_{y_1 \in S} \frac{\eta}{8}\parens{-2av_{1}^\top y_1 + \sum_{r=3,5,7,...}^{t'} \phi_c\parens{v_{r-1}^\top y_1 - v_{r}^\top y_1}} + \frac{\beta}{2}\norm{y_1 - Px}^2
\end{align}
Where we used that $\abs{v_r^\top P^\perp x}=\abs{v_r^\top x}\leq\frac{c}{2}$ for all $r > t'$, so setting $y_2 = P^\perp x$ achieves the minimum since every term in the expression is zero and function is non-negative. The vector $P^\perp x$ is calculable from $x,v_1,\dots,v_{t'} \subseteq x,v_1,\dots,v_t$, and similarly the second term is a minimization depends only on $\beta,x,v_1,\dots,v_{t'} \subseteq \beta,x,v_1,\dots,v_{t}$. A nearly identical argument shows that $\prox_{f(\cdot;2)}(x,\beta)$ has the same property.
\end{proof}

\section{Statistical term}\label{sec:statisticalLB}
\begin{restatable}{lemma}{statisticalLB} \label{lem:statisticalLB}
For any $L,B > 0$, there exists a distribution $\Pcal$, and an $L$-Lipschitz, $0$-smooth function $f$ defined on $[-B,B]$ such that the output $\hat{x}$ of any potentially randomized optimization algorithm which accesses a stochastic gradient or prox oracle at most $N$ times satisfies
\[
\mathbb{E}_{\hat{X}\sim\A}\left[ \mathbb{E}_{z\sim\Pcal}\left[ f(\hat{X};z) \right] - \min_{\abs{x}\leq B}\mathbb{E}_{z\sim\Pcal}\left[ f(x;z) \right]  \right] \geq \frac{LB}{8\sqrt{N}}
\] 
\end{restatable}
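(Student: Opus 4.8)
The plan is to use a one‑dimensional linear construction and reduce the problem to distinguishing two nearly indistinguishable biased coins. Take $f(x;z) = Lzx$ on the domain $\set{x\in\R:\abs{x}\le B}$ with $z\in\set{-1,+1}$; this $f$ is linear, hence convex, $L$-Lipschitz, and $0$-smooth. The crucial observation is that every oracle response at any query point reveals nothing about $z$ beyond $z$ itself: $\nabla f(x;z)=Lz$ independently of $x$, $f(x;z)=Lzx$ determines $z$ once $x$ is known, and $\prox_{f(\cdot;z)}(x,\beta)=x-Lz/\beta$ for $\beta>0$ does likewise. Hence one oracle access yields exactly one fresh i.i.d.\ sample $z\sim\Pcal$ when the algorithm samples, or no information about $\Pcal$ when it actively chooses $z$; either way $\hat X$ is a randomized function of at most $N$ i.i.d.\ draws from $\Pcal$ together with the internal randomness $\xi$.

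Now fix a parameter $\delta\in(0,\tfrac14]$ to be chosen and let $\Pcal_+$ (resp.\ $\Pcal_-$) place mass $\tfrac{1+\delta}{2}$ on $+1$ (resp.\ $-1$) and $\tfrac{1-\delta}{2}$ on the other value. Then $F_\pm(x)=\mathbb{E}_{z\sim\Pcal_\pm}\left[Lzx\right]=\pm L\delta x$, whose minimizer over $[-B,B]$ is $\mp B$ with value $-L\delta B$, so the suboptimality of the output is $L\delta(\hat X+B)$ under $\Pcal_+$ and $L\delta(B-\hat X)$ under $\Pcal_-$ (both nonnegative since $\abs{\hat X}\le B$). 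Averaging over the uniform prior on the two distributions gives expected suboptimality $L\delta B+\tfrac{L\delta}{2}\parens{\mathbb{E}_+[\hat X]-\mathbb{E}_-[\hat X]}$, where $\mathbb{E}_\pm$ is expectation over $z_1,\dots,z_N\sim\Pcal_\pm$ and over $\xi$. A standard coupling/data‑processing bound gives $\abs{\mathbb{E}_+[\hat X]-\mathbb{E}_-[\hat X]}\le 2B\cdot\mathrm{TV}\parens{\Pcal_+^{\otimes N},\Pcal_-^{\otimes N}}$ (in the active‑query setting the number of genuine samples is at most $N$, which only decreases the total variation distance), so the average suboptimality is at least $L\delta B\parens{1-\mathrm{TV}(\Pcal_+^{\otimes N},\Pcal_-^{\otimes N})}$. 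Since the maximum over the two distributions is at least their average, it suffices to make this lower bound at least $\tfrac{LB}{8\sqrt N}$.

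It remains to bound the total variation distance and to tune $\delta$. Pinsker's inequality together with tensorization of KL divergence gives $\mathrm{TV}\parens{\Pcal_+^{\otimes N},\Pcal_-^{\otimes N}}\le\sqrt{\tfrac{N}{2}\,D_{\mathrm{KL}}(\Pcal_+\,\|\,\Pcal_-)}$, and a short calculation gives $D_{\mathrm{KL}}(\Pcal_+\,\|\,\Pcal_-)=\delta\log\tfrac{1+\delta}{1-\delta}\le\tfrac{2\delta^2}{1-\delta^2}\le\tfrac{32}{15}\delta^2$ for $\delta\le\tfrac14$. Taking $\delta=\tfrac{1}{4\sqrt N}$ (which is $\le\tfrac14$ since $N\ge 1$) yields $\mathrm{TV}\parens{\Pcal_+^{\otimes N},\Pcal_-^{\otimes N}}\le\tfrac{1}{\sqrt{15}}<0.26$, so the expected suboptimality is at least $\tfrac{LB}{4\sqrt N}\parens{1-\tfrac{1}{\sqrt{15}}}>\tfrac{LB}{8\sqrt N}$, which proves the lemma (the construction lives in $\R^1$ and embeds trivially in any $\R^m$). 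There is no real obstacle here beyond careful bookkeeping; the only steps worth attention are the second one---verifying that the prox oracle and the active‑query option genuinely leak nothing about $\Pcal$ beyond the i.i.d.\ samples---and keeping the constants in Pinsker's inequality and the Bernoulli KL estimate tight enough to land the stated $1/8$ factor.
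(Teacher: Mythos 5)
Your construction is identical to the paper's: the linear loss $f(x;z)=Lzx$ on $[-B,B]$ with $z\in\{-1,+1\}$ and two slightly biased Bernoulli distributions for $z$, reducing the problem to distinguishing two nearly identical coins from at most $N$ samples. Where you diverge is the final hypothesis-testing bound. The paper identifies the Bayes-optimal estimator of $\mathrm{sign}(2p-1)$ (the sign of the empirical mean) and then lower-bounds its error probability by a direct anti-concentration estimate on the Binomial tail, citing Slud's inequality; you instead bound $\lvert\mathbb{E}_+[\hat X]-\mathbb{E}_-[\hat X]\rvert$ by $2B$ times the total variation distance between the $N$-fold product distributions, and control that TV via Pinsker plus KL tensorization. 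Both arguments are correct and land the same $LB/(8\sqrt N)$ after tuning ($\epsilon=\tfrac{1}{2\sqrt N}$ in the paper versus $\delta=\tfrac{1}{4\sqrt N}$ for you). Your route is more self-contained---it relies only on Pinsker and the elementary KL estimate rather than an external Binomial tail lemma---and it sidesteps the need to identify the Bayes-optimal test explicitly. The paper's route is slightly more concrete in that it names the optimal estimator, but that extra structure is not used anywhere. One small point worth keeping in your write-up: your observation that actively-chosen $z$'s leak no information about $\Pcal$ (so at most $N$ genuine i.i.d.\ samples are ever observed, and adaptivity in when to sample only helps via monotonicity of TV in the number of samples) is exactly what is needed to cover the active-query oracle, and the paper glosses over this slightly; your version is cleaner there.
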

\begin{proof}
Let $\epsilon > 0$ and $p \sim \textrm{Uniform}\set{p_1,p_{-1}}$ where $p_1 = \frac{1+\epsilon}{2}$ and $p_{-1} = \frac{1-\epsilon}{2}$. Define $\Pcal_p$ as
\begin{equation}
\mathbb{P}_{\Pcal_p}\left[ Z = 1 \right] = 1 - \mathbb{P}_{\Pcal_p}\left[ Z = -1 \right] = p
\end{equation} 
Then, let $f(x;z) = zLx$, so $\mathbb{E}_{z\sim\Pcal_p}\left[ f(x;z) \right] = (2p-1)Lx$. When $p=p_1$, $(2p-1) > 0$ so the minimizer is $x = -B$ with value $-LB(2p-1) = -LB\epsilon$, and when $p=p_{-1}$, $(2p-1) < 0$ so the minimizer is $x = B$, also with value $-LB\epsilon$. Furthermore, if $p = p_1$ and $x \geq 0$ then it is at least $LB\epsilon$-suboptimal, and if $p = p_2$ and $x \leq 0$ then it is also at least $LB\epsilon$-suboptimal. 

Now consider any deterministic optimization algorithm which accesses the gradient or prox oracle $N$ times. Each gradient or prox oracle response can be simulated using a single $z \sim \Pcal_p$, so the algorithm's output is $\hat{x} = \hat{x}(z_1,\dots,z_N) \in [-B,B]$. Consider
\begin{multline}
\mathbb{E}_{p \sim \textrm{Uniform}\set{p_1,p_{-1}},z\sim\Pcal_p}\left[ (2p-1)L\hat{x}(z_1,\dots,z_N) \,\middle|\, z_1,\dots,z_N \right] \\
\geq LB\epsilon \mathbb{P}_{p \sim \textrm{Uniform}\set{p_1,p_{-1}},z\sim\Pcal_p}\left[ \textrm{sign}(\hat{x}(z_1,\dots,z_N)) \neq \textrm{sign}(2p-1)\,\middle|\, z_1,\dots,z_N \right]
\end{multline}
Furthermore, the Bayes optimal estimate $\hat{x}$ of $\textrm{sign}(2p-1)$ is
\begin{equation}
\hat{x}(z_1,\dots,z_N) =
\begin{cases}
1 & \frac{1}{N}\sum_{i=1}^N z_i \geq 0 \\
-1 & \frac{1}{N}\sum_{i=1}^N z_i < 0 
\end{cases}
\end{equation}
so
\begin{align}
&\mathbb{P}_{p \sim \textrm{Uniform}\set{p_1,p_{-1}},z\sim\Pcal_p}\left[ \textrm{sign}(\hat{X}(z_1,\dots,z_N)) \neq \textrm{sign}(2p-1)\,\middle|\, z_1,\dots,z_N \right] \nonumber\\
&\geq \mathbb{P}_{p \sim \textrm{Uniform}\set{p_1,p_{-1}},z\sim\Pcal_p}\left[ \abs{\frac{1}{N}\sum_{i=1}^N z_i - (2p-1)} \geq \epsilon \right] \\
&= \mathbb{P}_{z\sim\Pcal_{p_{-1}}}\left[ \abs{\frac{1}{N}\sum_{i=1}^N z_i - \epsilon} \geq \epsilon \right]
\end{align} 
This simply requires lower bounding the tail of the Binomial$\parens{N,\frac{1-\epsilon}{2}}$ distribution. Using Theorem 2.1 in \cite{slud1977distribution}, 
\begin{align}
\mathbb{P}_{z\sim\Pcal_{p_{-1}}}\left[ \abs{\frac{1}{N}\sum_{i=1}^N z_i - \epsilon} \geq \epsilon \right]
&\geq 1 - \Phi\parens{\frac{\epsilon N}{\sqrt{N(1+\epsilon)(1-\epsilon)}}} = 1 - \Phi\parens{\frac{\epsilon\sqrt{N}}{\sqrt{1-\epsilon^2}}}
\end{align}
where $\Phi$ is the distribution function of the standard normal. Let $\epsilon = \frac{1}{2\sqrt{N}}$, then 
$\frac{\epsilon\sqrt{N}}{\sqrt{1-\epsilon^2}} < \frac{3}{5}$ and
\begin{equation}
\mathbb{P}_{z\sim\Pcal_{p_{-1}}}\left[ \abs{\frac{1}{N}\sum_{i=1}^N z_i - \epsilon} \geq \epsilon \right]
\geq 1 - \Phi\parens{\frac{3}{5}} \geq \frac{1}{4}
\end{equation}
Therefore, we conclude that 
\begin{equation}
\mathbb{E}_{p \sim \textrm{Uniform}\set{p_1,p_{-1}},z\sim\Pcal_p}\left[ (2p-1)L\hat{x}(z_1,\dots,z_N) \,\middle|\, z_1,\dots,z_N \right] \geq \frac{LB\epsilon}{4} = \frac{LB}{8\sqrt{N}}
\end{equation}
Therefore, by Yao's minimax principle, for any randomized algorithm $\A$
\begin{equation}
\max_{p\in\set{p_1,p_{-1}}}\mathbb{E}_{\hat{X}\sim\A}\left[ \mathbb{E}_{z\sim\Pcal_p}\left[ f(\hat{X};z) \right] - \min_{x}\mathbb{E}_{z\sim\Pcal_p}\left[ f(x;z) \right] \right] \geq \frac{LB}{8\sqrt{N}}
\end{equation}
\end{proof}

\section{Supplement to Section \ref{sec:examples}}
\subsection{Smoothed accelerated mini-batch SGD}\label{sec:smoothedambsgd}
Smoothed accelerated mini-batch SGD is the composition of two ingredients. First, we approximate the non-smooth $f$ with a smooth surrogate, and then perform accelerated mini-batch SGD on the surrogate \citep{lan2012optimal,cotter2011better}. In particular, we use the $\beta$-Moreau envelope $f^{(\beta)}$ of $f$:
\begin{equation}
f^{(\beta)}(x;z) = \inf_{y} f(y;z) + \frac{\beta}{2}\norm{y-x}^2
\end{equation}
Since $f$ is $L$-Lipschitz, $f^{(\beta)}$ has the following properties (Proposition 12.29 \cite{bauschke2017convex}): 
\begin{enumerate}
\item $f^{(\beta)}$ is $\beta$-smooth
\item $\nabla f^{(\beta)}(x;z) = \beta(x - \textrm{prox}_{f(\cdot;z)}(x,\beta))$
\item $f^{(\beta)}(x;z) \leq f(x;z) \leq f^{(\beta)}(x;z) + \frac{L^2}{2\beta}$ for all $x$
\end{enumerate}
We use the prox oracle to execute \ambsgd on the $L$-Lipschitz and $\beta$-smooth $f^{(\beta)}$, with updates
\begin{align}
w_t &= \alpha y_t + (1-\alpha) x_t \\
y_{t+1} &= w_t - \frac{\eta}{M} \sum_{i=1}^M \beta\parens{w_t - \prox_{f(\cdot;z_i)}(w_t, \beta)} \\
x_{t+1} &= \alpha y_{t+1} + (1-\alpha)x_t
\end{align}
The \ambsgd algorithm will converge on $f^{(\beta)}$ at a rate (see \citep{lan2012optimal,cotter2011better})
\begin{equation}
\mathbb{E}\left[ f(x_T; z) \right] - \min_x \mathbb{E}\left[ f(x;z) \right] = O\parens{\min\set{\frac{L}{\sqrt{T}},\ \frac{\beta}{T^2}} + \frac{L}{\sqrt{MT}}}
\end{equation}
Choosing $\beta = \min\set{LT,H}$ the conclude
\begin{align}
\mathbb{E}\left[ f(x_T; z) \right] - \min_x \mathbb{E}\left[ f(x;z) \right]
&\leq 
\mathbb{E}\left[ f^{(\beta)}(x_T; z) \right] + \frac{L}{2T} - \min_x \mathbb{E}\left[ f^{(\beta)}(x;z) \right]  \\
&= O\parens{\min\set{\frac{L}{\sqrt{T}},\ \frac{\min\set{LT,H}}{T^2}} + \frac{L}{\sqrt{MT}} + \frac{L}{T}} \\
&= O\parens{\min\set{\frac{L}{T},\ \frac{H}{T^2}} + \frac{L}{\sqrt{MT}}}
\end{align}
which matches the lower bound in Theorem \ref{thm:proxLB}.

\subsection{Wait-and-collect accelerated mini-batch SGD}\label{sec:waitandcollect}
\begin{algorithm}[H]
  \caption{"Wait-and-collect" accelerated minibatch SGD}
  \label{alg:wait-and-collect}
  \begin{algorithmic}
    \STATE Initialize $\hat x = \tilde x = x_0=0,$, parameter $\alpha$.
    \FOR{$t=1,2,\dots,T$}
    \IF{$\mod(t,2\tau+1) \leq \tau$}
    \STATE Query stochastic gradient at $\tilde x.$
    \STATE Update $x_t \leftarrow x_{t-1}, \tilde g = 0.$
    \ELSIF{$\mod(t,2\tau+1) > \tau \; \text{\textbf{and}} \mod(t,2\tau+1) \leq 2\tau$}
    \STATE Update $x_t \leftarrow x_{t-1}.$
    \STATE Receive noisy gradient $g_{t-1-\tau}$, update $\tilde g \leftarrow \tilde g + (1/\tau)*g_{t-1-\tau}$
    \ELSIF{$\mod(t,2\tau+1) = 0$}
    \STATE Update $x_t \leftarrow \tilde x - \eta \tilde g$.
    \STATE Update $\hat x \leftarrow \alpha \hat x + (1-\alpha) x_t$, $\tilde x \leftarrow \alpha \hat x + (1-\alpha) x_t$.
    \ENDIF
    \ENDFOR
  \end{algorithmic}
\end{algorithm}

\subsection{Analysis of technical results in Section \ref{sec:intermittent}}
\label{sec:analysis_svrg}

\paragraph{Applying SVRG under intermittent synchronization graph} To apply SVRG method to solve stochastic convex optimization problems under intermittent synchronization graph. We adopt the approach by \citep{lee2017distributed,wang2017memory}, first we sample $n$ instances $\{z_1,...,z_n\}$ and solve a regularized empirical risk minimization problem based on $\{z_1,...,z_n\}$:
\begin{align}
\min_{x} \hat F_{\lambda}(x) := \frac{1}{n} \sum_{i=1}^n f(x;z_i) + \frac{\lambda}{2} \norm{x}^2,  
\label{eqn:erm}
\end{align}
where $\lambda$ is the regularization parameter will specified later. We will apply SVRG algorithm on the intermittent synchronization graph to solve above empirical objective \eqref{eqn:erm} to certain sub-optimality. The SVRG method works in stages, at each stage, we first use $n/KM$ communication rounds to calculate the full gradient of \eqref{eqn:erm} at a reference point $\tilde x$, and then using a single chain to perform stochastic gradient updates, equipped with $\nabla \hat F(\tilde w)$ to reduce the variance. We choose $\lambda \asymp L/(\sqrt{n} B)$, which will makes the objective \eqref{eqn:erm} to be at least $L/(\sqrt{n} B)$-strongly convex, thus the condition number of \eqref{eqn:erm} will be bounded by $O(H/(L/(\sqrt{n} B))) = O(H\sqrt{n}B/L)$. The SVRG analysis \citep{johnson13svrg} requires the number of stochastic gradient updates to be scales as the condition number, so here we will use $O(H\sqrt{n}B/(LK))$ communication rounds to perform the stochastic updates, since one chain within each communication round has length $K$. Let $\hat x^* = \arg\min_{x} \hat F_{\lambda}(x)$, and let $\hat x_s$ to be the iterate after running the SVRG algorithm for $s$-stages. By the standard results of SVRG (Theorem 1 in \citep{johnson13svrg}), we have
\[
\E{ \hat F_{\lambda}(\hat x_s) } - \hat F_{\lambda}(\hat x^*) \leq \left( \frac{1}{2} \right)^s.
\]
By standard estimation-optimization error decomposition (e.g. Section 4 in \citep{shalev2008svm}), we have
\begin{align}
\E{F(\hat x_s)} - F(x^*) \leq& 2 \E{ \hat F_{\lambda}(\hat x_s)  - \hat F_{\lambda}(\hat x^*) } + \frac{\lambda B^2}{2} + O \left( \frac{L^2}{\lambda n} \right) \nonumber \\
\leq& \left( \frac{1}{2} \right)^s + O \left( \frac{LB}{\sqrt{n}} \right) = O \left( \frac{LB}{\sqrt{n}} \right),
\label{eqn:svrg_opt}
\end{align}
given $s \asymp \log (n/(LB))$. Thus to implement SVRG successfully, we need to choose $n$ such that the following two conditions are satisfied:
\begin{align*}
\frac{n}{KM}*s \leq T, \quad \text{and} \quad \frac{H\sqrt{n}B}{LK}*s \leq T.
\end{align*}
Thus we know by choosing $n$ below will satisfy above condition:
\[
n \asymp \min \left\{ \frac{K^2 T^2 L^2}{H^2 B^2 \log^2(MKT/L)}, \frac{M K T}{\log(MKT/L)} \right\},
\]
substitute the scale of $n$ to \eqref{eqn:svrg_opt} we get
\begin{align*}
\E{F(\hat x_s)} - F(x^*) \leq& O \left( \frac{HB^2}{KT} \log \left( \frac{MKT}{L} \right) + \frac{LB}{\sqrt{MKT}} \left( \log \left( \frac{MKT}{L} \right) \right)^{1/2} \right) \\
\leq& O \left( \left( \frac{HB^2}{KT}  + \frac{LB}{\sqrt{MKT}} \right) \log \left( \frac{MKT}{L} \right) \right) , 
\end{align*}
and we obtain the desired result.

\if 0
\section{Proof of technical results in Section \ref{sec:variance}}
\label{sec:proof_sigma}

\begin{theorem}
Consider the following sequential proximal iterations:
\[
x_t \leftarrow \arg\min_{x} f(x;z_t) + \frac{1}{2\eta} \norm{x - x_{t-1}}^2, 
\]
if we run above updates for $T$ iterations, and let $\bar x_T = \frac{1}{T} \sum_{t=1}^T x_t$. By choosing $\eta = \frac{B}{2 \bar \sigma \sqrt{T}}$, we have the following upper bound on sup-optimality of $\E{F(\bar x_T)}$:
\[
\E{F(\bar x_T)} - F(x^*) \leq \frac{2 \bar \sigma B}{\sqrt{T}}.
\]
\label{thm:sigma}
\end{theorem}

Before the proof of Theorem \ref{thm:sigma}, we first state a lemma which will be useful for proving Theorem \ref{thm:sigma}.

\begin{lemma}
Consider the following sequential proximal iterations:
\[
x_t \leftarrow \arg\min_{x} f(x;z_t) + \frac{1}{2\eta} \norm{x - x_{t-1}}^2, 
\]
we have
\[
F(x_t) - \E{ f(x_t;z_t) } \leq 2 \eta \bar \sigma^2
\]
\label{lemma:var}
\end{lemma}
\begin{proof}
Denote 
\[
x'_t = \arg\min_{x} f(x;z'_t) + \frac{1}{2\eta} \norm{x - x_{t-1}}^2
\]
be the iterate operated on another independent sample $z'_t$. Let $F_t(x) = f(x;z_t) + \frac{1}{2\eta} \norm{x - x_{t-1}}^2$, $F'_t(x) = f(x;z'_t) + \frac{1}{2\eta} \norm{x - x_{t-1}}^2$. First we observe that
\begin{align*}
F_t(x'_t) - F_t(x_t) =& f(x'_t;z_t) - f(x_t;z_t) + f(x_t;z'_t) - f(x'_t;z'_t) + F'_t(x'_t) - F'_t(x_t)  \\
\leq& f(x'_t;z_t) - f(x_t;z_t) + f(x_t;z'_t) - f(x'_t;z'_t) - \frac{1}{2 \eta} \norm{x'_t - x_t}^2,
\end{align*}
and on the other hand
\[
F_t(x'_t) - F_t(x_t) \geq \frac{1}{2 \eta} \norm{x'_t - x_t}^2.
\]
Combining above two inequalities we get
\begin{align*}
\frac{1}{\eta} \norm{x'_t - x_t}^2 \leq f(x'_t;z_t) - f(x_t;z_t) + f(x_t;z'_t) - f(x'_t;z'_t).
\end{align*}
Denote $f_{\Delta}(x) = f(x;z_t) - f(x;z'_t)$, we get
\begin{align}
\frac{1}{\eta} \norm{x'_t - x_t}^2 \leq f_{\Delta}(x'_t) - f_{\Delta}(x_t) \leq \norm{\nabla f_{\Delta}(\tilde x_t)} \cdot \norm{x'_t - x_t},
\label{eqn:ineq}
\end{align}
where the last inequality is due to mean value theorem, and $\tilde x_t$ is a vector depends on $x_t$ and $x'_t$, thus by \eqref{eqn:ineq} we have $\norm{x'_t - x_t} \leq \eta \norm{\nabla f_{\Delta}(\tilde x_t)}$. Thus $f_{\Delta}(x'_t) - f_{\Delta}(x_t) \leq \eta \norm{\nabla f_{\Delta}(\tilde x_t)}^2.$
Taking expectation with respect to both $z_t$ and $z'_t$ we get 
\[
F(x_t) - \E{ f(x_t;z_t) } = \frac{1}{2} \E{ f_{\Delta}(x'_t) - f_{\Delta}(x_t) } \leq \frac{\eta}{2} \E{ \norm{\nabla f_{\Delta}(\tilde x_t)}^2},
\]
also noted that
\begin{align*}
\E{ \norm{\nabla f_{\Delta}(\tilde x_t)}^2}  =& \E{ \norm{\nabla f(\tilde x_t;z_t) - \nabla f(\tilde x_t;z'_t) }}^2 \\
\leq& 2 \E { \norm{\nabla f(\tilde x_t;z_t) - \nabla F(\tilde x_t) }^2 + \norm{\nabla f(\tilde x_t;z'_t) - \nabla F(\tilde x_t) }^2 } \\
\leq& 4 \bar \sigma^2.
\end{align*}
Combining above we obtain $F(x_t) - \E{ f(x_t;z_t) } \leq 2 \eta \bar \sigma^2$, which concludes the proof.
\end{proof}

\paragraph{Proof of Theorem \ref{thm:sigma}}
\begin{proof}
The proof is an extension of the techniques in \citep{wang2017memory}. First by decomposition
\[
\E { F(x_t) - F(x^*)} = \E { F(x_t) - f(x_t;z_t)} - \E{ f(x_t;z_t) - F(x^*)},
\]
Since $\E { f(x_t;z_t) - F(x^*)} = \E { f(x_t;z_t) - f(x^*;z_t)}$, while since $F_t(x)$ is a $\frac{1}{\eta}$-strongly convex function and $x_t$ is its minimizer, we have $f(x_t;z_t) - f(x^*;z_t) \leq \frac{1}{2 \eta} \norm{x_{t-1} - x^*}^2 - \frac{1}{2\eta} \norm{x_{t} - x^*}^2 - \frac{1}{2\eta}\norm{x_{t-1} - x_t}^2$. Thus
\[
\E { f(x_t;z_t) - f(x^*;z_t)} \leq \frac{1}{2 \eta} \left(\norm{x_{t-1} - x^*}^2 - \norm{x_{t} - x^*}^2 \right).
\]
Combining with Lemma \ref{lemma:var}, and summing from $1,...,T$ we get
\[
\sum_{t=1}^T \E{ F(x_t) - F(x*)} \leq 2 \eta T\bar \sigma^2 + \frac{ \norm{x_0 - x^*}^2}{2 \eta} \leq 2 \eta T\bar \sigma^2 + \frac{B^2}{2 \eta},
\]
by choosing $\eta = \frac{B}{2 \bar \sigma}\sqrt{T}$, and divide both sides by $T$, we get
\[
\E { F(\bar x_T) - F(x^*)} \leq \frac{2 \bar \sigma B}{\sqrt{T}},
\]
which concludes the proof.
\end{proof}
\fi

\end{document}